\def\sfs{\mathscr{s}}
\renewcommand\theequation{\thesection.\arabic{equation}}
\newcommand{\BA}{{\mathbb {A}}}
\newcommand{\BC}{{\mathbb {C}}}
\newcommand{\BF}{{\mathbb {F}}}
\newcommand{\BH}{{\mathbb {H}}}
\newcommand{\BN}{{\mathbb {N}}}
\newcommand{\BQ}{{\mathbb {Q}}}
\newcommand{\BZ}{{\mathbb {Z}}}
\newcommand{\CB}{{\mathcal {B}}}
\renewcommand{\CD}{{\mathcal {D}}}
\newcommand{\CE}{{\mathcal {E}}}
\newcommand{\CF}{{\mathcal {F}}}
\newcommand{\CJ}{{\mathcal {J}}}
\newcommand{\CK}{{\mathcal {K}}}
\newcommand{\CL}{{\mathcal {L}}}
\newcommand{\CO}{{\mathcal {O}}}
\newcommand{\CP}{{\mathcal {P}}}
\newcommand{\CS}{{\mathcal {S}}}
\newcommand{\CT}{{\mathcal {T}}}
\newcommand{\CU}{{\mathcal {U}}}
\newcommand{\Fe}{{\mathfrak {e}}}
\newcommand{\Ff}{{\mathfrak {f}}}
\newcommand{\Fp}{{\mathfrak {p}}}
\newcommand{\RE}{{\mathrm {E}}}
\newcommand{\RF}{{\mathrm {F}}}
\newcommand{\RG}{{\mathrm {G}}}
\newcommand{\RU}{{\mathrm {U}}}
\newcommand{\Ad}{{\mathrm{Ad}}}
\newcommand{\Asai}{{\mathrm{As}}}
\newcommand{\End}{{\mathrm{End}}}
\newcommand{\Gal}{{\mathrm{Gal}}}
\newcommand{\GL}{{\mathrm{GL}}}
\newcommand{\Hom}{{\mathrm{Hom}}}
\newcommand{\Ind}{{\mathrm{Ind}}}
\newcommand{\Ker}{{\mathrm{Ker}}}
\newcommand{\reg}{{\mathrm{reg}}}
\newcommand{\SL}{{\mathrm{SL}}}
\newcommand{\Sp}{{\mathrm{Sp}}}
\def\bartau{\bar{\tau}}
\def\sfSp{{\sf Sp}}
\newtheorem{thm}{Theorem}[section]
\newtheorem{cor}[thm]{Corollary}
\newtheorem{lem}[thm]{Lemma}
\newtheorem{prop}[thm]{Proposition}
\newtheorem {conj}[thm]{Conjecture}
\newtheorem {ques/conj}[thm]{Question/Conjecture}
\newtheorem{defn}[thm]{Definition}
\newtheorem{rmk}[thm]{Remark}
\providecommand{\iddots}{\scalebox{-1}[1]{$\ddots$}}
\providecommand{\iddots}{\rotatebox[c]{180}{$\ddots$}}
\newcommand{\Irr}{{\rm Irr}}
\let\ssout\sout
\def\sout#1{{\color{red}\ssout{#1}}}
\def\cc{c}
\def\bF{\mathbb{F}}
\DeclareMathOperator{\cInd}{c-Ind}
\def\half{\frac{1}{2}}
\def\barsigma{\bar{\sigma}}
\def\bC{{\mathbb C}}
\def\sfH{{\sf H}}
\def\sfM{{\sf M}}
\def\sfS{{\sf S}}
\def\sfT{{\sf T}}
\def\Femo{\Fe_m^{\circ}}
\def\Femoo{\Fe_m^{\circ,1}}
\def\SCEFm{\mathrm{SC}_0(\GL_m(\RE)/\GL_m(\RF))}
\def\DD{{\mathscr{D}}}
\def\inn#1#2{\left\langle
      \def\ta{#1}\def\tb{#2}
      \ifx\ta\@empty{\;} \else {\ta}\fi ,
      \ifx\tb\@empty{\;} \else {\tb}\fi
      \right\rangle}
\begin{document}
\renewcommand{\theequation}{\arabic{equation}}
\numberwithin{equation}{section}

\title[Fourier-Jacobi local descent]{Fourier-Jacobi models of Deligne-Lusztig characters and depth zero local descent for unitary groups}

\date{\today}

\author[Dongwen Liu]{Dongwen Liu}

\address{School of Mathematical Sciences, Zhejiang University, Hangzhou 310058, Zhejiang, P.R. China}

\email{maliu@zju.edu.cn}

\author[Jiajun Ma]{Jiajun Ma}

\address{School of Mathematical Sciences, Xiamen University, Xiamen, P. R. China}
\address{Department of Mathematics, Xiamen University Malaysia Campus, Malaysia}

\email{jiajun.ma@xmu.edu.my}

\long\def\delete#1{\relax}

\author[Fang Shi]{Fang Shi}

\address{School of Mathematical Sciences, Zhejiang University, Hangzhou 310058, Zhejiang, P.R. China}

\email{11935007@zju.edu.cn}

\subjclass[2010]{Primary 20C33; Secondary 22E50}

\begin{abstract}
In this paper, we deduce explicit multiplicity formulas of the Fourier-Jacobi model for Deligne-Lusztig characters of finite symplectic groups, unitary groups, and general linear groups.
We then apply these results to 
deduce the explicit depth zero local descent (\`a la Soudry and Tanay) for $p$-adic unitary groups.
The result is a concrete example in the context of non-tempered Gan-Gross-Prasad program. 
\end{abstract}

\maketitle

\section{Introduction}

This paper was motivated by the celebrated Gan-Gross-Prasad conjectures (\cite{GGP1, GGP2}) in the Fourier-Jacobi case, which study certain branching laws for representations of classical groups over finite fields, local fields, or adele rings of global fields. We  have  a two-fold purpose in this paper.

We first determine the multiplicity of the Fourier-Jacobi model for two Deligne-Lusztig characters of a finite symplectic, unitary, or general linear group.
The Bessel case was settled by Reeder \cite{R}, who in fact made a qualitative study for the restrictions of Deligne-Lusztig characters in a general setting. Since the characters of Weil representations are not uniform functions, Reeder's approach does not apply to our situation directly. The new input is the geometrization of the character of Weil representations \cite{GH}. As a consequence, the multiplicity formula has to be viewed as a function of {\it geometric type} (see \Cref{geotype}). % Similar to Reeder's case, our theory might suggest a general approach for the study of   

%In contrast to \cite{R}, where the multiplicity is a rational function of, a new difficulty in our case is that the Weil representation $\omega_\psi$ in general  is  not uniform. To overcome this difficulty, we introduce a notion of functions of {\it geometric type} in Defintion \ref{geotype}, and make a simple application of  the geometrization of Weil representations realized in \cite{GH} in terms of perverse sheaves and faisceaux-fonction correspondence.
%Although it should be possible to find a more elementary approach using the explicit character formula for Weil representations (see \cite{G, Ho2, T}), the current approach using geometric tools seems more efficient. 

Secondly,  we study the local descent of an irreducible depth zero distinguished supercuspidal representation of $\GL_m(\RE)$ to the quasi-split unitary group $\RU_{2n}(\RF)$ in the sense of \cite{ST}, where 
$\RE/\RF$ is a quadratic extension of $p$-adic local fields, and $n:=\lfloor \frac{m+1}{2}\rfloor$. This local descent problem is a special case of the non-tempered local Gan-Gross-Prasad conjecture proposed in \cite{GGP2}.
The non-tempered local GGP conjecture is recently proved for general linear groups over $p$-adic local fields in \cite{Ch} and remains open in general. 
Supercuspidal types encode arithmetic data, rendering them more amenable in number-theoretical applications. Their relationship with the local Langlands correspondence has been deciphered in many cases \cite{BH, Ka}, covering all representations pertinent to our setting.
%Our results are formulated in terms of depth zero supercuspidal types.
%Supercuspidal types are fundamental for the explicit description of local Langlands correspondence and are more amendable in number-theoretical applications. 
%and their explicit relations between L-parameter and gamma factors to be determined. 

\subsection{Overview of Reeder's work and motivation}
Let $G$ be a connected simple algebric group and $H$ be a closed reductive subgroup of $G$ defined over $\Ff := \bF_q$.  
Let $F$ denote the Frobenius automorphism so that $G^F$ is the $\Ff$-points of $G$. 
Reeder's method \cite{R} for computing the pairing 
\[
\langle R^G_{T,\chi}, R^H_{S,\eta} \rangle_{H^F}
\]
of two Deligne-Luszting characters (as class functions) can be outlined in the following steps:
\begin{enumerate}[label=\arabic*., wide]
\item Interpret $R^G_{T,\chi}$ and $R^H_{S,\eta}$ as  sequences of the Deligne-Lusztig characters of $G^{F^\nu}$ and $H^{F^\nu}$ respectively, where $\nu$ varies in a certain arithmetic progression.
\item Demonstrate that 
the pairing  
$ \langle R^G_{T,\chi}, R^H_{S,\eta} \rangle_{H^{F^\nu}}$
is a polynomial function $M$ in $q^\nu$, see \Cref{sec:reeder}.
\item Show that the degree of $M$ measures the relative complexity between the groups $G$ and $H$, and the leading coefficient of $M$ can be calculated in terms of combinatorial data.
\end{enumerate}
Under specific conditions, the polynomial $M$ is reduced to a constant. This property facilitated Reeder's application of his method to the Gross-Prasad problem, thereby enabling him to derive a multiplicity formula for the Bessel models of finite special orthogonal groups.
%Under particular conditions, the polynomial $M$ becomes a constant. 
%This property enables Reeder to apply his method to the Gross-Prasad problem and  establish a multiplicity formula for the Bessel models of finite special orthogonal groups.

Inspired by Reeder's work, we formulate a framework that incorporates the Weil representation in the character pairing and investigates the Fourier-Jacobi case of the Gan-Gross-Prasad conjecture. It is worth mentioning that this framework also applies to more generalized models of spherical varieties (as seen in \cite{SV}) over finite fields, which plays a crucial role in \cite{Shi}.
 
\delete{
Let us give a sketch of the approach in \cite{R}. The goal there was to compute the multiplicity 
\[
\langle R^G_{T,\chi}, R^H_{S,\eta} \rangle_{H^F}
\]
for two Deligne-Luszting characters in the classical situation of the Gross-Prasad conjecture, that is, $H\subset G$
are special orthogonal groups defined over a finite field $\mathbb{F}_q$ and $F$ is the Frobenius automorphism. Reeder developed a qualitative study in a general setting where $G$ is connected and simple, and $H$ is a closed connected
reductive subgroup. His method was to use the explicit character formula for Deligne-Lusztig characters, with $F$ replaced by its powers $F^\nu$ where $\nu$ varies in a certain arithmetic progression. The corresponding multiplicity 
$M(q^\nu)$ is a polynomial function of $q^\nu$ whose degree measures certain complexity for the pair $(G, H)$. A general complicated expression for the leading term was given. In the application for the Gross-Prasad situation, the complexity vanishes and the polynomial is a constant, which simplifies the leading term and gives a multiplicity formula for the Bessel models of finite special orthogonal groups. More details for the general multiplicity formula will be explained in Section \ref{sec:reeder}.

The general theory appears to be interesting in its own right. Similar to Reeder's work, in this paper we consider the Fourier-Jacobi situation of the Gan-Gross-Prasad conjecture. These works might provide a framework for the study of more general models of spherical varieties (see \cite{SV}) over finite fields.
}

\subsection{Fourier-Jacobi model for finite classical groups}
Let $\Ff$ be a finite field of characteristic $p>2$ and cardinality $q$, along with a fixed algebraic closure $\bar\Ff$.
For a subfield $\Fe$ contained in  $\bar\Ff$, designate $\Fe_d$ ($d\geq 1$) to be the degree $d$ extension of $\Fe$ in $\bar\Ff$. Let
\[
\Fe^\circ := \Set{a\in \Fe | \text{$a$ is not contained in a proper subfield of $\Fe$} }
\]
be the set of ``regular elements'' of $\Fe$. 
When $\Fe$ is an even degree extension of $\bF_p$, let 
\[
%\Fe^1:= \set{a\in \Fe | a^{\frac{\sqrt{|\Fe|}+1}{2}}=1}
\Fe^1:= \set{a\in \Fe | a^{\sqrt{|\Fe|}+1}=1}
\] 
denote the set of ``norm one'' elements.

Let $G=\Sp_{2n}$, $\RU_n$ or $\GL_n$ be defined over $\Ff$ and let $F$ be the Frobenius endomorphism on $G$. 
%, with Frobenius endomorphism $F$ and 
%the group $G^F$ of $\Ff$-rational points. 
For simplicity, we focus on the basic case of the Fourier-Jacobi model of a $G$-module (see \cite[Sections 12 and 19]{GGP1}) in this introduction.  The  general setting, for which the main reference is \cite[Section 12]{GGP1}, will be addressed in  Section \ref{ssec-UG} and Section 
\ref{ssec-LDGGP} for the finite field and $p$-adic field cases, respectively. 

In the basic case, we are interested in the pairing
\begin{equation}  \label{FJ-mult} 
m(\pi,\sigma) := \braket{\pi\otimes \omega_\psi^\vee, \sigma}_{G^F}.
\end{equation}
Here $\pi,\sigma$ are  (virtual) characters of $G^F$ and  $\omega_\psi$ denotes the Weil representation of $G^F$ attached to a non-trivial additive character $\psi$ of $\Ff$ in the sense of \cite{Ho1}. Note that  $m(\pi,\sigma)$ is the multiplicity of $\sigma$ occurring in $\pi \otimes \omega_\psi^\vee$ when $\pi$ and $\sigma$ are irreducible representations of $G^F$.
%The last expression suggests that this multiplicity can be defined for any virtual characters $\pi$ and $\sigma$ of $G^F$, in which case 
%$\langle \pi\otimes \omega_\psi^\vee, \sigma\rangle_{G^F}$ is given by the standard inner product between  class functions on $G^F$. 
Our first main result, \Cref{MF}, evaluates the pairing \eqref{FJ-mult} for arbitrary Deligne-Lusztig characters (\cite{DL}) $\pi=R^G_{T,\chi}$ and $\sigma = R^G_{S,\eta}$,  
where $T$ and $S$ are $F$-stable maximal tori in $G$, and $\chi\in \Irr(T^F)$, $\eta\in \Irr(S^F)$. When $\chi$ and $\eta$ are {\it regular} in the sense that their 
stabilizers in the corresponding Weyl groups $W_G(T)^F$ and $W_G(S)^F$ are trivial, a straightforward multiplicity formula \eqref{reg-MF} is provided in \Cref{ssec-RC}  (c.f.  \cite[(9.9)]{R}).

\medskip 

Now let $G=\Sp_{2n}$ or $\RU_n$, and we evaluate \eqref{reg-MF} %\Cref{MF}
in a very special but important scenario.
Assume that $T, S$ are anisotropic and $\chi, \eta$ are regular. So $(-1)^{{\rm rk}\, G}R^G_{T,\chi}$ and $(-1)^ {{\rm rk} \, G}R^G_{S,\eta}$ are irreducible cuspidal representations of $G^F$, where ${\rm rk}\, G$ denotes the $\Ff$-rank of $G$ (see \cite[Theorem~8.3]{DL}). There are unique partitions $\lambda=(j^{\lambda_j})$ and $\mu=(j^{\mu_j})$ of $n$ such that
\[
T^F\cong \prod_j (\Ff_{2j}^1)^{\lambda_j}, \quad S^F\cong \prod_j (\Ff_{2j}^1)^{\mu_j}.
\]
The characters $\chi$ and $\eta$ decompose into  
\[
\chi =\bigotimes_j \chi_{j1}\otimes\cdots\otimes \chi_{j\lambda_j} \quad  \text{and} \quad \eta= \bigotimes_j \eta_{j1}\otimes\cdots\otimes \eta_{j\mu_j}
\]
accordingly with $\lambda_{jk}, \mu_{jl}\in \Irr(\Ff_{2j}^1)$. 

\begin{defn}
Let $\vartheta_j' \in \Irr(\Ff_{2j}^1)$ denote the unique nontrivial quadratic character. 
We say that $\chi$ and $\eta$ intertwine if $\chi_{jk}$ is a $\Gal(\bar\Ff/\Ff)$-conjugate of $\eta_{jl}\otimes \vartheta_j'$ for some $1\leq j\leq n$, $1\leq k\leq \lambda_j$ and $1\leq l\leq \mu_j$.
\end{defn}

Then we have the following result, which is in analogy with \cite[Theorem 1.2]{R}.

\begin{thm}
Suppose that $G=\Sp_{2n}$ or $\RU_n$, and that $T$ and $S$ are anisotropic $F$-stable maximal tori in $G$.
For regular characters $\chi\in \Irr(T^F)$, $\eta\in \Irr(S^F)$, we have
\[
\langle R^G_{T,\chi}\otimes \omega_\psi^\vee, R^G_{S,\eta}\rangle_{G^F} = \begin{cases}
0, & \textrm{if }\chi, \eta \textrm{ intertwine,} \\
1, & \textrm{otherwise.}
\end{cases}
\]
\end{thm}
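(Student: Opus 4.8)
The plan is to compute the bracket directly from characters; alternatively, it is the anisotropic-torus, regular-character specialization of the general multiplicity formula \Cref{MF}, and one may read it off from there.

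\emph{Reduction to an honest multiplicity.} As recalled above, regularity of $\chi,\eta$ together with anisotropy of $T,S$ makes $\pi:=(-1)^{\mathrm{rk}\,G}R^G_{T,\chi}$ and $\sigma:=(-1)^{\mathrm{rk}\,G}R^G_{S,\eta}$ irreducible cuspidal representations of $G^F$. The two occurrences of $(-1)^{\mathrm{rk}\,G}$ cancel in the bracket, so the quantity we want equals the honest multiplicity $\dim\Hom_{G^F}(\pi\otimes\omega_\psi^\vee,\sigma)$, a nonnegative integer; it remains to see that it is $0$ if $\chi,\eta$ intertwine and $1$ otherwise. First I would unfold it as the inner product of class functions
\[
m=\frac{1}{|G^F|}\sum_{g\in G^F}R^G_{T,\chi}(g)\,\overline{\omega_\psi(g)}\,\overline{R^G_{S,\eta}(g)}.
\]

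\emph{The character computation.} Into this I would substitute: (i) the Deligne-Lusztig character formula for $R^G_{T,\chi}$ and $R^G_{S,\eta}$ — because $\chi,\eta$ are regular this is essentially explicit, the Weyl-group sum contributing at most one term per semisimple part, with Green functions of the relevant centralizers entering only at non-regular-semisimple elements; and (ii) the explicit character of the Weil representation $\omega_\psi$, whose value at $g$ is $q^{\dim\ker(g-1)/2}$ times a root of unity given by a quadratic Weil index (a normalized Gauss sum) attached to $g$. I would then carry out the sum over $g$ by grouping conjugacy classes by semisimple part and summing first over the semisimple elements that are $G^F$-conjugate into $T^F$ (where $R^G_{T,\chi}$ does not vanish), using orthogonality of characters on the tori and the explicit Green functions of the centralizers that arise. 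What survives is a sum of products of quadratic Gauss sums indexed by the combinatorial data of the two pairs — the partitions $\lambda,\mu$ together with the characters $\chi_{jk},\eta_{jl}$ of the norm-one factors $\Ff_{2j}^1$.

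\emph{Extracting the dichotomy, and the main obstacle.} Evaluating those Gauss sums is the crux: on each factor $\Ff_{2j}^1$ the pertinent Weil index is precisely the nontrivial quadratic character $\vartheta_j'$, and a Gauss sum collapses to $0$ exactly when, after this $\vartheta_j'$-twist, some $\chi_{jk}$ becomes $\Gal(\bar\Ff/\Ff)$-conjugate to some $\eta_{jl}$ — which is exactly the intertwining condition. Off the intertwining locus the remaining sum telescopes to $1$, the regularity of $\chi,\eta$ (triviality of their stabilizers in $W_G(T)^F$ and $W_G(S)^F$) being what prevents a larger contribution; this gives $m=1$. The main obstacle I anticipate is precisely the explicit evaluation of the Weil-representation character on \emph{arbitrary}, not merely semisimple, elements of $G^F$, together with the ensuing Gauss-sum bookkeeping over the anisotropic tori $\prod_j(\Ff_{2j}^1)^{\lambda_j}$ — both identifying $\vartheta_j'$ as the Weil index and pinning down exactly when the Gauss sum degenerates. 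Granting that character identity, the remainder is standard Deligne-Lusztig input: the character formula, orthogonality of Deligne-Lusztig characters, and the classification of anisotropic maximal tori of $\Sp_{2n}$ and $\RU_n$.
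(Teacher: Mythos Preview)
Your parenthetical remark --- that the theorem is the anisotropic, regular-character specialization of Theorem~\ref{MF} --- is exactly how the paper obtains it: the explicit formula \eqref{reg-MF} in Section~\ref{ssec-RC} gives $e_{T,S}\langle R^G_{T,\chi}\otimes\omega_\psi^\vee, R^G_{S,\eta}\rangle_{G^F} = 2^r$ if all $I_j'=\varnothing$ and $0$ otherwise; when $T,S$ are anisotropic and $G=\Sp_{2n}$ or $\RU_n$ there are no $\Ff_j^\times$-factors, so $r=\sum_j|I_j|=0$ and $e_{T,S}=1$, while $I_j'\neq\varnothing$ for some $j$ is precisely the intertwining condition.

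The bulk of your proposal, however --- the direct character-sum approach --- is a genuinely different route and, as you yourself flag, is left incomplete. The obstacle you name (evaluating $\omega_\psi$ on non-semisimple elements and organizing the resulting sums) is real, and ``granting that character identity'' is exactly where the work lies. Your description of the Weil character as $q^{\dim\ker(g-1)/2}$ times a single Weil index, collapsing to Gauss sums on the tori, is too coarse: already on semisimple elements the sign in \eqref{char} is $(-1)^{l(T^F,s)}\vartheta_T(s)$, which depends on the torus decomposition rather than a single quadratic index, and there is no evident mechanism in your sketch making the unipotent contributions cancel directly.

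The paper's proof of Theorem~\ref{MF} sidesteps precisely this difficulty. Rather than evaluating $M(1)$ head-on, it forms the family $M(\nu)$ over $\Ff_\nu$-points, uses the geometrization of $\omega_\psi$ (Theorem~\ref{geo-weil}) together with the Grothendieck--Lefschetz trace formula to show that $M(\nu)$ is a function of geometric type, and then proves $M(\nu)$ converges as $\nu\to\infty$ along a suitable arithmetic progression --- forcing it to be constant (Lemma~\ref{const}). The limit only sees the dominant contribution from regular semisimple elements with $u=1$, where the clean formula \eqref{char} applies; the unipotent and non-regular terms are suppressed asymptotically by the estimates \eqref{est-weil}--\eqref{est-theta}. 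The final $0/1$ dichotomy then comes from the elementary binomial identity $\sum_{\nu'}(-1)^{\nu'}\binom{|I_j'|}{\nu'}=(1-1)^{|I_j'|}$, not from a Gauss-sum evaluation. If you wish to pursue the direct approach, you would need to replace this limit argument with an honest finite computation handling all Jordan types --- the paper explicitly remarks that this may be possible via the explicit Weil character formulas, but opts for the geometric method as more efficient.
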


\medskip
\subsection{Remarks on \Cref{MF}}
In contrast to the $p$-adic case where the multiplicity one theorem holds (\cite{Sun}),  the multiplicity $m(\pi, \sigma)$ for general $\pi, \sigma\in \Irr(G^F)$ could be greater than one in the finite field case by \Cref{MF} or \eqref{reg-MF}.
As an application of \Cref{MF}, for $G=\RU_n$ or $\GL_n$ we give a quick proof of a conjecture in \cite{HS} which asserts that $m(\pi,\sigma)$ is bounded by a function of $n$ that does not depend on $q=|\Ff|$. This is stated as \Cref{HS-A}, where we do not attempt to optimize the upper bound however. There has been a good deal of interest and progress in the topic of Weil representations of finite classical groups (see e.g. \cite{GHo, HS, HZ}), and the result given by \Cref{MF} should have applications towards the relevant problems. 

%Our proof of Theorem \ref{MF} follows the strategy  in \cite{R}, which uses the explicit character formula to study the restriction of Deligne-Lusztig characters. The rough idea is to

Let us sketch the proof of \Cref{MF}. Inspired by \cite{R}, we extend the pairing \eqref{FJ-mult} to the group $G^{F^\nu}$ of $\Ff_\nu$-points of $G$ and regard its value as a function $M(\nu)$ of $\nu$. As $\nu$ ranges over an arithmetic progression $\CP_m := \set{1+md : d\geq 0}$ with the common difference  $m$ being sufficiently  divisible, the Frobenius endomorphisms $F^\nu$ act compatibly on certain geometric objects. This ensures that the function $M(\nu)$ has some nice form and converges as $\nu\to\infty$ along $\CP_m$.
This, in turn, forces that $M(\nu)$ is a constant on $\CP_m$ and thereby yields the desired multiplicity formula. 

%In contrast to \cite{R}, where $M(\nu)$ is a rational function of $q^\nu$, a new difficulty in our case is that the Weil representation $\omega_\psi$ in general  is  not uniform. To overcome this difficulty, we introduce a notion of functions of {\it geometric type} in Defintion \ref{geotype}, and make a simple application of  the geometrization of Weil representations realized in \cite{GH} in terms of perverse sheaves and faisceaux-fonction correspondence. Although it should be possible to  find a more elementary approach using the explicit character formula for Weil representations (see \cite{G, Ho2, T}), the current approach using geometric tools seems more efficient. 

%We remark that it may be also possible to establish a multiplicity formula for \eqref{FJ-mult} by applying the work \cite{R} for the Bessel models, together with the method of theta correspondence and see-saw dual pairs over finite fields.
The theta correspondence and see-saw dual pairs might be used to translate the result on Bessel models \cite{R} to the Fourier-Jacobi case. 
Indeed, such an approach has been employed to study the Gan-Gross-Prasad problems and descent problems over finite fields in \cite{LW1, LW3, LW4, W}. 
Nonetheless, the theta correspondence is an intricate subject (cf. \cite{AMR, LW2, MQZ, P2, P3}). 
Thus, it becomes preferable to provide a more direct and independent proof of the Fourier-Jacobi case.
This is one of the motivations for the current work.
%This serves as the impetus behind the present endeavor.
%So it is desirable to give a more direct and self-contained proof of the Fourier-Jacobi case. This motivated the current work. 
%for the Fourier-Jacobi case, which should be more amenable for the application to the $p$-adic setting.

\subsection{The descent of depth zero supercuspidal representations}
% , the unitary groups case}

The local descent construction (cf. \cite{JS, JNQ, ST})  serves as the inverse of the %involves the inverse map of the
the local Langlands functorial lift from classical groups to general linear groups.
This construction is naturally tied with the non-tempered Gan-Gross-Prasad conjecture (\cite{GGP2}). 
The descent construction from irreducible supercuspidal  self-dual representations
of general linear groups to quasi-split unitary groups in terms of local gamma factors was studied in
\cite{ST}.
Now we describe our second application of \Cref{MF}, which determines the descents of irreducible depth zero supercuspidal representations in terms of supercuspidal types.
%In the second part of the paper, we apply \Cref{MF} to deduce explicit descriptions of 
%this descent for irreducible depth zero supercuspidal representations in terms of supercuspidal types.

% Descent construction is the inverse map of the Langlands functorial lifting. 
Let $\RF$ be a $p$-adic field, and $\RE$ be a quadratic extension of $\RF$.
Let $\CO_E$ denote the ring of integers in $E$, and $\Fp_E$ be the maximal ideal of $\CO_E$. 
Let $\SCEFm$ denote the set of irreducible depth zero supercuspidal representations  of $\GL_m(\RE)$ 
distinguished by $\GL_m(\RF)$. 
As a result of \cite{CG},
$\SCEFm$ is non-empty 
%$\SCEFm \neq \varnothing$
only if $\RE/\RF$ is unramified when $m$ is odd and $\RE/\RF$ is ramified when $m$ is even. In these cases, $\SCEFm$  is parameterized by the orbits of 
\[
\Femoo := \Fe_m^\circ\cap \Fe_m^1
\]
under the $\Gal(\Fe_m/\Fe)$-action. 
In particular, for each $s\in \Femoo$, we attach an irreducible depth zero supercuspidal representation distinguished representation $\tau_s\in \SCEFm$ (see \Cref{thm-dist}). 

Let $g^t$ denote the transpose of a matrix $g$, and let $\iota\in\Gal(\RE/\RF)$ be the nontrivial element that acts entrywise on any matrix over $E$. 
Let $W =\RE^{2n}$ be  a $2n$-dimensional vector space endowed with a skew-Hermitian form
\[
\inn{v_1}{v_2} = v_1^t J_{2n} \iota(v_2) \quad \text{for } v_1,v_2 \in W, 
\]
where 
\[
J_{2n} = \begin{pmatrix} & w_n \\ -w_n & \end{pmatrix}, \quad w_n = \begin{pmatrix} & & & 1 \\ & & 1 & \\
& \iddots & & \\
1 & & & \end{pmatrix}_{n\times n}.
\]
Let $H$ be the quasi-split unitary group $\RU_{2n}(F)$ defined by 
\[
H :=\RU(W) :=  \Set{g\in \GL_{2n}(\RE)  | gJ_{2n}  \iota(g^t) = J_{2n}}.
\]

From now on, we fix a character $\mu$ of $\RE^\times$ such that $\mu|_\RF^\times$ is the quadratic character 
corresponding to the quadratic extension $\RE/\RF$ via the local class field theory.

A lattice $L$ of $W$ is called self-dual if %and only if %in the sense that
\begin{equation}\label{eq:sdlattice}
L =\Set{v'\in W | \langle v', v\rangle \in \CO_\RE \textrm{ for every }v\in L}. 
\end{equation}
The set of self-dual lattices forms a single $H$-orbit. We fix a self-dual lattice $L$ from now on. 
Let 
\[
H_L := \Set{g \in H : g L = L}, \quad
H_{L,+} := \Set{g\in H: g L \subset \Fp_E L} \quad \text{and}\quad 
\sfH_L := H_L/H_{L,+}.
\]
For simplicity, we employ the same symbol to represent an 
 $\sfH_L$-module and its inflation to $H_L$ through the quotient map $H_L\rightarrow \sfH_L$. 
%Now let $L$ be a self-dual lattice and 
%Now let $L$ be a self-dual lattice and $y$ be the element in the building of $H$ corresponding to $L$ (see \cite[Section~4]{BS}).
%and $\sfH_y := H_y/H_{y,0+}$ where $H_y$ and $H_{y,0^+}$ are the Moy-Prasad groups attached to  $y$.  
%Furthermore, a character $\xi_\mu$ of $H_y$ is defined,  see \eqref{eq:xi.mu}. 

According to the ramification of $\RE/\RF$, we have the following two cases:
\begin{itemize}
\item The extension $\RE/\RF$ is unramified and $m+1 = 2n$: The group $\sfH_L$ is naturally 
isomorphic to the unitary group ${\sf U}_{2n}$ defined over $\Ff$. 
The group $\sfH_L$ has a maximal tori $\sfS_0$
such that the set of $\Ff$-points of the dual tori $\sfS_0^*$ is naturally isomorphic to $\Fe_m^1\times \Fe^1$ up to the $\Gal(\Fe_m/\Ff)\times \Gal(\Fe/\Ff)$-action.   
%whose set of $\Ff$-points is naturally isomorphic to 
%$\Fe_{m}^1\times \Fe^1$ up to $\Gal(\Fe_m/\Ff)$-action. 
%Therefore, we also identify the set of $\Ff$-points of dual tori $\sfS_0^*$ with $\Fe_{m}^1\times \Fe^1$ up to $\Gal(\Fe_m/\Ff)$-action. 
%the dual group $\sfH_y^*$ of $\sfH_y$ has a maximal tori naturally isomorphic to 
For $(s,a) \in \Femoo\times \Fe^1$, let $\barsigma_{s,a}$ denote the the cuspidal representation $(-1)^n R^{\sfH_L}_{\sfS_0, (-s,a)}$. 
Then the compactly induced module  
\begin{equation} \label{siga}
\sigma_{s,a} := \cInd^H_{H_L}(\barsigma_{s,a}\otimes \xi_\mu^{-1})
\end{equation}
is a depth zero supercuspidal representation of $H$ (see \cite[Proposition~6.6]{MP}).
Note that the set  
\[
\set{\sigma_{s,a} | a\in \Fe^1}
\]
only depends on the $\Gal(\Fe_m/\Ff)$-orbit of $s$ by \cite[Theorem~6.8]{DL}.  
\item The extension $\RE/\RF$ is ramified and $m=2n$:
The group $\sfH_L$ is naturally isomorphic to the symplectic group $\sfSp_{2n}$ defined over $\Ff$. 
The group
$\sfH_L$ has a maximal tori $\sfS_0$ such that the $\Ff$-points of the dual tori $\sfS_0^*$ is naturally isomorphic to 
$\Fe_{m}^1$ up to $\Gal(\Fe_m/\Ff)$-action. 
For $s \in \Femoo$, let 
$\barsigma_s$ denote the cuspidal representation  $(-1)^n R^{\sfH_L}_{\sfS_0, -s}$. 
Then the compactly induced module %representation %we define 
\begin{equation} \label{sig}
\sigma_s := \cInd^H_{H_L}(\barsigma_s\otimes \xi_\mu^{-1}). 
\end{equation}
is a depth zero supercuspidal representation of $H$. 
Note that the $H$-module $\sigma_s$ only depends on the  $\Gal(\Fe_m/\Ff)$-orbit of $s$. 
\end{itemize}

The local descent construction  $\DD$ is a map sending certain 
irreducible representations of $\GL_m(\RE)$ to representations of $\RU_{2\lfloor\frac{m+1}{2}\rfloor}(\RF)$. 
We refer to \eqref{eq:descent} for the definition of $\DD$ with respect to the choice of $\mu$. 

Now we can state the main result on local descent for unitary groups. 
\begin{thm}\label{thm:DD}
Retain the notation above. Let $s\in \Femoo$, with $m\geq 2$.
\begin{enumerate} % [wide=0em,label=\alph*.]
\item If $\RE/\RF$ is unramified and $m=2n-1$, then  $\DD(\tau_s)$ contains 
$\sigma_{s,a}$ as a multiplicity free direct summand  for each $a\in \Ff_2^1$.
\item If $\RE/\RF$ is ramified and $m=2n$, then
\[
\DD(\tau_s) = \sigma_s. 
\]
\end{enumerate}
\end{thm}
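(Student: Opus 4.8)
The plan is to follow the descent recipe of \cite{ST} and reduce the computation of $\DD(\tau_s)$ to the finite-field Fourier–Jacobi multiplicities supplied by \Cref{MF}. By construction (see \eqref{eq:descent}), $\DD(\tau_s)$ is obtained from $\tau_s$ by an exact Fourier–Jacobi (twisted-Jacquet) functor along a unipotent subgroup of $\RU_{2n}(\RF)$ (or of an auxiliary larger unitary group), twisted by a Heisenberg–Weil datum and by the character $\mu$. Since $\tau_s$ is depth zero, \Cref{thm-dist} realizes it as a compact induction from a maximal parahoric $K$ of $\GL_m(\RE)$ of a representation inflated from a cuspidal Deligne–Lusztig character $\bar\tau_s$ of the finite reductive quotient of $K$, with $\bar\tau_s$ pinned down by the $\Gal$-orbit of $s$. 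One then commutes the descent functor past the compact induction and analyzes the outcome by Mackey theory over the double cosets cut out by the Bruhat–Tits buildings of $\GL_m(\RE)$ and $\RU_{2n}(\RF)$. The arithmetic hypotheses — $\RE/\RF$ unramified with $m=2n-1$, or $\RE/\RF$ ramified with $m=2n$ — are precisely the cases in which this double-coset set collapses to the single $\RU_{2n}(\RF)$-orbit of the point $y$ attached to the self-dual lattice $L$. Hence $\DD(\tau_s)$ is built entirely from representations $\cInd^H_{H_y}(\rho\otimes\xi_\mu^{-1})$ with $\rho\in\Irr(\sfH_y)$, and by Frobenius reciprocity together with the depth-zero structure, for each such $\rho$,
\[
\dim\Hom_H(\cInd^H_{H_y}(\rho\otimes\xi_\mu^{-1}),\ \DD(\tau_s)) \;=\; \dim\Hom_{\sfH_y}(\rho,\ \Theta),
\]
where $\Theta$ is the representation of $\sfH_y$ obtained from $\bar\tau_s$ by the induced finite-field Fourier–Jacobi operator (the depth-zero part of $\DD(\tau_s)|_{H_y}$, after removing the $\xi_\mu^{-1}$-twist).

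Next I would identify $\Theta$ and rewrite $\dim\Hom_{\sfH_y}(\rho,\Theta)$ as a finite Fourier–Jacobi inner product of $\sfH_y$. The $\GL_m(\RF)$-distinction of $\tau_s$ forces $\bar\tau_s$ to be a $\GL_m$-distinguished cuspidal character over the residue field of $\RE$, and by the finite-field dictionary underlying \cite{CG} this is governed by the orbit of $s$; carrying it through the Heisenberg–Weil twist, one shows that
\[
\dim\Hom_{\sfH_y}(\rho,\ \Theta) \;=\; \langle R^{\sfH_y}_{\sfS_1,\theta_s}\otimes\omega_\psi^\vee,\ \rho\rangle_{\sfH_y}
\]
for an explicit anisotropic maximal torus $\sfS_1$ of $\sfH_y$ (with $\sfH_y={\sf U}_{2n}$, resp. $\sfSp_{2n}$) and an explicit regular character $\theta_s$ read off from $s$ — so the right-hand side is exactly the quantity evaluated by \Cref{MF}. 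Since $s$ is a regular element, $\theta_s$ and the characters $(-s,a)$ (resp. $-s$) attached to $\sfS_0$ are regular, so the simplified formula \eqref{reg-MF} applies. The delicate computation here is determining the geometric position of $\sfS_1$ relative to $\sfS_0$ and the precise $\theta_s$, and checking that they are the data for which \Cref{MF} returns the asserted value.

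Finally I would run \Cref{MF} / \eqref{reg-MF} in this configuration. In the unramified case, taking $\rho=\barsigma_{s,a}=(-1)^n R^{{\sf U}_{2n}}_{\sfS_0,(-s,a)}$, one checks that $\theta_s$ and $(-s,a)$ do not intertwine for any $a\in\Ff_2^1$, so $\langle R^{{\sf U}_{2n}}_{\sfS_1,\theta_s}\otimes\omega_\psi^\vee,\barsigma_{s,a}\rangle_{{\sf U}_{2n}}=1$; because $\sigma_{s,a}=\cInd^H_{H_y}(\barsigma_{s,a}\otimes\xi_\mu^{-1})$ is supercuspidal, this forces it to split off $\DD(\tau_s)$ as a multiplicity-free direct summand, one for each $a$, proving (1) (and $\DD(\tau_s)$ carries further constituents there, since the finite Fourier–Jacobi coefficient $\Theta$ is larger, which is why only "direct summand" is claimed). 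In the ramified case one checks instead that $\langle R^{\sfSp_{2n}}_{\sfS_1,\theta_s}\otimes\omega_\psi^\vee,\rho\rangle_{\sfSp_{2n}}$ vanishes for every $\rho\in\Irr(\sfSp_{2n})$ with $\rho\not\cong\barsigma_s$ and equals $1$ for $\rho=\barsigma_s$, i.e. $\Theta=\barsigma_s$ on the nose, whence $\DD(\tau_s)=\cInd^H_{H_y}(\barsigma_s\otimes\xi_\mu^{-1})=\sigma_s$, proving (2). Throughout, the sign $(-1)^n$ and the twist $\xi_\mu^{-1}$ coming from the choice of $\mu$ must be propagated consistently with the conventions in \eqref{siga}, \eqref{sig} and \eqref{eq:xi.mu}.

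The main obstacle is the first step: showing the descent functor commutes with compact induction in the required way, and that the Bruhat–Tits double-coset analysis collapses to the single orbit of $y$ carrying the correct finite datum. This demands a careful Moy–Prasad and Mackey computation — identifying which self-dual lattices are compatible with the hereditary order attached to $\tau_s$, ruling out contributions from other parahorics or from positive depth, and matching the Heisenberg–Weil and $\mu$-twists at the finite level so that the surviving piece of $\DD(\tau_s)|_{H_y}$ is exactly (the $\xi_\mu^{-1}$-twist of) the virtual character $R^{\sfH_y}_{\sfS_1,\theta_s}\otimes\omega_\psi^\vee$. Once this reduction and the identification of $\sfS_1,\theta_s$ are in place, \Cref{MF} completes the proof essentially as a black box.
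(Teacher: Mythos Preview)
Your proposal misidentifies where the finite-field reduction lands, and this creates a genuine gap. The map $\DD$ is not a Jacquet-type functor applied directly to $\tau_s$: by definition it is the descent $\CD_{l_0,\psi_\RF,\mu}$ applied to the \emph{Langlands quotient} $\pi_\tau$ of $\rho_{\tau,1}=\Ind^{\RU_{2m}(\RF)}_P\tau_{1}$. So the relevant depth-zero picture lives on $\sf G=\RU_{2m}$ (or $\Sp_{2m}$), not on $\sfH_y$. The paper shows (Proposition~\ref{K-type}) that $\rho_{\tau,\sfs}^{G_{x,0+}}\cong \pi_\reg\oplus\pi_{\rm ss}$ and, crucially, that $\pi_\tau^{G_{x,0+}}\cong\pi_{\rm ss}$, the \emph{non-generic} half of the non-regular Deligne--Lusztig representation $R^{\sf G}_{{\sf T},s}$. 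The finite input needed is therefore $\CD_{l_0,\psi_\Ff}(\pi_{\rm ss})$, which is the content of the non-regular computations Theorems~\ref{U-des} and~\ref{SP-des}, not the regular formula \eqref{reg-MF}.

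Your proposed identity $\Theta\cong R^{\sfH_y}_{\sfS_1,\theta_s}\otimes\omega_\psi^\vee$ with $\sfS_1$ anisotropic and $\theta_s$ regular cannot hold: by \eqref{reg-MF} (or the anisotropic special case in the introduction) that virtual character pairs to $1$ against \emph{every} $\barsigma=(-1)^{{\rm rk}\,\sfS_0}R^{\sfH_y}_{\sfS_0,s_0}$ whose data do not intertwine with $\theta_s$, so $\Theta$ would contain a large supply of cuspidals and you could never conclude $\Theta=\barsigma_s$ in case (2). What actually pins the descent down is that $\pi_{\rm ss}=\tfrac12(R^{\sf G}_{{\sf T},s}\pm R^{\sf G}_{{\sf T}_{\rm a},s})$ is non-regular, and the two terms cancel against all $\sfS_0$-data except those carrying $-s$; this is exactly the content of Theorems~\ref{U-des}(2) and~\ref{SP-des}(2). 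The paper then produces a nonzero $R_{l_0}$-homomorphism by unfolding the open $(P,R_{l_0})$-double coset of \cite{ST} inside $\RU_{2m}(\RF)$, passing to the generalized lattice model via Corollary~\ref{weil-type} to control $\xi_\mu$, and invoking Proposition~\ref{K-type} to see that this homomorphism does not vanish on $\pi_{\rm ss}\subset\pi_\tau$. Multiplicity-freeness in (1) and irreducibility in (2) then come from the Soudry--Tanay Theorem~\ref{ST-thm}, not from the finite computation alone. Your Mackey/building analysis on $H$ and the appeal to \eqref{reg-MF} bypass all of this and, as written, would not prove either part.
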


\Cref{thm:DD} can be viewed as an  
explicit realization of supercuspidal representations, which might find
applications in the study of Rankin-Selberg integrals, local L-functions, and
other local factors (see e.g. \cite{AKM+, ST}). 
%Our results should have some
%applications in this direction.
 
The approach of this paper was first used in \cite{LMNW}, a work in progress, to
study the depth zero local descent for $p$-adic special orthogonal groups in the
sense of \cite{JNQ}, where the upshot was to extend the descent method beyond
the supercuspidal case and thereby give more examples for the non-tempered
Gan-Gross-Prasad conjecture. In future works, we also hope to extend the current
paper and \cite{LMNW} to representations of positive depth.

\delete{
The result in \cite{R} was used in \cite{GR} to verify some cases of the
Gan-Gross-Prasad conjecture for $p$-adic special orthogonal groups. The local
Gan-Gross-Prasad conjecture in the tempered (or generic) case (\cite{GGP1}) has
been proven for $p$-adic local fields through many works which are not recalled
here, but as we mentioned, it remains wide open in the non-tempered case
(\cite{GGP2}).

The descent construction (cf. \cite{JS, JNQ, ST}) seeks the inverse map of the
Langlands functorial lift (from classical groups to general linear groups). This
construction is naturally tied with the non-tempered Gan-Gross-Prasad
conjecture.
% As pointed out in \cite[Section 12]{GGP2}, the results obtained from the local
% descent are most complete for supercuspidal or unramified L-parameters.

In the second part of the paper, we apply \Cref{MF} to deduce explicit descriptions of 
the descent of irreducible supercuspidal depth zero self-dual representations
of general linear groups defined over quasi-split unitary groups in terms of types.

% We focus on the descent construction  $p$-adic unitary groups as considered in \cite{ST}, and we
% will apply \Cref{MF} to study the depth zero local descent for this case
% by using the explicit construction of supercuspidal representations.
Let $\RE/\RF$ be a quadratic extension of $p$-adic local fields. Let $\tau$ be
an irreducible supercuspidal representation of $\GL_m(\RE)$ such that the local
Asai L-function $L({\sfs}, \tau, {\rm As})$ has a pole at ${\sfs}=1$, or
equivalently that $\tau$ is distinguished by the subgroup $\GL_m(\RF)$. Consider the  
normalized parabolic induction
\[
  \rho_{\tau, 1}:=\Ind^{\RU_{2m}(\RF)}_P(\tau\otimes |\det |_{\RE}^{1/2}),
\]
where $\RU_{2m}(\RF)$ is quasi-split skew-Hermitian and $P$ is the Siegel
parabolic subgroup.
The module $\rho_{\tau,1}$ has a unique irreducible quotient, denoted by
$\pi_\tau$.

The local descent studied in \cite{ST} is concerned with the Fourier-Jacobi
model for the relevant pair of quasi-split skew-Hermitian unitary groups
$\RU_{2m}(\RF)$ and $\RU_{2n}(\RF)$, where $n := \lfloor \frac{m+1}{2}\rfloor$.
Fix a nontrivial additive character $\psi_\RF$ of $\RF$, and a character
$\mu: \RE^\times \to \BC^\times$ such that $\mu\vert_{\RF^\times}$ is 
the quadratic character associated with $\RE/\RF$ via local class field theory.
Associated with these data, the local descent of $\pi_\tau$, denoted by
$\CD_{l_0, \psi_\RF, \mu}(\pi_\tau)$ with $l_0 := m-n$, is a certain twisted
Jacquet module and is a smooth representation of $\RU_{2n}(\RF)$ (see Section
\ref{sec-LDPUG} for the precise definition and the connection with the
non-tempered Gan-Gross-Prasad conjecture).

In \cite{ST} it was proven that $\CD_{l_0, \psi_\RF, \mu}(\tau_\tau)$ is a
multiplicity free direct sum of $\psi_\RF^{-1}$-generic supercuspidal
representations of $\RU_{2n}(\RF)$, and in particular is irreducible if $m=2n$
is even. In this paper, we assume that $\tau$ is of depth zero. Then by the
distinction criterion (\cite{CG, HM1, HM2, S}), it holds that $m=2n-1$ is odd if
$\RE/\RF$ is unramified, and $m=2n$ is even otherwise, and we have the explicit
depth zero supercuspidal data of $\tau$ given by Theorem \ref{thm-dist}.

The second main result of the paper is Theorem \ref{thm:LD}, which describes the
local descent $\CD_{l_0, \psi_\RF, \mu}(\pi_\tau)$ (with $n\geq 2$) explicitly
in terms of the depth zero supercuspidal data. This result is most complete when
$\RE/\RF$ is ramified so that $m=2n$, in which case
$\CD_{l_0, \psi_\RF, \mu}(\pi_\tau)$ is irreducible.
 
To establish Theorem \ref{thm:LD}, we first prove its finite field analog given
by Theorem \ref{U-des} and Theorem \ref{SP-des}, which are two non-regular
examples of Theorem \ref{MF}. Then we analyze the minimal $K$-types of the
induced representation $\rho_{\tau,1}$ and its Langlands quotient $\pi_\tau$, as
given in Section \ref{KTIR}. This enables us to compare the calculation in
\cite{ST} over $p$-adic fields and finite fields in Section \ref{sec-EDZLD}, and
thereby finish the proof. An important ingredient in the proof is the explicit
comparison between the Schr\"odinger model and the generalized lattice model of
the Weil representation of $\RU_{2n}(\RF)$, which was carried out in \cite{P1}
and reformulated in \cite{O}, and we summarize the result in Section
\ref{sec-MSWR}.
 
The explicit construction of supercuspidal representations has found
applications in the study of Rankin-Selberg integrals, local L-functions and
other local factors (see e.g. \cite{AKM+, ST}). Our results should have some
applications along this direction.
 
The approach of this paper was first used in \cite{LMNW}, a work in progress, to
study the depth zero local descent for $p$-adic special orthogonal groups in the
sense of \cite{JNQ}, where the upshot was to extend the descent method beyond
the supercuspidal case and thereby give more examples for the non-tempered
Gan-Gross-Prasad conjecture. In future works we also hope to extend the current
paper and \cite{LMNW} to representations of positive depth.
}

\subsection{Organization of the paper} 
The contents of the paper are divided into two parts: The first, spanning \Cref{sec-P} to \Cref{sec-TNRE}, focuses on the Fourier-Jacobi models for finite classical groups. The second, spanning \Cref{sec-LDPUG} to \Cref{sec-EDZLD},  addresses the depth zero local descent for $p$-adic unitary groups.
%The contents of the paper subdivide into two parts: in Sections \ref{sec-P}--\ref{sec-TNRE} we study the Fourier-Jacobi models for finite classical groups, and in Sections \ref{sec-LDPUG}--\ref{sec-EDZLD} we study the depth zero local descent for $p$-adic unitary groups.
 
 \subsection*{Acknowledgement} 
% We would like to thank Chufeng Nien and Zhicheng Wang for the discussions during the preparation of the work \cite{LMNW}, and to thank Dihua Jiang and Lei Zhang for sharing their expertise on descent theory.
% In the end, we thank the annomanous referee for giving sugguestions to imporve the presentation of the paper. 
We express our gratitude to Chufeng Nien and Zhicheng Wang for their invaluable insights during the preparation of the work \cite{LMNW}. 
We are also indebted to Dihua Jiang and Lei Zhang for  generously sharing their expertise on descent theory. 
Finally, our sincere thanks go to the anonymous referee whose suggestions significantly improved the presentation of this paper.
 
 D. Liu and F. Shi are supported in part by the National Natural Science Foundation of China (Grant No. 12171421) and Natural Science Foundation of Zhejiang Province (Grant No. LZ22A010006). J.-J. Ma is supported in part by the National Natural Science Foundation of China (Grant No. 11701364 and Grant No. 11971305), Xiamen University Malaysia Research Fund (Grant No. XMUMRF/2022-C9/IMAT/0019), and the Fundamental Research Funds for the Central Universities (No. 20720230022).

%\part{Finite Classical Groups}

\section{Preliminaries} \label{sec-P}

In this section we make some preliminaries from algebraic geometry, and collect some facts about Weil representations of finite classical groups.

\subsection{Functions of geometric type} \label{ss2.1} We introduce  the following notion. 

\begin{defn} \label{geotype}
Let  $\CP\subset \BZ_+$ be an arithmetic progression. A function $M: \CP \to \BC $ is said to be of geometric type if it is of the form
\[
M(\nu) = \frac{\sum^k_{i=1} a_i \alpha_i^\nu}{\sum^l_{j=1} b_j \beta_j^\nu } ,\quad \nu\in\CP,
\]
where $a_i, \alpha_i, b_j, \beta_j \in\BC$, and the denominator is nonzero for every $\nu\in \CP$.  
\end{defn}

We have the following elementary but crucial lemma, for which we will give a proof for completeness.
% is elementary, and can be easily verified using Vandermonde determinant. 

\begin{lem} \label{const}
Let $M$ be a function of geometric type defined on an arithmetic progression $\CP$. If $M$ is integer-valued and  has a finite limit as $\nu\to\infty$ along $\CP$, then 
$M$ is a constant function. 
\end{lem}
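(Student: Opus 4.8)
The plan is to combine two elementary facts. First, an integer-valued function that converges to a finite limit along an arithmetic progression is eventually constant; second, a finite exponential sum $\sum_k c_k\gamma_k^\nu$ that vanishes at all sufficiently large points of an arithmetic progression vanishes at \emph{every} point of it, by a Vandermonde determinant argument. Neither step presents a real obstacle; the one point to get right is that, when passing from $\nu$ to the index $d$ of the progression, one must group the exponentials according to the value of $\gamma_k^m$ (not of $\gamma_k$), since raising to the $m$-th power can identify distinct bases, and the Vandermonde step needs genuinely distinct, nonzero bases.

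Write $\CP=\{\nu_0+md : d\in\BZ_{\geq 0}\}$ with $m\geq 1$. Since $M$ takes values in $\BZ$ and $M(\nu)\to L$ as $\nu\to\infty$ along $\CP$, taking $\varepsilon=1/2$ in the definition of the limit shows that $L\in\BZ$ and that there is $d_1$ with $M(\nu_0+md)=L$ for all $d\geq d_1$. Write $M=P/Q$ with $P(\nu)=\sum_i a_i\alpha_i^\nu$ and $Q(\nu)=\sum_j b_j\beta_j^\nu$, where $Q(\nu)\neq 0$ on $\CP$ by hypothesis, and set
\[
N(\nu):=P(\nu)-L\,Q(\nu)=\sum_{k} c_k\gamma_k^\nu ,
\]
a finite $\BC$-linear combination of exponentials. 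The previous step gives $N(\nu_0+md)=0$ for all $d\geq d_1$.

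Now reindex: $N(\nu_0+md)=\sum_k c_k\gamma_k^{\nu_0}(\gamma_k^m)^d$. Discard the terms with $\gamma_k=0$, which vanish on $\CP$ since every element of $\CP$ is positive, and collect the remaining terms according to the value of $\gamma_k^m$, obtaining
\[
g(d):=N(\nu_0+md)=\sum_{l=1}^{r} e_l\,\mu_l^d
\]
with $\mu_1,\dots,\mu_r\in\BC$ pairwise distinct and nonzero. Since $g(d)=0$ for $d=d_1,d_1+1,\dots,d_1+r-1$, the linear system $\sum_{l=1}^{r}\big(e_l\mu_l^{d_1}\big)\mu_l^{\,i}=0$ for $0\leq i\leq r-1$ has the invertible Vandermonde matrix $(\mu_l^{\,i})_{0\leq i\leq r-1,\,1\leq l\leq r}$ as coefficient matrix, so $e_l\mu_l^{d_1}=0$, hence $e_l=0$ for every $l$. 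Thus $g\equiv 0$, i.e.\ $N$ vanishes on all of $\CP$, i.e.\ $M(\nu)=L$ for every $\nu\in\CP$, which is the assertion.

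Everything beyond the grouping step is routine bookkeeping: the only thing one must be careful about is keeping the bases $\mu_l$ distinct and nonzero before invoking Vandermonde, which is exactly what the collecting-and-discarding step arranges.
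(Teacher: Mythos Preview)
Your proof is correct and follows precisely the approach indicated in the paper, which simply remarks that the lemma ``can be easily verified using Vandermonde determinant'' without spelling out the details. Your two-step argument---first using integrality plus convergence to get eventual constancy, then clearing denominators and applying Vandermonde to the resulting exponential sum---is exactly what is intended, and your care in grouping by $\gamma_k^m$ and discarding zero bases before invoking Vandermonde is the right way to make the argument airtight.
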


\begin{proof}
By assumption, there is an integer $c$ such that  $M(\nu) = c$ for all $\nu$ sufficiently large. Since $M(\nu)-c$ is also a function of geometric type, it suffices to prove that $M$ is zero when $c=0$.
Write $\CP=\set{\mu, \mu+m, \mu+2m,\ldots}$ with $\mu, m\in \BZ_+$.
Define a function $\tilde{M}$ on the set $\BN$ of natural numbers by $\tilde{M}(d) = M(\mu+md)$ for $d\in \BN$. 
Without loss of generality, we can assume
\[
\tilde{M}(d) = 
\frac{\sum^k_{i=1}a_i' \alpha_i'^d}{\sum^l_{j=1}b_j'\beta_j'^d} \quad \text{for $d\in \BN$}
\]
such that $a_i',\alpha_i',b_j,\beta_j'\in \bC$, and  $\alpha_i'$'s are distinct and non-zero.
The assumption $M(\nu) = 0 $ for $\nu$ sufficiently large implies that $\sum_{i=1}^k a_i' \alpha_i^d = 0$ for $d$ sufficiently large.
Note that the determinant of the $k\times k$ Vandermonde matrix $(\alpha_i'^{j})$ is non-zero. It implies that all $a_i'$'s must be zero, thereby concluding the proof.
% Write $\CP=\{\mu, \mu+d, \mu+2d,\ldots\}$ with $\mu, d\in \BZ_+$. Define a function $\tilde{M}$ on the set $\BN$ of natural numbers by  
% \[
% \tilde{M}(m):= M(\mu+ md) = \frac{\sum^k_{i=1}a_i' \alpha_i'^m}{\sum^l_{j=1}b_j'\beta_j'^m} \quad \text{for $m\in \BN$,}
% \]
% with $a_i':=a_i\alpha_i^\mu$, $\alpha_i':=\alpha_i^d$, $b_j':=b_j\beta_j^\mu$ and $\beta_j':=\beta_j^d$.
% Without loss of generality, we may assume that the $\alpha_i'$'s are distinct, the $\beta_j'$'s are distinct, and moreover, $a_i'$, $\alpha_i'$, $b_j'$ and $\beta_j'$ are all nonzero. By the lemma's assumption, there is an integer $c$ such that  $\tilde{M}(m) = c$ for all $m$ sufficiently large.
% Let $\gamma_1,\cdots, \gamma_N$ be a sequence of distict complex numbers such that 
% \[
% \set{\gamma_1,\ldots, \gamma_N}=\set{\alpha_1',\ldots, \alpha_k'}\cup\set{\beta_1',\ldots, \beta_l'}.
% \]
% Then we have an equation 
% \[
% \sum^N_{t=1}c_t \gamma_t^m=0 \quad \text{ for all $m$ sufficiently large, }
% \]
% where each $c_t$ is of the form $a_i'$, $-c b_j'$ or $a_i'-c b_j'$ for some $i$ and $j$. Using the Vandermonde determinant, one can deduce that the $c_t$'s are all zero. This implies the following equality of sets of pairs 
%  \[
% \{(a_1', \alpha_1'),\ldots, (a_k', \alpha_k')\} = \{(cb_1', \beta_1'), \ldots, (cb_l', \beta_l')\}.
%  \]
%  Thus $\tilde M(m)=c$ for all $m\geq 0$, i.e. $M(\nu)=c$ for all $\nu\in\CP$.
\end{proof}

We will focus on functions of geometric type defined on arithmetic progressions $\CP$ starting from $\nu=1$, that is, 
\[
\CP=\CP_m:=\{1+m d : d\geq 0 \}, \quad \text{with } m\in \BZ_+.
\]
If $M$ and $M'$ are two such functions defined on $\CP_m$ and $\CP_{m'}$ respectively, then any linear combination of $M$ and $M'$, and the product $M\cdot M'$, are still functions of geometric type defined on $\CP_m\cap \CP_{m'}=\CP_{\mathrm{lcm}(m, m')}$ where $\mathrm{lcm}(m,m')$ denotes the least common multiple of $m$ and $m'$.

\medskip

The \Cref{geotype} is motivated from algebraic geometry as follows.
Let $\Ff$ be a finite field of characteristic $p$ and cardinality $q$, with a fixed algebraic closure $\bar{\Ff}$. Fix a prime number $\ell$ different from $p$, and fix once for all an identification $\overline{\BQ_\ell}\cong\BC$. For a quasi-projective scheme  $X$  defined over $\Ff$, denote by $ D^b(X, \overline{\BQ_\ell})$ the bounded derived category of constructible $\ell$-adic sheaves on $X$. For $\CF\in D^b(X, \overline{\BQ_\ell})$, the Grothendieck-Lefschetz trace formula (\cite[(2') p471]{SGA5}) relates the local and global traces of the geometric Frobenius $F$: 
\begin{equation} \label{trace}
\sum_{x\in X^{F^\nu}}\sum_i(-1)^i{\rm Tr}(F^\nu_x, H^i(\CF_x)) =\sum_i(-1)^i {\rm Tr}(F^\nu, H^i_c(X\times_\Ff{\bar{\Ff}}, \CF_{\bar\Ff})) \quad \text{ for } \nu\in\BZ_+ 
\end{equation} 
where $\CF_x$ and $\CF_{\bar{\Ff}}$ are the pullbacks of $\CF$ equipped with canonical Frobenius actions $F^\nu_x$ and $F^\nu$ respectively. 

Examining the right-hand side, it is evident that \eqref{trace} is a function of geometric type with respect to $\nu\in\BZ_+$. 
%The right hand side of \eqref{trace}, viewed as a function of $\nu\in\BZ_+$, is clearly a function of geometric type. 
In particular, by setting $\CF$ as the constant sheaf,  one concludes that the cardinality $|X^{F^\nu}|$ is a function of geometric type (defined on  suitable $\CP\subset\BZ_+$).
%In general, any rational function of $q^\nu$ is a function of geometric type. 

The inner sum on the left hand side of \eqref{trace} is a function on $X^{F^\nu}$,  %associated to $\CF$, 
which we denote by 
\begin{equation} \label{shea-fun}
f^{\CF, (\nu)}(x):=\sum_i(-1)^i {\rm Tr}(F^\nu_x, H^i(\CF_x)),\quad x\in X^{F^\nu}.
\end{equation}
%This procedure is called  Grothendieck's {\it faisceaux-fonction correspondence}.
The procedure of associating a function to $\CF$ is called 
 is called  Grothendieck's {\it faisceaux-fonction correspondence}.
We favor sheaves over functions because there are more functorial operations on sheaves. 
Notably, the base change operation is crucial in this paper, namely the {\it faisceaux-fonction correspondence}
relates $f^{\CF,(\nu)}(x)$ for $\nu \in \mathbb Z_+$ and their summations \eqref{trace} in a natural way.
%For our purpose in this paper the change of base fields plays the key role.

%\subsection{Weil representations and Fourier-Jacobi models}

\subsection{The Weil representation of a symplectic group and its geometrization}
Assume from now on that $p$ is odd. 
In the following, we view a symplectic %$\bar{\Ff}$-
space of dimension $2n$ defined over $\Ff$ as an additive affine group scheme $V$ defined over $\Ff$ together with a symplectic from
\[
\inn{\ }{\ }_V \colon V \times V \to \BA^1  \quad \text{($\BA^1$ is equipped with the natural additive group structure)}
\]
so that, by restriction to the set $V^F$ of $\Ff$-points,  $\inn{}{}_V$ gives the symplectic structure.

%Let $\inn{\ }{\ }_{V}$ denote the symplectic from on, 
%$\BH_{V^F}:= V^F\times \bar{\Ff}^F$ be 
The Heisenberg group associated to $V$ is the scheme 
$\BH_{V} := V\times \BA^1$ equipped with the following group law 
\[
(v,z) (v',z') = (v+v', z+z'+\half \inn{v}{v'}_{V}) \quad \text{for all $v,v'\in V$, and  $z,z'\in \bar{\Ff}$}. 
\]
%be the Heisenberg group scheme associated with $V$ \cite[]{GH}. Now the set $\BH_V^F$ of $\Ff$-points is a central extension of $V$ by $\bar{\Ff}$. 

Now let $\nu \in \BZ_+$. 
Fix a nontrivial additive character $\psi$ of $\Ff$. Then  
\[
\psi^{(\nu)}:=\psi\circ {\rm Tr}_{\Ff_\nu/\Ff}
\]
is a nontrivial additive character of $\Ff_\nu$. 
%The center of $\BH_V^{F^\nu}$ is naturally identified with $\Ff_\nu$.  
By \cite[Theorem 2.4 (a)]{G}, up to isomorphism there is a unique irreducible representation $\omega_{\psi}^{(\nu)}$ of $\Sp(V^{F^\nu})\ltimes \BH_{V}^F$, called the Weil representation,  such that
\begin{itemize}
    \item ${\omega_{\psi}^{(\nu)}}\vert_{\BH_{V}^{F^{\nu}}}$ is irreducible and the center of $\BH_{V}^{F^\nu}$ acts by $\psi^{(\nu)}$,
    \item if $\Sp(V^{F^\nu})\cong \SL_2(\BF_3)$, then ${\omega_{\psi}^{(\nu)}}\vert_{\Sp(V^{F^\nu})}$ is non-isomorphic to its complex conjugate.
\end{itemize}

 The Weil representation is geometrized by Gurevich-Hadani \cite{GH}.
 We need the following reformulation of their main result. 
 %that we need can be formulated as follows.  
\begin{thm}  \label{geo-weil}
There exists an object  $\CF\in D^b(G\times V, \overline{\mathbb{Q}_\ell})$ of 
pure weight $2n$  such that $\CF[n^2+n]$ is perverse  and  
\[
\omega^{(\nu)}_\psi(g) = f^{\CF, (\nu)}(g,0) , \quad \text{ for $g\in G^{F^\nu}$}.
\]
Here $\omega^{(\nu)}_\psi$ denote the character of the Weil representation and $f^{\CF,(\nu)}$ is the function on $G^{F^\nu}\times V^{F^\nu}$ given by Grothendieck's faisceaux-function correspondence \eqref{shea-fun}. 
\end{thm}
% Here $f^{\CF, (\nu)}(g)$ is given by the faisceaux-fonction correspondence \eqref{shea-fun}, and   $\omega^{(\nu)}_\psi(g)$ denotes the character value of 
% $\omega^{(\nu)}_\psi$ at $g$ by abuse of notation. 
\begin{proof}
By \cite[Theorem 3.2.2.1]{GH} and \cite[(1.2.2)]{GH} 
there is an object $\CK\in D^b(G\times V, \overline{\BQ_\ell})$ of pure weight zero such that $\CK[n^2+n]$ is perverse and  
\[
f^{\CK,(\nu)}(g,0) = \frac{1}{\dim \omega_{\psi}^{(\nu)}} \omega_{\psi}^{(\nu)}(g). 
\]
Since $\dim\omega_{\psi}^{(\nu)} = q^{\nu n}$, we conclude that the $(-n)$-th Tate twist $\CK(-n)$ meets the requirement of the theorem.
See \cite{D} for the theory of weights of $\ell$-adic sheaves.
\end{proof}

A beneficial consequence of \Cref{geo-weil}, which is useful to us later, is that the sums \eqref{Theta} of character values involving $\omega^{(\nu)}_\psi$ are  functions of geometric type by the Grothendieck-Lefschetz trace formula, see \Cref{sec41}.

% Let $G$ be one of the connected reductive %$\bar{\Ff}$-
% algebraic groups $\Sp_{2n}$, $\RU_n$ or $\GL_n$ defined over $\Ff$  with Frobenius $F$.
% Let $V$ be a symplectic %$\bar{\Ff}$-
% space of dimension $2n$ defined over $\Ff$, on which $G$ acts by symplectic automorphisms, such that we have an embedding $G \hookrightarrow \Sp (V)$. Fix a nontrivial additive character $\psi$ of $\Ff$. Let $\BH_V$ be the Heisenberg group associated to $V$, which is a central extension of $V$ by $\bar{\Ff}$. 
% By \cite[Theorem 2.4 (a)]{G}, up to isomorphism there is a unique irreducible representation $\omega_\psi$ of $\Sp(V^F)\ltimes \BH_{V^F}$, called the Weil representation,  such that
% \begin{itemize}
%     \item $\omega_\psi\vert_{\BH_{V^F}}$ is irreducible and the center $\Ff$ of $\BH_{V^F}$ acts by $\psi$,
%     \item if $\Sp(V^F)\cong \SL_2(\BF_3)$, then $\omega_\psi\vert_{\Sp(V^F)}$ is disjoint from its complex conjugate.
% \end{itemize}
% The Weil representation $\omega_\psi$  can be realized on the space of functions on a Lagrangian subspace of $V^F$, hence is of dimension $q^n$. By restriction we also call $\omega_\psi$ the Weil representation of $G^F$. We also refer to \cite{Ho1} for more details. 
% 

\medskip

The character formula for $\omega_\psi:=\omega_{\psi}^{(1)}$ has been understood through many works (see \cite{Ho2, G, GH, T} for example).
For our purpose, we recall the restriction of $\omega_\psi^{(\nu)}$ to semisimple elements as given in \cite{G}. 

% Let $\Ff_j$, $j\geq 1$, be the unique degree $j$ extension of $\Ff$ in $\bar{\Ff}$.  Define $\Ff_{2j}^1$ to be the kernel of the norm map
%v $
% \Ff_{2j}^\times \to \Ff_j^\times.
% $

Let us represent a partition 
$\lambda$ in the form $\lambda = (j^{\lambda_j})$, indicating that $\lambda$ contains $\lambda_j$ parts each of size $j$. Then the size of $\lambda$ is given by $|\lambda | = \sum_j j \lambda_j$. 
Following \cite[Section~9]{R}, the $G^F$-conjugacy classes of $F$-stable maximal tori in $G$ are parameterized by pairs of partitions $(\lambda, \lambda')$ such that $|\lambda| + |\lambda'|=n$.
Under this parameterization, an $F$-stable maximal torus $T$ of $G$ corresponds to  a pair of partitions $(\lambda, \lambda')$ such that
\begin{equation} \label{TF}
T^F \cong \prod_j (\Ff_j^\times)^{\lambda_j} \times (\Ff_{2j}^1)^{\lambda'_j}.
\end{equation} 
Let $\vartheta_j$ and $\vartheta_j'$ represent the unique nontrivial quadratic characters of $\Ff_j^\times$ and $\Ff_{2j}^1$ respectively.
Define the quadratic character  $\vartheta_T$ of $T^F$  as the product of the $\vartheta_j$'s and $\vartheta_j'$'s according to the isomorphism \eqref{TF}.
In accordance with \eqref{TF}, for an element $s\in T^F$, the component of $s$ in the $j$-th block is expressed as  
\[
 (s_{j1},\ldots, s_{j\lambda_j}; s'_{j1},\ldots, s'_{j\lambda_j'}).
\]
Following \cite[Corollary 4.8.1]{G}, we have
\begin{equation} \label{char}
\omega_\psi(s) = (-1)^{l(T^F, s)}\vartheta_T(s) q^{\frac{1}{2}\dim V^s},
\end{equation} 
where $V^s =\Ker(s-1_V)$ represents the eigenspace of $s$ acting on $V$ with eigenvalue 1, and
\[
l(T^F, s): =\left| \Set{ (j, k) | 1\leq k \leq \lambda_j', \ s'_{jk}\neq 1} \right|. 
\]

% In this paper,  for  two representations $\pi$ and $\sigma$ of $G^F$, we are concerned with the space 
% $
% \textrm{Hom}_{G^F}(\pi\otimes\omega_{\psi}^\vee, \sigma),
% $
% and its dimension
% \begin{equation} \label{FJ}
% m(\pi, \sigma):=\langle \pi\otimes \omega_\psi^\vee, \sigma\rangle_{G^F} = \dim \textrm{Hom}_{G^F}(\pi\otimes\omega_{\psi}^\vee, \sigma).
% \end{equation}
% In general,  we can consider the multiplicity on the left hand side of \eqref{FJ} for any virtual characters  $\pi$ and $\sigma$ of $G^F$. This is referred as the basic case of 
% Fourier-Jacobi models. 
% 
% For an $F$-stable maximal torus $T$ of $G$ and a character $\chi\in \Irr(T^F)$, the Deligne-Lusztig character $R^G_{T, \chi}$ defined in \cite{DL} is  a virtual representation of $G^F$.   Let $R^G_{T, \chi}$ and $R^G_{S, \eta}$ be two such characters. The first goal in this paper is to compute the multiplicity 
% \[
% \langle R^G_{T,\chi}\otimes \omega_\psi^\vee, R^G_{S,\eta}\rangle_{G^F}.
% \]
% % in the case that $\chi$ and $\eta$ are regular so that $R^G_{T, \chi}$ and $R^G_{S, \eta}$ are irreducible up to sign. 
% 

\subsection{Weil representations of $\RU_n$ and $\GL_n$}
In this section, let $G$ be the unitary group $\RU_n$ or general linear group $\GL_n$ defined over $\Ff$.   
Let $V$ be a symplectic space of dimension $2n$ define over $\Ff$. 
Then $G$ is naturally embedded in $\Sp(V)$ and we let 
$\omega_{\psi}^{(\nu)}$ denote the restriction of the Weil representation of $\Sp(V^{F^{(\nu)}})$ to $G^{F^{\nu}}$.  
As before, let $\omega_{\psi}:= \omega_{\psi}^{(1)}$. 

It is worth noting that the Weil representation of 
of $G^F$ defined in \cite{G}, denoted by $\omega_\psi^\flat$, differs from $\omega_\psi$.
Their relationship can be expressed as  
\[
\omega_\psi^\flat \cong \omega_\psi \otimes \chi_G
,\] 
where $\chi_G$ represents the quadratic character of $G^F$ defined by  
\begin{equation}\label{weil2}
\chi_G(g):=\det(g)^{\frac{q-\varepsilon}{2}}, \quad g\in G^F,
\end{equation}
with $\varepsilon=-1$ for $G=\RU_n$ and $\varepsilon=1$ for $G=\GL_n$.  

Let $T$ be an $F$-stable maximal torus of $G$. Then $T$ is an $F$-stable maximal torus of $\Sp(V)$ via the embedding $G \hookrightarrow \Sp(V)$.
Recall the character $\vartheta_T$ of $T^F$ defined in the previous subsection.
We see that \[
\vartheta_T = \chi_G|_{T^F},
\]
where $\chi_G$ is given in \eqref{weil2}.
We now recall the character values of $\omega_\psi$ when restricted to the tori in $G$. 
We associate the pair $(\lambda,\lambda')$ of partitions such that $|\lambda|+|\lambda'|$ to an $F$-stable maximal torus in $G$ if \eqref{TF} is satisfied. 
This gives an identification of 
the $G^F$-conjugacy classes of $F$-stable maximal tori in $G$ with a subset of the set of pairs of partitions. 
More precisely, we have the following results by \cite[Corollaries 1.4 and 4.8.2]{G}, which can be also deduced from (\ref{char}).
\begin{lem}
\label{AT}
Retain the above notation. 
\begin{enumerate}[label=(\roman*),wide]
\item If $G=\RU_n$, then $\lambda_j=0$ for $j$ odd, and $\lambda_j'=0$ for $j$ even,
%In this case \eqref{char} simplifies to  {\rm (\cite[Corollary 4.8.2]{G})}
and one has
\[
\omega_\psi(s) = (-1)^n \vartheta_T(s) (-q)^{\frac{1}{2}\dim V^s} ,\quad s\in T^F.
\]
\item If $G=\GL_n$, then $\lambda_j'=0$ for all $j$, and one has
\[
\omega_\psi(s) =  \vartheta_T(s) q^{\frac{1}{2}\dim V^s},\quad s\in T^F.
\]
\end{enumerate}
\end{lem}

We conclude this section with the following remarks on the character formula. 

\begin{rmk}  \label{weil-rmk}
Suppose that $G$ is a finite symplectic, unitary, or general linear group. 
\begin{enumerate}[wide]
\item There exists $m\in\BZ_+$ $($see Section 
\ref{ss3.4.1} for the precise condition$)$ such that for $\nu\in\CP_m$, the character formula for $\omega_\psi^{(\nu)}$ at $s\in T^{F^\nu}$ is   similar to \eqref{char} with $q$ replaced by $q^\nu$ and $\vartheta_T$ replaced by 
\[
\vartheta_{T}^{(\nu)}: = \vartheta_T \circ N^T_\nu,
\]
where $N^T_\nu: T^{F^\nu}\to T^F$ is the norm map.
%$$\mjj{Here $\nu$ must be in a arithmetic progression? Otherwise the type of tori may change.}

\item For a general element $g\in G^{F^\nu}$, the formula for $\omega_\psi^{(\nu)}(g)$ involves the Weil index of $\psi$, and by \cite[Theorem 4.4]{G} it holds that
\[
\left| \omega^{(\nu)}_\psi(g) \right| = q^{\frac{\nu}{2} \dim V^g},
\]
where $V^g=\Ker(g-1_V)$. 
\end{enumerate}
\end{rmk}

\section{Reeder's multiplicity formula} \label{sec:reeder}

This section is purely expository. We give a brief survey for the general framework provided by Reeder in \cite{R}, which is fundamental  for the study of various branching problems related to Deligne-Lusztig characters of finite groups of Lie type. 

\subsection{Maximal tori}
Let $G$ be a connected reductive $\bar{\Ff}$-algebraic group defined over $\Ff$, with Frobenius $F$. Let $T$ be an $F$-stable maximal torus in $G$, with Weyl group $W_G(T)=N_G(T)/T$. Then $W_G(T)^F=N_G(T)^F/T^F$ by the Lang-Steinberg theorem. Let $s\in G^F$ be a semisimple element, and denote by $G_s := C_G(s)^\circ$ the identity component of the centralizer 
$C_G(s)$ of $s$ in $G$. Put
\[
N_G(s, T)^F :=\Set{\gamma\in G^F | s^\gamma\in T},
\]
where $s^\gamma:=  \gamma^{-1} s \gamma$. Then $G_s^F\times N_G(T)^F$ acts on $N_G(s, T)^F$, and we put
\[
\overline{N}_G(s, T)^F: = G_s^F\backslash N_G(s, T)^F.
\]

A formula for $|\overline{N}_G(s, T)^F|$ is given in \cite{R} as follows. Put $W_G:=W_G(T_0)$ for a fixed $F$-stable maximal torus $T_0$ in $G$ that is contained in an $F$-stable Borel subgroup of $G$. One can associate to $T$ is a cohomology class 
\[
{\rm cl}(T, G) \in H^1(F, W_G).
\]
Define $W_{G_s}$ in a similar way. A map 
\begin{equation} \label{jmap}
j_{G_s}: H^1(F, W_{G_s}) \to H^1(F, W_G)
\end{equation} 
is defined in \cite{R}, which sends the class of an $F$-stable maximal torus in $G_s$  to the class of the same torus in $G$, noting that $G_s$ and $G$ are connected groups of  the same absolute rank.
Denote by $\CT(G)$ the set of all $F$-stable maximal tori in $G$, and for $\omega\in H^1(F, W_G)$ put
\[
\CT_\omega(G) := \Set{T\in \CT(G) | {\rm cl}(T, G)=\omega}.
\]
By \cite[Corollary 2.3]{R}, for $T\in \CT_\omega(G)$ the set $N_G(s, T)^F\neq\varnothing$ if and only if $j_{G_s}^{-1}(\omega)\neq \varnothing$, in which case
\begin{equation} \label{Nbar}
|\overline{N}_G(s, T)^F| = \sum_{ \upsilon \in j_{G_s}^{-1}(\omega) } \frac{|W_G(T)^F|}{|W_{G_s}(T_\upsilon)^F|},
\end{equation} 
where $T_\upsilon$ is an arbitrary member in $\CT_\upsilon(G_s)$, for each $\upsilon \in j_{G_s}^{-1}(\omega)$. 

\subsection{Deligne-Lusztig characters}
An element $g\in G^F$ has the Jordan decomposition $g=su$, where $s\in G^F$ is semisimple and $u\in G^F_s$ is unipotent.  Let $T\in \CT_\omega(G)$ and let $\chi \in \Irr(T^F)$.  The virtual character $R^G_{T,\chi}$ of $G^F$ has the reduction formula (\cite{DL})
\[
R^G_{T,\chi}(su) = \sum_{\bar{\gamma}\in \overline{N}_G(s, T)^F}{}^\gamma \chi(s) Q^{G_s}_{{}^\gamma T}(u), 
\]
where $\gamma\in N_G(s, T)^F$ is a representative of $\bar{\gamma}$, ${}^\gamma T:=\gamma T\gamma^{-1}$,  ${}^{\gamma}\chi:=\chi\circ \Ad(\gamma^{-1})\in \Irr({}^\gamma T^F)$, and $Q^{G_s}_{{}^\gamma T}$ denotes the Green function. Breaking $\overline{N}_G(s, T)^F$ into $W_G(T)^F$-orbits, 
\[
R^G_{T,\chi}(su) = \sum_{\upsilon\in j_{G_s}^{-1}(\omega)} Q^{G_s}_{T_\upsilon}(u) \chi_\upsilon(s),  
\]
where we define $\CO_\upsilon$ to be the $W_G(T)^F$-orbit in $\overline{N}_G(s, T)^F$ corresponding to $\upsilon$ as in \cite[Lemma 2.2]{R}, and 
\begin{equation} \label{chi-O}
\chi_\upsilon:= \sum_{\bar{\gamma}\in \CO_\upsilon}{}^\gamma\chi.
\end{equation}
Note that $\chi_\upsilon$ is a well-defined function on $Z(G_s)^F$, where $Z(G_s)$ denotes the center of $G_s$. It turns out that
\begin{equation} \label{DL}
R^G_{T,\chi}(zu) = \sum_{\upsilon\in j_{G_s}^{-1}(\omega)} Q^{G_s}_{T_\upsilon}(u) \chi_\upsilon(z),\quad \textrm{if }G_z=G_s.  
\end{equation}
For later use, the value $\chi_\upsilon(s)$ can be also unfolded as 
\begin{equation} \label{chi-UF}
\chi_\upsilon(s) = \frac{1}{|W_{G_s}(T_\upsilon)^F|} \sum_{x\in W_G(T)^F} {}^{\gamma x}\chi(s),
\end{equation} 
where $\gamma$ is an arbitrary  element of $N_G(s, T)^F$ such that $\bar{\gamma}\in \CO_\upsilon$. 

\subsection{Multiplicity formula} Let $S\in \CT(G)$. Assume that $f: G^F\to \BC$ is a virtual character,  supported on the set of  elements $g\in G^F$ whose Jordan decomposition $g=su$ satisfies that ${\rm Ad}(G^F)\cdot s \cap S\neq\varnothing$. Let $G^{\rm upt}_s$ be the set of unipotent elements of $G_s$, and let $\CU(G^F_s)$ be the finite set of $\Ad(G_s^F)$-orbits  in $(G^{\rm upt}_s)^F$. By \cite[(5.2)]{R}, 
\begin{equation}  \label{mult1}
\frac{1}{|G^F|}\sum_{g\in G^F} f(g) = \sum_{s\in S^F} \frac{1}{| \overline{N}_G(s, S)^F|} \sum_{[u]\in \CU(G^F_s)} \frac{f(su)}{|C_{G_s}(u)|}.
\end{equation}

Let $I(S)$ be an index set for the  set of subgroups $\Set{G_s | s\in S}$, which is finite. For $\iota\in I(S)$, let $G_\iota$ be the corresponding connected centralizer, and put
\[
S_\iota :=\Set{s\in S | G_s = G_\iota},
\]
so that $S$ has a finite partition 
\[
S = \bigsqcup_{\iota\in I(S)} S_\iota.
\]
The Frobenius $F$ acts on $S$, hence on $I(S)$ as well. Thus \eqref{mult1} has a refinement 
\begin{equation} \label{mult2}
\frac{1}{|G^F|} \sum_{g\in G^F}f(g) = \sum_{\iota\in I(S)^F}  \frac{1}{| \overline{N}_G(\iota, S)^F|}  \sum_{s\in S_\iota^F,  \, [u]\in \CU(G^F_\iota)}   \frac{f(su)}{|C_{G_s}(u)|},
\end{equation}
where $ \overline{N}_G(\iota, S)^F =  \overline{N}_G(s, S)^F$ for any $s\in S_\iota$. 

\subsection{Progression of Frobenius} \label{ss-PF} Recall the arithmetic progressions $\CP_m$, $m\in \BZ_+$, which start from $\nu=1$. We have the following general  remarks from \cite[\S5.4, \S5.5]{R}. 

\subsubsection{} \label{ss3.4.1} Let $\CB_G$ be the flag variety of $G$. For a unipotent class $[u]\in \CU(G^F)$,  let $\CB_G^u$ be the variety of $u$-fixed points. Then $\CB^u_G$ is equi-dimensional, of dimension 
\[
d_G(u): = \dim \CB^u_G = \frac{1}{2} (\dim C_G(u) - \overline{\rm rk} \, G),
\]
where $ \overline{\rm rk} \, G$ denotes the absolute rank of $G$. 

Assume that $p$ is a good prime for $G$. For the classical groups considered in this paper, it amounts to the condition that $p$ is odd. For $T\in \CT(G)$, denote by $Q^G_{T, \nu}$ the Green function for $T$ on $G^{F^\nu}$. Then there exist $m\in \BZ_+$ and Green polynomials $Q_{\omega, u}(t)\in \BZ[t]$ of degree at most $d_G(u)$,  where $\omega\in H^1(F, W_G)$ and $[u]\in \CU(G^F)$, such that the following hold for all $\nu\in \CP_m$:
\begin{itemize}

\item $F^\nu=F$ on $W_G$, and the class ${\rm cl}(T, G)$ is the same with respect to $F$ or $F^\nu$;

\item $F^\nu =F$ on $A_G(C)$, where $C={\rm Ad}(G)\cdot u$ and $A_G(C)$ is the component group of the centralizer of some $F$-fixed element in $C$, and the class of $u$ in $G^F$ or $G^{F^\nu}$ corresponds to the same class in $H^1(F, A_G(C))$; 

\item $Q^G_{T, \nu}(u) = Q_{\omega, u}(q^\nu)$, where $\omega = {\rm cl}(T, G)$.

\end{itemize}
Moreover, if $u=1$ then the leading term of $Q_{\omega, 1}(t)$ is $\epsilon_G(\omega) t^{d_G(1)}$, where 
$\epsilon_G(\omega)=(-1)^{{\rm rk}\, G +{ \rm rk} \, T}$ for $T\in \CT_\omega(G)$,  ${\rm rk}$ denotes the $\Ff$-rank, and $d_G(1) = \dim\CB_G$ is the number of positive roots of $G$.

\subsubsection{}  \label{ss3.4.2} There exist $m\in \BZ_+$, and polynomials $P_{\iota, u}\in\BZ[t]$ of degree equal to $\dim C_{G_\iota}(u)$ and leading coefficient 
$|A_\iota(u)|$, where $\iota\in I(S)^F$, $[u]\in \CU(G^F_\iota)$ and $A_\iota(u)$ is the component group of $C_{G_\iota}(u)$,  such that the following hold for all $\nu \in \CP_m$: 
\begin{itemize}

\item $F^\nu = F$ on $I(S)$, and the conditions on $m$ in Section \ref{ss3.4.1} hold for every $G_\iota$, $\iota\in I(S)$; 

\item
$| C_{G_\iota}(u)^{F^\nu}| = P_{\iota, u}(q^\nu)$;

\item
$
|\overline{N}_G(\iota, S)^{F^\nu}| = |\overline{N}_G(\iota, S)^F|.
$
\end{itemize}

\section{Multiplicity formula for Fourier-Jacobi models}

Assume from now on that $G$ is one of the classical groups $\Sp_{2n}$, $\RU_n$ or $\GL_n$ as before. In this section, following Reeder's method \cite{R} we derive a formula for  the multiplicity 
\[
M(1): =\langle R^G_{T, \chi} \otimes\omega_\psi^\vee, R^G_{S, \eta}\rangle_{G^F}
\]
associated to two Deligne-Lusztig characters, where $S, T\in \CT(G)$, $\chi\in \Irr(T^F)$ and $\eta\in \Irr(S^F)$. We outline the strategy as follows.

The method is to show that the integer valued function 
\begin{equation} \label{M}
M(\nu): = \langle R^G_{T, \chi^{(\nu)}} \otimes\omega_\psi^{(\nu), \vee}, R^G_{S, \eta^{(\nu)}}\rangle_{G^{F^\nu}}
\end{equation}
is of geometric type and has a finite limit as $\nu\to\infty$ along some arithmetic progression $\CP_m$ starting from $\nu=1$. Here 
$\chi^{(\nu)}:=\chi\circ N^T_\nu$ is a character of $T^{F^\nu}$, and likewise $\eta^{(\nu)}$ is a character of $S^{F^\nu}$.  This implies that $M$ is constant on $\CP_m$ by Lemma \ref{const}, and therefore 
\[
M(1) = \lim_{\nu\to\infty, \,  \nu\in\CP_m}M(\nu).
\]
The main result is stated as Theorem \ref{MF}, and an explicit formula for regular Deligne-Lusztig characters is given by \eqref{reg-MF}. As an application we prove a conjecture 
of Hiss and Schr\"oer \cite{HS} in type A, which is presented as Theorem \ref{HS-A}.

For convenience, we write $\nu\xrightarrow{\CP_m}\infty$ to indicate that $\nu\to\infty$ along $\CP_m$.  For two functions $A(\nu)$ and $B(\nu)$ defined on $\CP_m$, we write 
\[
A(\nu) \approx_{\nu, \CP_m} B(\nu)
\]
if $A(\nu) = B(\nu) C(\nu)$ for some function $C(\nu)$ which converges to 1 as $\nu\xrightarrow{\CP_m}\infty$.

\subsection{Multiplicity as a function of geometric type}  We first prove the following result. 
\label{sec41}
\begin{prop}\label{prop41}
Let $m\in\BZ_+$ be as in Section \ref{ss3.4.2}. Then  \eqref{M} defines  a function $M$ of geometric type on $\CP_m$.
\end{prop}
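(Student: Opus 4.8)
The plan is to unfold the inner product $M(\nu) = \langle R^G_{T,\chi^{(\nu)}}\otimes\omega_\psi^{(\nu),\vee}, R^G_{S,\eta^{(\nu)}}\rangle_{G^{F^\nu}}$ using the machinery of Section 3, and then exhibit each resulting piece as a function of geometric type on a suitable $\CP_m$, invoking the stability properties in Section \ref{ss3.4.2}. First I would expand $M(\nu)$ as $\frac{1}{|G^{F^\nu}|}\sum_{g\in G^{F^\nu}} R^G_{T,\chi^{(\nu)}}(g)\,\overline{\omega_\psi^{(\nu)}(g)}\,\overline{R^G_{S,\eta^{(\nu)}}(g)}$. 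The key observation is that by the reduction formula \eqref{DL}, the function $g=su\mapsto R^G_{S,\eta^{(\nu)}}(g)$ is supported on those $g$ whose semisimple part $s$ is $G^{F^\nu}$-conjugate into $S$; so I can apply the Reeder-type summation formula \eqref{mult2} with $f$ the full integrand, localizing the sum over $\iota\in I(S)^{F^\nu}$, over $s\in S_\iota^{F^\nu}$, and over unipotent classes $[u]\in\CU(G^{F^\nu}_\iota)$. For $\nu\in\CP_m$ with $m$ as in Section \ref{ss3.4.2}, the index sets $I(S)^{F^\nu}=I(S)^F$, the normalizer counts $|\overline{N}_G(\iota,S)^{F^\nu}|$, and the orbit-parametrizing data are all independent of $\nu$, so the outer combinatorial structure of the sum is frozen.

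Next I would treat each factor in the summand. The Deligne-Lusztig values $R^G_{T,\chi^{(\nu)}}(su)$ and $R^G_{S,\eta^{(\nu)}}(su)$ decompose via \eqref{DL} into $\sum_\upsilon Q^{G_s}_{T_\upsilon,\nu}(u)\,\chi^{(\nu)}_\upsilon(s)$ and similarly for $\eta$; by Section \ref{ss3.4.1} the Green function values $Q^{G_s}_{T_\upsilon,\nu}(u) = Q_{\omega,u}(q^\nu)$ are polynomials in $q^\nu$, hence of geometric type, and the twisted character sums $\chi^{(\nu)}_\upsilon(s)$, $\eta^{(\nu)}_\upsilon(s)$ become honest constants once we fix the (now $\nu$-independent) orbit structure, since $\chi^{(\nu)} = \chi\circ N^T_\nu$ and the norm map matches things up — this is exactly the content of Remark \ref{weil-rmk}(1) for the torus characters. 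The centralizer orders $|C_{G_s}(u)| = |C_{G_\iota}(u)^{F^\nu}| = P_{\iota,u}(q^\nu)$ are polynomials in $q^\nu$ by Section \ref{ss3.4.2}, so their reciprocals are of geometric type (the denominators being nonzero for all $\nu\in\CP_m$), and $|G^{F^\nu}|$ is likewise a polynomial in $q^\nu$ with no zeros on $\CP_m$. Summing a fixed finite number of such products and quotients stays within the class of functions of geometric type, by the closure properties noted after Lemma \ref{const}.

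The one genuinely new ingredient — and the main obstacle — is the Weil-representation factor $\overline{\omega_\psi^{(\nu)}(g)}$, which is \emph{not} a polynomial in $q^\nu$ and not uniform. The plan here is to invoke Theorem \ref{geo-weil}: for $G=\Sp(V)$ the value $\omega_\psi^{(\nu)}(g) = f^{\CF,(\nu)}(g)$ comes from the faisceaux-fonction correspondence applied to a fixed geometrically irreducible sheaf $\CF$ on $G\times V$, independent of $\nu$. For $G=\RU_n$ or $\GL_n$ one restricts the symplectic Weil sheaf along the embedding $G\hookrightarrow\Sp(V)$, so the same statement holds after pullback. Therefore the partial sum $\sum_{s,[u]}\frac{(\text{polynomial in }q^\nu)}{(\text{polynomial in }q^\nu)}\cdot\overline{\omega_\psi^{(\nu)}(su)}$, when reassembled, can be recognized as a global trace of geometric Frobenius $F^\nu$ on the $\ell$-adic cohomology of a suitable variety (built from $G$, $V$, the relevant flag varieties of the centralizers $G_\iota$, and the fixed-point loci encoding the Green functions and the support conditions) with coefficients in a fixed complex of sheaves obtained by tensoring $\CF$ with the constant sheaf and suitable pushforwards; by the Grothendieck–Lefschetz trace formula \eqref{trace} and the discussion of Section \ref{ss2.1}, such a trace is manifestly of geometric type. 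The delicate point to get right is that the \emph{torus-character} part $\chi^{(\nu)}_\upsilon(s)$ and $\eta^{(\nu)}_\upsilon(s)$, together with the quadratic characters $\vartheta_T$ lurking in the Weil character, must be packaged into a lisse rank-one sheaf (a product of Kummer/Artin-Schreier-type sheaves on the torus $S$) so that the whole summand over $s\in S_\iota^{F^\nu}$ genuinely arises from the trace-formula framework; this requires a careful bookkeeping of which characters are being summed, but it is a routine — if notation-heavy — geometrization. Assembling these pieces and checking the denominators are nonvanishing on $\CP_m$ completes the proof that $M$ is of geometric type on $\CP_m$.
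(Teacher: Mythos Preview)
Your approach is essentially the same as the paper's, and correct in outline. Two points where the paper is crisper. First, the claim that $\chi^{(\nu)}_\upsilon(s)$ and $\eta^{(\nu)}_\varsigma(s)$ ``become honest constants'' is not right as stated --- they are functions of $s\in S_\iota^{F^\nu}$, and you rightly walk this back in your last paragraph by packaging them as character sheaves; the reference to Remark~\ref{weil-rmk}(1) is also misplaced (that remark concerns $\vartheta_T^{(\nu)}$ inside the Weil character, not $\chi,\eta$). Second, there is no need to build a large variety encoding flag varieties and Green functions. The paper simply writes
\[
M(\nu)=\sum_\alpha\Psi_\alpha(q^\nu)\,\Theta_\alpha(\nu),
\]
where $\alpha=(\iota,[u],\upsilon,\varsigma)$ runs over a $\nu$-independent finite set, $\Psi_\alpha(t)$ is the rational function collecting the Green polynomials, $|\overline{N}_G(\iota,S)^F|$ and $P_{\iota,u}(t)$, and only the inner sum
\[
\Theta_\alpha(\nu)=\sum_{s\in S_\iota^{F^\nu}}\overline{\chi_\upsilon^{(\nu)}(s)}\,\eta_\varsigma^{(\nu)}(s)\,\omega_\psi^{(\nu)}(su)
\]
requires the trace formula --- applied over the single locally closed subvariety $S_\iota\subset Z_\iota$ with sheaf $u^*\CF\otimes\CL_{{}^\gamma\chi}^\vee\otimes\CL_{{}^\delta\eta}$, where $u^*$ is pullback along right translation by the fixed unipotent $u$ and the $\CL$'s are the rank-one character sheaves on $Z_\iota$. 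No separate handling of $\vartheta_T$ is needed, since the full Weil character (at the non-semisimple point $su$) is already encoded in $\CF$.
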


\begin{proof}
Define the function 
\[
f^{(\nu)}: G^{F^\nu} \to \BC,\quad
f(g) = \overline{R^G_{T, \chi^{(\nu)}}(g)}\cdot R^G_{S,\eta^{(\nu)}}(g)\cdot \omega_\psi^{(\nu)}(g).
\]
Then we have that
\[
M(\nu) = \frac{1}{|G^{F^\nu}|} \sum_{g\in G^{F^\nu}} f^{(\nu)}(g). 
\]
By \eqref{DL} and \eqref{mult2}, we find that
\[
M(1) =  \sum_{\iota\in I(S)^F, \, [u]\in \CU(G^F_\iota)} \sum_{\upsilon, \, \varsigma} \frac{\overline{Q^{G_\iota}_{T_\upsilon}(u)} Q^{G_\iota}_{S_\varsigma}(u)}{| \overline{N}_G(\iota, S)^F| \,
| C_{G_\iota}(u)^F| } \sum_{s\in S^F_\iota} \overline{\chi_{\upsilon}(s)} \cdot \eta_{\varsigma}(s) \cdot \omega_{\psi}(su),
\]
where the middle sum is over $\upsilon \in j^{-1}_{G_\iota}({\rm cl}(T, G))$ and $\varsigma\in j^{-1}_{G_\iota}({\rm cl}(S, G))$. 

Denote $\alpha = (\iota, u, \upsilon, \varsigma)$ the summation indices of quadruples where
\begin{equation} \label{alpha}
\iota\in I(S)^F, \quad [u] \in \CU(G^F_\iota),\quad \upsilon \in j^{-1}_{G_\iota}({\rm cl}(T, G)), \quad \varsigma\in j^{-1}_{G_\iota}({\rm cl}(S, G)).
\end{equation} 
They are unchanged if $F$ is replaced by $F^\nu$ with $\nu\in \CP_m$, thus for such $\nu$ we have that
\[
M(\nu) = \sum_\alpha \Psi_\alpha(q^\nu) \Theta_\alpha(\nu),
\]
where
\begin{equation} \label{Psi}
\Psi_\alpha(t)  := \frac{\overline{Q^{G_\iota}_{\upsilon, u}(t)} Q^{G_\iota}_{\varsigma, u}(t)}{|\overline{N}_G(\iota, S)^F| \, | P_{\iota, u}(t)|}
\end{equation} 
is a rational function of $t$, and 
\begin{equation} \label{Theta}%\label{eq:4.4}
\Theta_\alpha(\nu) :=  \sum_{s\in S^{F^\nu}_\iota} \overline{\chi_{\upsilon}^{(\nu)}(s)}\cdot \eta_{\varsigma}^{(\nu)}(s)\cdot\omega_{\psi}^{(\nu)}(su). 
\end{equation}
Here $Q^{G_\iota}_{\upsilon, u}(t)$ and $Q^{G_\iota}_{\varsigma, u}(t)$ are the Green polynomials from Section \ref{ss3.4.1}, and $P_{\iota, u}(t)$ is the polynomial from Section \ref{ss3.4.2}.
Note that 
\[
S_\iota\subset Z_\iota:= Z(G_\iota).
\]
Recall from \eqref{chi-O} that  $\chi_{\upsilon}$ and $\eta_{\varsigma}$ are sums of characters on $Z_\iota$,  thus the values $\chi_{\upsilon}^{(\nu)}(s)$ and  $\eta_{\varsigma}^{(\nu)}(s)$, $s\in S_\iota^{F^\nu}$, are the local traces of $F^\nu_s$ on the corresponding sheaves. By the geometrization Theorem \ref{geo-weil} and the Grothendieck-Lefschetz trace formula,  each $\Theta_\alpha(\nu)$ is a function of geometric type on $\BZ_+$. More precisely,
 denote by $\CL_{{}^\gamma\chi}$ and $\CL_{{}^{\delta}\eta}$ the rank one sheaves 
on $Z_\iota$ corresponding to the characters ${}^\gamma\chi$ and ${}^{\delta}\eta$ respectively,
where $\overline{\gamma}\in \CO_\upsilon$ and $\overline{\delta}\in \CO_{\varsigma}$. Then 
\[
\sum_{s\in S^{F^\nu}_\iota} \overline{{}^\gamma\chi^{(\nu)}(s)}\cdot {}^{\delta}\eta^{(\nu)}(s)\cdot \omega_{\psi}^{(\nu)}(su) ={\rm Tr}\left(F^\nu, R\Gamma_c(S_\iota,
u^*\CF\otimes \CL_{{}^\gamma\chi}^\vee\otimes \CL_{{}^{\delta}\eta})\right),
\]
where $\CF$ is the sheave given in Theorem \ref{geo-weil}, and $u: G\to G, x\mapsto xu$ is the right translation by $u$.  
It follows that $M(\nu)$ is a function of geometric type on $\CP_m$. 
\end{proof}

Thus it remains to show that, after further increasing the divisibility of $m$ if necessary, $M(\nu)$ converges as $\nu\xrightarrow{\CP_m}\infty$, and evaluate its limit. 

\subsection{A partition of $S$} \label{PS} For the classical groups $G$ of our concern, the centralizers $C_G(s)$, $s\in S$, are connected hence $G_s= C_G(s)$. We first give the description of $G_s$ uniformly, following e.g. \cite{AMR}. 

Recall that $G$ acts on the $2n$-dimensional symplectic space $V$. Define a group 
\[
\Gamma: = \Gal(\bar{\Ff}/\Ff)\times \langle \imath \rangle,
\]
where $\imath$ is the involution  $\imath: \bar{\Ff}^\times \to \bar{\Ff}^\times$, $x\mapsto x^{-1}$. Then $\Gamma$ naturally acts on $\bar{\Ff}^\times$. Denote by $\Lambda_s \subset \bar{\Ff}^\times$ the set of distinguished eigenvalues of $s$ acting on $V$.
%and for $a\in \Lambda_s$ denote by $m_a:=\dim\Ker(s-a\cdot 1_V)$ its multiplicity. 
Clearly $\Gamma$ acts on $\Lambda_s$, and for $a\in \Lambda_s$ denote by $[a]$ the $\Gamma$-orbit of $a$. 
%Let us call $[a]$ symmetric if $[a^{-1}]=[a]$, and asymmetric otherwise. 
Then we have that
\[
G_s = \prod_{[a]\in \Lambda_s/\Gamma} G_{s, [a]},
\]
where 
\begin{itemize}

\item $G_{s, [1]}$ and $G_{s, [-1]}$ are classical groups of the same type as $G$;

\item $G_{s, [a]}$, $[a]\neq[\pm 1]$, are general linear groups or unitary groups.

% \item ${\rm rk \,}G_{s, [a]}$ equals $m_a/2$ if $[a]$ is symmetric, and equals $m_a$ otherwise, where ${\rm rk}$ is the $\Ff$-rank.
\end{itemize}
In the above, $G_{s, [\pm1]}$ is interpreted as the trivial group if $\pm 1\not\in \Lambda_s$. Let us write 
\begin{equation} \label{G'}
G_s' : = \prod_{[a]\neq [ 1]} G_{s,[a]},
\end{equation} 
so that $G_s = G_s' \times G_{s, [1]}$.
%施放做的改动从这里开始

Let $J(S)$ be an index set for the following set of symplectic subspaces of $V$:
\[
\Set{V^s | s\in S}.
\]
Then $J(S)$ is finite of cardinality $|J(S)|= 2^n$. The Frobenius $F$ acts naturally on $J(S)$, and it can be shown that the set $J(S)^F$ of fixed points corresponds to
$$ \Set{ V^s | s \in S^F}.
$$
\begin{comment}
This is affirmative, this can be seen as follows (recall $q $ is odd)
\begin{itemize}
    \item it is clear that any $s\in S^F$ gives an element in
    $J_0(S)^F=J(S)$ corresponding to $V^s$
    \item on the other hand, if $s\in S$ and $V^s \in J(S)$
    then the eigenspace of $s$ corresponding to the eigenvalue $1$ is $F$-stable. We define $t \in S^F$ by forcing $t$ to act trivially on $V^s$ and to act as $-1$ on the eigenspaces of $s$ corresponding to eigenvalues other than $1$. Then we see $V^t=V^s$. 
\end{itemize}
\end{comment}
 Following \eqref{TF} and \Cref{AT}, if we assume that $S^F$ is of the form
\begin{equation} \label{SF}
S^F\cong \prod_j (\Ff_j^\times)^{\mu_j}\times (\Ff^1_{2j})^{\mu_j'},\quad |\mu|+|\mu'|=n,
\end{equation}
then $J(S)^F$ can be taken to be the set of all subsets of $\prod_j [1, \mu_j]\times [1,\mu_j']$ hence $|J(S)^F|= 2^{\sum_j (\mu_j+\mu_j')}$. 
For $\jmath\in J(S)$, denote by $V^\jmath$ the corresponding symplectic subspace of $V$, and put
\[
S_\jmath := \Set{s\in S | V^s= V^\jmath}.
\]
Then we have a  partition 
\[
S= \bigsqcup_{\jmath\in J(S)} S_\jmath. 
\]
We now increase the divisibility of $m$ such that $F^m$ acts trivially on $J(S)$. Then for any
$\nu\in \CP_m$ it holds that
\[
S^{F^\nu}= \bigsqcup_{\jmath\in J(S)^F} S_\jmath^{F^\nu}.
\]
Denote by $G_\jmath$ the centralizer of $S_\jmath$ in $G$:
\begin{equation} \label{ZJ}
G_\jmath:= C_G(S_\jmath) = Z_\jmath \times G_{\jmath, [1]},
\end{equation}
where $Z_\jmath :=\overline{S_\jmath}\subset S$ can be viewed as  a maximal torus in $\Sp(V/V^\jmath)$, and $G_{\jmath, [1]} = G_{s, [1]}\subset \Sp(V^\jmath)$ for any $s\in S_\jmath$. 

Recall the index set $I(S)$.  For each $\iota\in I(S)^F$, let $J(S_\iota)\subset J(S)$ be the index set for the following set of symplectic subspaces of $V$:
\[
\Set{V^s | s\in S_\iota}.
\]
Note that by the description of $G_s$,  we have that $|J(S_\iota)^F|\leq 2$ if $G=\Sp_{2n}$, and $|J(S_\iota)^F|\leq |\Lambda_s/\Gamma|$, $s\in S_\iota$, if $G=\RU_n$ or $\GL_n$. For $\jmath\in J(S_\iota)$,  put
\[
S_{\iota, \jmath}:=S_\iota\cap S_\jmath = \Set{s\in S_\iota | V^s=V^\jmath}.
\]
This gives a finite partition 
\begin{equation} \label{S-part}
S_\iota = \bigsqcup_{\jmath\in J(S_\iota)}S_{\iota, \jmath},
\end{equation} 
which induces a partition for each $\nu\in \CP_m$:
$$
S_\iota^{F^\nu} = \bigsqcup_{\jmath\in J(S_\iota)^F}S_{ \iota, \jmath}^{F^\nu}.
$$
%施放做的改动在这里结束

For each $\jmath\in J(S_\iota)^F$ we have a decomposition 
\[
G_\iota=G'_{\iota, \jmath} \times G_{ \jmath, [1]},
\]
where $G'_{\iota, \jmath} =G'_s$  for any $s\in S_{ \iota, \jmath}$. Note that $G'_\jmath$  can be naturally viewed as a subgroup 
of  $\Sp(V/V^\jmath)$, and we have the inclusion 
\begin{equation} \label{incl}
S_{ \iota, \jmath} \subset Z_{\iota, \jmath}: = Z(G_{\iota, \jmath}') \subset Z_{\jmath}\subset G_{\iota, \jmath}'.
\end{equation}

\subsection{Multiplicity formula} 

\subsubsection{} From Section \ref{ss-PF}, it follows that 
\[
\deg \Psi_\alpha(t) \leq - \overline{\rm rk} \, G_\iota =  - \overline{\rm rk}\, G= -n.
\]
By \eqref{S-part}, we can write the function \eqref{Theta} as
\[
\Theta_\alpha(\nu ) = \sum_{\jmath\in J(S_\iota)} \Theta_{\alpha, \jmath} (\nu),
\]
where
\[
\Theta_{\alpha, \jmath}(\nu): = \sum_{s\in S^{F^\nu}_{\iota, \jmath}} \overline{\chi_{\upsilon}^{(\nu)}(s)}\cdot \eta_{\varsigma}^{(\nu)}(s)\cdot \omega_{\psi}^{(\nu)}(su).
\]
By Remark \ref{weil-rmk} (2), at $s\in S^{F^\nu}_\jmath$ we have 
\begin{equation} \label{est-weil}
\left| \omega_{\psi}^{(\nu)}(su)\right| = q^{\frac{\nu}{2}\dim V^{su}}\leq  q^{\frac{\nu}{2}\dim V^{s}} = q^{\nu\cdot \overline{\rm rk}\, G_{\jmath, [1]}}.
\end{equation}
This gives the estimate 
\begin{equation} \label{est-theta}
\left| \Theta_{\alpha, \jmath}(\nu)\right| \leq \left| Z_{ \iota, \jmath}^{F^\nu} \right| \cdot q^{\nu \cdot \overline{\rm rk}\, G_{\jmath, [1]}} \approx_{\nu, \CP_m} q^{\nu(\overline{\rm rk}\, Z_{\iota, \jmath} + \overline{\rm rk}\, G_{\jmath, [1]})} \leq q^{\nu \cdot \overline{\rm rk}\, G_\iota} = q^{\nu n}.
\end{equation} 
It follows that
\[
\Psi_\alpha(\nu) \Theta_{\alpha, \jmath}(\nu)\to 0,\quad {\rm as } \ \nu\xrightarrow{\CP_m}\infty
\]
unless 
\[
\overline{\rm rk}\, Z_{\iota, \jmath} = \overline{\rm rk}\, G_{\iota, \jmath}',
\] 
which is equivalent to that $G_{\iota, \jmath}' = Z_\jmath$ in view of \eqref{incl}.
At this point, we make the following 

\begin{rmk} \label{rmk-reg}

\begin{enumerate}
\item For $\iota\in I(S)^F$ and $\jmath\in J(S_\iota)^F$, the following are equivalent:

\begin{itemize}
\item $G_{\iota, \jmath}' = Z_\jmath$,
\item $G_\iota = G_\jmath$,
\item  $S_{\iota, \jmath}= Z_{\jmath}^\reg$,
\end{itemize}
where $Z_{\jmath}^\reg$ is  the subset of elements of $Z_\jmath$ that are regular in   ${\rm Sp}(V/V^\jmath)$.
\item For every $\jmath\in J(S)^F$, there is a unique $\iota\in I(S)^F$ such that $\jmath\in J(S_\iota)^F$ and the equivalent conditions in {\rm (1)} hold. This gives a map
\[
\phi: J(S)^F\to I(S)^F
\]
such that $G_\jmath = G_{\phi(\jmath)}$, $\jmath\in J(S)^F$.
\end{enumerate}
\end{rmk}

\subsubsection{} Assume that  $G_\iota=G_\jmath$, i.e. $\iota= \phi(\jmath)$ where
$\phi$ is the map in Remark \ref{rmk-reg} (2). Then  $[u]\in \CU(G^F_\iota) = \CU(G^F_{\jmath,[1]})$. In this case, if $u\neq 1$ then for  $s\in S_{\iota, \jmath} = Z_\jmath^\reg$ we have that
\[
\dim V^{su}=\dim (V^s\cap V^u) < \dim V^s.
\]
It follows from \eqref{est-weil} and a similar estimate as \eqref{est-theta} that
\[
\Psi_\alpha(\nu) \Theta_{\alpha, \jmath}(\nu)\to 0,\quad {\rm as } \ \nu\xrightarrow{\CP_m}\infty.
\]

Thus we further assume that $u=1$. By the character formula \eqref{char}, at $s\in Z_\jmath^{\reg, F^\nu}$  we have 
\[
\omega_\psi^{(\nu)}(s) = (-1)^{l(Z_\jmath^{F^\nu}, s)} \vartheta_{Z_\jmath}^{(\nu)}(s) q^{\frac{\nu}{2} \dim V^s} .
\]
The properties of $m$ listed in Section \ref{ss3.4.2} ensure that $l(Z_\jmath^{F^\nu}, s) = l(Z^F_\jmath, s)$ for all $\nu\in \CP_m$, which is the number of anisotropic factors of $Z_\jmath$. Denote this number by $l(Z_\jmath)$. By Remark \ref{rmk-reg} and the fact that regular elements are Zariski open dense, we find that
\begin{align*}
\Theta_{\alpha, \jmath}(\nu) &  \approx_{\nu, \CP_m} (-1)^{l(Z_\jmath)}  q^{\nu\cdot \overline{\rm rk}\, G_{\jmath, [1]}} \left| Z_\jmath^{F^\nu}\right| \langle \chi_\upsilon^{(\nu)}\vartheta_{Z_\jmath}^{(\nu)}, \eta_\varsigma^{(\nu)}\rangle_{Z_\jmath^{F^\nu}} \\
& \approx_{\nu, \CP_m} (-1)^{l(Z_\jmath)}  \langle \chi_\upsilon \vartheta_{Z_\jmath}, \eta_\varsigma\rangle_{Z_\jmath^F} q^{\nu n} .
\end{align*}
On the other hand, by Section \ref{ss-PF} when $u=1$ we have $\deg\Psi_\alpha(t)= -n$ and 
\[
\Psi_\alpha(q^\nu) \approx_{\nu, \CP_m} \frac{(-1)^{{\rm rk}\, T +{\rm rk}\, S}}{ \left| \overline{N}_G(\iota, S)^F \right| } q^{-\nu n},
\]
noting that $A_\iota(1)$ is trivial. Thus
\begin{equation} \label{summand}
\Psi_\alpha(q^\nu) \Theta_{\alpha, \jmath}(\nu)  \to  \frac{(-1)^{{\rm rk}\, T +{\rm rk}\, S + l(Z_\jmath)}}{ \left| \overline{N}_G(\iota, S)^F \right| }
\langle \chi_\upsilon \vartheta_{Z_\jmath}, \eta_\varsigma\rangle_{Z_\jmath^F}\quad {\rm as} \ 
\nu\xrightarrow{\CP_m}\infty. 
\end{equation} 
Since in this case $G_\iota = G_\jmath = Z_\jmath \times G_{\jmath, [1]}$, we observe that 
\[
\left| j^{-1}_{G_\jmath }({\rm cl}(S, G)) \right| =1 \quad{\rm and}\quad  \left| j^{-1}_{G_\jmath }({\rm cl}(T, G)) \right|   \leq 1,
\]
with equality hold in the latter if and only if $G_\jmath$ contains a $G^F$-conjugate of $T$. In this case we denote the unique elements of $j^{-1}_{G_\jmath }({\rm cl}(T, G))$ and $j^{-1}_{G_\jmath }({\rm cl}(S, G))$ by $\upsilon(\jmath)$ and $\varsigma(\jmath)$ respectively. 

\subsubsection{} Now we can present the multiplicity formula.

\begin{thm} \label{MF} Let the definitions be as above. Then we have the multiplicity 
\begin{align*}
\langle R^G_{T, \chi} \otimes\omega_\psi^\vee, R^G_{S, \eta}\rangle_{G^F} & = \sum_{\jmath\in J(S, T)}\frac{(-1)^{{\rm rk}\, T +{\rm rk} \, S+ l(Z_\jmath)}}{|\overline{N}(\phi(\jmath), S)^F|}
\langle \chi_{\upsilon(\jmath)}\vartheta_{Z_\jmath}, \eta_{\varsigma(\jmath)}\rangle_{Z_\jmath^F} \\
& = \sum_{\jmath\in J(S, T)}\frac{(-1)^{{\rm rk}\, T +{\rm rk} \, S+ l(Z_\jmath)}}{|W_{G_\jmath}(T_\jmath)^F| \, |W_G(S)^F|}\sum_{w\in W_G(T_\jmath)^F, \, v\in W_G(S)^F} \langle {}^w\chi_\jmath \vartheta_{Z_\jmath}, {}^v\eta\rangle_{Z_\jmath^F},
\end{align*} 
where $J(S, T):=\Set{\jmath\in J(S)^F | j_{G_\jmath}^{-1}({\rm cl}(T, G)) \neq \varnothing }$, $j_{G_\jmath}^{-1}({\rm cl}(T, G)) =\{\upsilon(\jmath)\}$, $j_{G_\jmath}^{-1}({\rm cl}(S, G)) = \{\varsigma(\jmath)\}$, and $(T_\jmath, \chi_\jmath)$ is any $G^F$-conjugate of $(T, \chi)$ such that 
$T_\jmath\subset G_\jmath$. 
\end{thm}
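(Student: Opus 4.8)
The plan is to run the limiting argument that the preceding subsections have been building toward: show that the integer-valued function $M(\nu)$ of \eqref{M}, which the preceding Proposition identifies as a function of geometric type on a sufficiently divisible $\CP_m$, has a finite limit as $\nu\xrightarrow{\CP_m}\infty$, so that \Cref{const} forces it to be constant on $\CP_m$; then $M(1)=\langle R^G_{T,\chi}\otimes\omega_\psi^\vee,R^G_{S,\eta}\rangle_{G^F}$ equals that limit, and it remains only to read the limit off.

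First I would fix $m$ divisible and even enough that the structural conditions of Sections \ref{ss3.4.1}, \ref{ss3.4.2} and \ref{PS} hold simultaneously, and record that $M(\nu)$ is integer valued because it is the ordinary inner product of the two virtual characters $R^G_{T,\chi^{(\nu)}}\otimes\omega_\psi^{(\nu),\vee}$ and $R^G_{S,\eta^{(\nu)}}$ of $G^{F^\nu}$. Then, using the decomposition $M(\nu)=\sum_\alpha\sum_{\jmath\in J(S_\iota)^F}\Psi_\alpha(q^\nu)\,\Theta_{\alpha,\jmath}(\nu)$ over the quadruples $\alpha=(\iota,u,\upsilon,\varsigma)$ of \eqref{alpha}, I would run through the estimates: by \eqref{est-weil}, \eqref{est-theta} and the degree bound $\deg\Psi_\alpha\le-n$ from Section \ref{ss-PF}, each summand tends to $0$ unless $\overline{\rm rk}\,Z_{\iota,\jmath}=\overline{\rm rk}\,G'_{\iota,\jmath}$, which by \eqref{incl} and part (1) of \Cref{rmk-reg} means $G_\iota=G_\jmath$, i.e.\ $\iota=\phi(\jmath)$ by part (2); it then tends to $0$ unless $u=1$; and a contributing pair $(\upsilon,\varsigma)$ exists only when $j^{-1}_{G_\jmath}({\rm cl}(T,G))\ne\varnothing$, i.e.\ $\jmath\in J(S,T)$, whereupon $(\upsilon,\varsigma)=(\upsilon(\jmath),\varsigma(\jmath))$ is forced. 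Hence the surviving summands are indexed precisely by $\jmath\in J(S,T)$, each with limit \eqref{summand}, and summing produces the first displayed formula of the theorem.

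For the second formula I would unfold each factor attached to a fixed $\jmath\in J(S,T)$. Since $S\supseteq S_\jmath$ is abelian, $S\subseteq C_G(S_\jmath)=G_\jmath$ is a maximal torus of $G_\jmath$ of class $\varsigma(\jmath)$; as $j^{-1}_{G_\jmath}({\rm cl}(S,G))=\{\varsigma(\jmath)\}$ is a singleton, \eqref{Nbar} collapses to $|\overline{N}_G(\phi(\jmath),S)^F|=|W_G(S)^F|/|W_{G_\jmath}(S)^F|$. Taking $T_{\upsilon(\jmath)}=T_\jmath$ and $S_{\varsigma(\jmath)}=S$, the unfolding \eqref{chi-UF} writes $\chi_{\upsilon(\jmath)}$ and $\eta_{\varsigma(\jmath)}$ as the $W_G(T_\jmath)^F$- and $W_G(S)^F$-averages of ${}^w\chi_\jmath$ and ${}^v\eta$ normalized by $|W_{G_\jmath}(T_\jmath)^F|^{-1}$ and $|W_{G_\jmath}(S)^F|^{-1}$. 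Substituting these three identities, the two copies of $|W_{G_\jmath}(S)^F|$ cancel and the second formula drops out.

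The step I expect to be the real obstacle — and the reason the whole apparatus of Sections \ref{sec-P}--\ref{ss-PF} is set up — is proving that $M(\nu)$ actually converges, i.e.\ that none of the summands diverge. This hinges on the sharp rank inequality $\overline{\rm rk}\,Z_{\iota,\jmath}+\overline{\rm rk}\,G_{\jmath,[1]}\le\overline{\rm rk}\,G_\iota=n$ from \eqref{incl} together with the product description of $G_s$, combined with the exact modulus $|\omega_\psi^{(\nu)}(su)|=q^{\frac{\nu}{2}\dim V^{su}}$ supplied by part (2) of \Cref{weil-rmk} and the degree control of the $\Psi_\alpha$ coming from the leading coefficients of the Green polynomials in Section \ref{ss3.4.1}; it also crucially needs the geometrization \Cref{geo-weil}, since mere boundedness of the terms would not let \Cref{const} pin down $M(1)$ — one genuinely needs $M$ to be of geometric type. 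Everything else — the bookkeeping of which $(\alpha,\jmath)$ survive, and the choice of representatives in the unfolding — is routine once parts (1) and (2) of \Cref{rmk-reg} are available.
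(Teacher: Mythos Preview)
Your proposal is correct and follows essentially the same route as the paper: the paper's proof likewise observes that every summand $\Psi_\alpha(q^\nu)\Theta_{\alpha,\jmath}(\nu)$ converges, that only the quadruples $\alpha=(\phi(\jmath),[1],\upsilon(\jmath),\varsigma(\jmath))$ with $\jmath\in J(S,T)$ survive in the limit with value \eqref{summand}, and then obtains the second formula by substituting \eqref{Nbar} and \eqref{chi-UF}. Your unfolding of the second formula, including the observation that $S\subseteq G_\jmath$ automatically and the cancellation of the two factors of $|W_{G_\jmath}(S)^F|$, is exactly what the paper means by ``direct substitutions.''
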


\begin{proof} In view of the above discussions, we see that each summand of 
\[
M(\nu) =\sum_{\alpha} \sum_{\jmath\in J(S_\iota)^F} \Psi_\alpha(q^\nu)\Theta_{\alpha, \jmath}(\nu)
\]
converges as $\nu\xrightarrow{\CP_m}\infty$. 
For each $\jmath \in J(S)^F$, there is at most one quadruple 
\[
\alpha = (\phi(\jmath), [1], \upsilon(\jmath), \varsigma(\jmath)),
\]
which exists only if $ j^{-1}_{G_\jmath }({\rm cl}(T, G))\neq\varnothing$, such that $ \Psi_\alpha(q^\nu)\Theta_{\alpha, \jmath}(\nu)$ contributes to the limit of $M$ and is given by
\eqref{summand}. Thus the double sum $M(\nu)$, after taking limit as $\nu\xrightarrow{\CP_m}\infty$, reduces to a sum over $\jmath\in J(S, T)$. The  formula in the theorem follows from direct substitutions of   \eqref{Nbar} and \eqref{chi-UF}.
\end{proof}

\subsection{Regular case} \label{ssec-RC}

Let $T, S\in \CT(G)$ be parametrized by pairs of partitions $(\lambda, \lambda')$ and $(\mu, \mu')$ as \eqref{TF} and \eqref{SF} respectively,
subject to the constraints in \Cref{AT}. 
Likewise, for $\jmath\in J(S, T)$, $Z_\jmath^F$ is precisely of the form  
\[
Z_\jmath^F \cong \prod_j (\Ff^\times_j)^{\nu_j} \times (\Ff^1_{2j})^{\nu_j'},
\]
where $\nu_j\leq \lambda_j, \mu_j$ and $\nu_j'\leq \lambda_j', \mu_j'$ for each $j$.  Then
\begin{equation} \label{lZ} 
l(Z_\jmath) = \sum_j \nu_j'.
\end{equation} 
There are 
\begin{equation} \label{JST}
{\mu \choose \nu}{\mu' \choose \nu'}: = \prod_j {\mu_j\choose \nu_j} {\mu_j'\choose \nu_j'}
\end{equation} 
elements $\jmath\in J(S, T)$ that give rise to $(\nu, \nu')$. Here the notation for the partition $\nu=(j^{\nu_j})$ should not be confused with the variable $\nu\in\CP_m$ used earlier. Put
\[
\kappa_G := \begin{cases} 1, & \textrm{if }G\textrm{ is of type A}, \\
2, & \textrm{if }G\textrm{ is of type C}. 
\end{cases}
\]
We have that
\begin{equation} \label{NJS}
|\overline{N}(\phi(\jmath), S)^F| = \frac{|W_G(S)^F|}{|W_{G_\jmath}(S)^F|} ={\mu\choose \nu} {\mu'\choose \nu'}\prod_j \nu_j!\nu_j'! (\kappa_G \cdot j)^{\nu_j + \nu_j'}.
\end{equation}

Assume now that $\chi$ and $\eta$ are {\it regular}, that is, they have trivial stabilizers in $W_G(T)^F$ and $W_G(S)^F$ respectively. In this case $(-1)^{{\rm rk}\, T + {\rm rk}\, G}R^G_{T,\chi}$ and $(-1)^{{\rm rk}\, S + {\rm rk}\, G}R^G_{S,\eta}$ are irreducible representations of $G^F$. Below we give an explicit formula for the multiplicity 
\[
\langle R^G_{T, \chi} \otimes\omega_\psi^\vee, R^G_{S, \eta}\rangle_{G^F}
\]
using Theorem \ref{MF}. As expected, the computation is similar to the case of Bessel models for special orthogonal groups given in \cite[\S9]{R}, and we  give a sketch. 

Recall the group $\Gamma=\Gal(\bar{\Ff}/\Ff)\times\langle\imath\rangle$ from Section \ref{PS}. It acts on each $\Irr(\Ff_j^\times)$ and $\Irr(\Ff_{2j}^1)$ in the obvious way.  Put
\[
\Gamma_G :=\begin{cases} \Gal(\bar{\Ff}/\Ff), & \textrm{if }G\textrm{ is of type A}, \\
\Gamma, & \textrm{if }G\textrm{ is of type C}.\end{cases}
\]
On the $j$-th blocks of $T^F$ and $S^F$ respectively, write
\begin{align*}
& \chi=\chi_{j1}\otimes\cdots \otimes \chi_{j\lambda_j}\otimes \chi_{j1}'\otimes\cdots\otimes \chi_{j\lambda_j'}', \\
&  \eta=\eta_{j1}\otimes\cdots \otimes \eta_{j\mu_j}\otimes \eta_{j1}'\otimes\cdots\otimes \eta_{j\mu_j'}'. 
\end{align*}
Recall the quadratic characters $\vartheta_j$ and $\vartheta_j'$ of $\Ff_j^\times$ and $\Ff_{2j}^1$ respectively. Define
\begin{align*}
& I_j :=\Set{ k\in [1, \mu_j] | \eta_{jk}\in \Gamma_G\cdot \chi_{jl}\vartheta_j \textrm{ for some }l \in [1, \lambda_j]}, \\
& I_j' :=\Set{ k\in [1, \mu'_j] | \eta'_{jk}\in \Gamma_G\cdot \chi'_{jl}\vartheta_j' \textrm{ for some }l \in [1, \lambda'_j] }.
\end{align*}
By the regularity assumption, 
\begin{equation} \label{CS}
\langle \chi_{\upsilon(\jmath)}\vartheta_{Z_\jmath}, \eta_{\varsigma(\jmath)}\rangle_{Z_\jmath^F}= \prod_j {|I_j| \choose \nu_j}{|I_j'| \choose \nu_j'} \nu_j! \nu_j'!(\kappa_G \cdot j)^{\nu_j+ \nu_j'}.
\end{equation} 
Put  \label{eTS}
\begin{equation} 
e_{T, S} :=(-1)^{{\rm rk}\, T +{\rm rk}\, S}.
\end{equation}
 By  \eqref{lZ}, \eqref{JST}, \eqref{NJS}, \eqref{CS} and Theorem \ref{MF},
\begin{equation} \label{reg-MF}
e_{T, S}\cdot\langle R^G_{T,\chi}\otimes \omega^\vee_\psi, R^G_{S,\eta}\rangle_{G^F} =  \sum_{\nu, \nu'} (-1)^{\sum_j \nu_j'} \prod_j {|I_j| \choose \nu_j } {|I_j'| \choose \nu_j'} = \begin{cases} 2^r, & \textrm{if }I_j'=\varnothing\textrm{ for all }j, \\
0, & \textrm{otherwise,}
\end{cases}
\end{equation} 
where $r= \sum_j |I_j|$. 

In particular, further assume that $T$ and $S$ are anisotropic mod $Z(G)$, so that $(-1)^{{\rm rk}\, T + {\rm rk}\, G}R^G_{T,\chi}$ and $(-1)^{{\rm rk}\, S + {\rm rk}\, G}R^G_{S,\eta}$ are cuspidal. Then $r=0$ when $G= \Sp_{2n}$ or $\RU_n$, and $r\leq 1$ when $G=\GL_n$. Note that $I_j'=\varnothing$ for all $j$ in the latter case. 

\subsection{A conjecture of Hiss and Schr\"oer}

The study of the multiplicity \eqref{FJ-mult}  of the basic  Fourier-Jacobi model amounts to understanding the irreducible decomposition 
\[
\pi\otimes\omega_\psi^\vee = \sum_{\sigma\in \Irr(G^F)} m(\pi, \sigma) \sigma
\]
for any $\pi\in \Irr(G^F)$. The case that $\pi ={\rm St}_G$ is the Steinberg representation of $G^F$ is studied in \cite{HZ}. Two general conjectures are made in \cite{HS}: one is about the structure of the algebra $\End_{\BC G^F}(\pi\otimes\omega_\psi^\vee)$, and the other one is the following conjecture on the multiplicities $m(\pi, \sigma)$ in the decomposition of $\pi\otimes\omega_\psi^\vee$. We note that $\omega_\psi^\vee\cong \omega_{\psi^{-1}}$. 

\begin{conj}[\cite{HS}] \label{HS-conj}
Let $\pi\in\Irr(G^F)$, where $G=\Sp_{2n}$, $\RU_n$ or $\GL_n$ is defined over $\Ff$. Then the multiplicities of the irreducible constituents of $\pi\otimes \omega_\psi^\vee$ are bounded by a function of $n$ which is independent of $q=|\Ff|$.  
\end{conj}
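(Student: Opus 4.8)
The plan is to prove \Cref{HS-conj} for $G=\GL_n$ and $G=\RU_n$; the remaining case $G=\Sp_{2n}$ appears to lie beyond the present method and is commented on at the end. The strategy is to reduce the statement about arbitrary $\pi,\sigma\in\Irr(G^F)$ to \Cref{MF}, by expanding $\pi$ and $\sigma$ in Deligne--Lusztig characters with a bound, depending only on $n$, on both the number of terms that occur and the size of the coefficients.

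First I would record the structural input about type $A$. For $G=\GL_n$ or $\RU_n$ every irreducible character is uniform: all unipotent characters of general linear and unitary groups over finite fields are uniform (the relevant Lusztig families being singletons), and via Jordan decomposition an arbitrary $\pi$ lying in the Lusztig series $\CE(G^F,s)$ of a semisimple class $s$ of the dual group is determined by a unipotent character of $C_{G^*}(s)$, which is a product of general linear and unitary groups over finite extensions of $\Ff$ of total absolute rank $n$. Hence, by the inversion (uniform projection) formula,
\[
\pi \;=\; \sum_{[T,\chi]} \frac{\langle \pi, R^G_{T,\chi}\rangle_{G^F}}{|W_G(T)^F|}\, R^G_{T,\chi},
\]
the sum running over $G^F$-conjugacy classes of pairs $(T,\chi)$. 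Two observations then give the needed $q$-independent control: (i) by Cauchy--Schwarz, $\bigl|\langle \pi, R^G_{T,\chi}\rangle_{G^F}\bigr|\le \|R^G_{T,\chi}\| = |\Stab_{W_G(T)^F}(\chi)|^{1/2}\le |W_G(T)^F|^{1/2}$, so each coefficient above has absolute value at most $1$; (ii) $\langle \pi, R^G_{T,\chi}\rangle_{G^F}\neq 0$ forces $R^G_{T,\chi}$ to meet $\CE(G^F,s)$, so dually the class of $(T,\chi)$ corresponds to a conjugacy class of maximal torus of $C_{G^*}(s)$, and the number of these is the number of ($F$-twisted) conjugacy classes in the Weyl group of $C_{G^*}(s)$, which is bounded by a function of $n$ alone (crudely, by $2^n$). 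The same applies to $\sigma$.

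Next I would feed these expansions into \Cref{MF}. Writing $\pi=\sum_{k}c_k R^G_{T_k,\chi_k}$ and $\sigma=\sum_{l}d_l R^G_{S_l,\eta_l}$ with at most $2^n$ terms each and all $|c_k|,|d_l|\le 1$, sesquilinearity of the pairing in \eqref{FJ} gives
\[
m(\pi,\sigma) \;=\; \sum_{k,l} \overline{c_k}\,d_l\,\bigl\langle R^G_{T_k,\chi_k}\otimes\omega_\psi^\vee,\, R^G_{S_l,\eta_l}\bigr\rangle_{G^F}.
\]
By \Cref{MF}, each inner product on the right is a sum over $\jmath\in J(S_l,T_k)\subseteq J(S_l)^F$, a set of cardinality at most $|J(S_l)|=2^n$, of contributions of the shape $\pm|W_{G_\jmath}(T_\jmath)^F|^{-1}|W_G(S_l)^F|^{-1}\sum_{w,v}\langle {}^w\chi_\jmath\,\vartheta_{Z_\jmath},\,{}^v\eta_l\rangle_{Z_\jmath^F}$, in which each term $\langle {}^w\chi_\jmath\,\vartheta_{Z_\jmath},\,{}^v\eta_l\rangle_{Z_\jmath^F}$ is an inner product of two linear characters of the finite abelian group $Z_\jmath^F$, hence equals $0$ or $1$; thus each $\jmath$-contribution is at most $|W_G(T_\jmath)^F|/|W_{G_\jmath}(T_\jmath)^F|\le |W_G|$, and $\bigl|\langle R^G_{T_k,\chi_k}\otimes\omega_\psi^\vee, R^G_{S_l,\eta_l}\rangle_{G^F}\bigr|\le 2^n|W_G|=:C(n)$, a function of $n$ alone (in the regular case \eqref{reg-MF} already gives the sharper bound $2^n$). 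Therefore $m(\pi,\sigma)\le 2^n\cdot 2^n\cdot C(n)=:M(n)$, and since $m(\pi,\sigma)$ is a non-negative integer it is bounded by the explicit $q$-free quantity $M(n)$; I would not attempt to optimize $M(n)$.

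The routine part is step (ii) --- checking that only a $q$-independent number of Deligne--Lusztig characters appear in the uniform expansion of a fixed $\pi$ --- which merely repackages the Jordan-decomposition description of $\Irr(\GL_n(\Ff))$ and $\Irr(\RU_n(\Ff))$. The genuine obstacle, which this approach does not surmount, is $G=\Sp_{2n}$: there irreducible characters need not be uniform, the Weil representation is itself non-uniform, and \Cref{MF} no longer controls $m(\pi,\sigma)$; bounding the contributions of the non-uniform constituents would require ideas beyond those used here, so that part of \Cref{HS-conj} would be left open.
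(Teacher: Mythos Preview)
Your proposal is correct and follows essentially the same strategy as the paper: restrict to $G=\GL_n$ or $\RU_n$, use that every irreducible is uniform to expand $\pi$ and $\sigma$ in Deligne--Lusztig characters with $q$-independently many terms and bounded coefficients, and then invoke the crude estimate $\bigl|\langle R^G_{T,\chi}\otimes\omega_\psi^\vee, R^G_{S,\eta}\rangle\bigr|\le 2^n\,n!$ coming from \Cref{MF}; the case $G=\Sp_{2n}$ is left open in the paper as well.

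The only genuine difference is the device used to control the expansion: the paper quotes the Lusztig--Srinivasan formula $\pi=R^G_{L,\chi,\rho}=\frac{(-1)^{\cdots}}{|W_L|}\sum_{w\in W_L}\rho(ww_L^e)R^G_{T_w,\chi_w}$ and bounds the character values of $\rho$ by $\dim\rho\le\sqrt{|W_L|}$, obtaining the explicit constant $2^n(n!)^2$, whereas you use the uniform projection together with Cauchy--Schwarz and the Jordan decomposition count of relevant tori, arriving at a bound of order $2^{3n}n!$. Both routes are perfectly valid and neither bound is claimed to be sharp. One small slip: in your displayed expansion the denominator should be $\langle R^G_{T,\chi},R^G_{T,\chi}\rangle=|\Stab_{W_G(T)^F}(\chi)|$ rather than $|W_G(T)^F|$ when summing over $G^F$-classes of pairs; your subsequent bound (i) already uses the correct norm, so the argument is unaffected.
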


Conjecture \ref{HS-conj} is stated only for $G=\Sp_{2n}$ or $\RU_n$ in \cite{HS}, but it is natural to include the $\GL_n$ case as well. Below we apply Theorem \ref{MF} and the classification of $\Irr(G^F)$ in \cite{LS} to give a quick proof of this conjecture  for $G=\RU_n$ or $\GL_n$. We restrict to type A because in this case virtual characters  on $G^F$ are all {\it uniform}, i.e. they can be spanned by Deligne-Lusztig characters. It is also possible to prove the conjecture for $G=\Sp_{2n}$ using Theorem \ref{MF}, the classification in \cite{L1} and some extra work (e.g. \cite{W}), which will not be pursued here.

We now  prove  Conjecture \ref{HS-conj} in type A with the following explicit bound for the multiplicities $m(\pi,\sigma)$, although  it is by no means optimal. 

\begin{thm} \label{HS-A}
Assume that $G=\RU_n$ or $\GL_n$ is defined over $\Ff$.  Then for any $\pi, \sigma\in \Irr(G^F)$, 
\[
m(\pi, \sigma) = \langle \pi\otimes \omega_\psi^\vee, \sigma\rangle_{G^F} \leq 2^n(n!)^2 .
\]
\end{thm}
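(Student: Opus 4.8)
\emph{Plan.} The strategy is to reduce the computation of $m(\pi,\sigma)$ to the Deligne--Lusztig case governed by Theorem~\ref{MF}, exploiting that in type A every irreducible character is uniform.

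\emph{Step 1 (expansion in Deligne--Lusztig characters with small coefficients).} By the Lusztig--Srinivasan classification \cite{LS}, for $G=\RU_n$ or $\GL_n$ every $\pi\in\Irr(G^F)$ is uniform, so the standard uniform-projection formula gives a finite expansion
\[
\pi=\sum_{(T,\chi)}c_{T,\chi}\,R^G_{T,\chi},\qquad c_{T,\chi}=\frac{\langle\pi,R^G_{T,\chi}\rangle_{G^F}}{|W(T,\chi)^F|},
\]
the sum over $G^F$-conjugacy classes of pairs $(T,\chi)$ with $T\in\CT(G)$, $\chi\in\Irr(T^F)$ (well defined since $R^G_{T,\chi}=R^G_{T,{}^w\chi}$), where $W(T,\chi)^F$ is the stabilizer of $\chi$ in $W_G(T)^F$. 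From $\langle R^G_{T,\chi},R^G_{T,\chi}\rangle_{G^F}=|W(T,\chi)^F|$ (Deligne--Lusztig orthogonality) and uniformity of $\pi$, pairing the identity with $\pi$ gives $\sum_{(T,\chi)}|\langle\pi,R^G_{T,\chi}\rangle_{G^F}|^2/|W(T,\chi)^F|=\langle\pi,\pi\rangle_{G^F}=1$. Since $\langle\pi,R^G_{T,\chi}\rangle_{G^F}\in\BZ$, the nonzero terms have $|\langle\pi,R^G_{T,\chi}\rangle_{G^F}|\ge 1$, so $\sum_{(T,\chi)\in\mathrm{supp}}1/|W(T,\chi)^F|\le 1$, and Cauchy--Schwarz then yields
\[
\sum_{(T,\chi)}|c_{T,\chi}|\;\le\;\Bigl(\sum_{(T,\chi)}\frac{|\langle\pi,R^G_{T,\chi}\rangle_{G^F}|^2}{|W(T,\chi)^F|}\Bigr)^{1/2}\Bigl(\sum_{(T,\chi)\in\mathrm{supp}}\frac{1}{|W(T,\chi)^F|}\Bigr)^{1/2}\le 1;
\]
the same applies to an expansion $\sigma=\sum_{(S,\eta)}d_{S,\eta}R^G_{S,\eta}$, giving $\sum_{(S,\eta)}|d_{S,\eta}|\le 1$.

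\emph{Step 2 (bilinear expansion and a $q$-free Deligne--Lusztig bound).} Since $m(\pi,\sigma)=\langle\pi\otimes\omega_\psi^\vee,\sigma\rangle_{G^F}$ is the inner product of class functions, it is linear in $\pi$ and conjugate-linear in $\sigma$, so
\[
m(\pi,\sigma)=\sum_{(T,\chi),\,(S,\eta)}c_{T,\chi}\,\overline{d_{S,\eta}}\;\langle R^G_{T,\chi}\otimes\omega_\psi^\vee,\,R^G_{S,\eta}\rangle_{G^F},
\]
and it remains to bound each Deligne--Lusztig multiplicity by a function of $n$. By Theorem~\ref{MF} this multiplicity is a sum over $\jmath\in J(S,T)\subseteq J(S)^F$, a set of size at most $|J(S)|=2^n$, of terms $\pm\,|\overline{N}(\phi(\jmath),S)^F|^{-1}\,\langle\chi_{\upsilon(\jmath)}\vartheta_{Z_\jmath},\eta_{\varsigma(\jmath)}\rangle_{Z_\jmath^F}$. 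Here $Z_\jmath^F$ is a finite abelian group; $\eta_{\varsigma(\jmath)}=\sum_{\bar\gamma\in\CO_{\varsigma(\jmath)}}{}^\gamma\eta$ is a sum of $|\CO_{\varsigma(\jmath)}|=|\overline{N}(\phi(\jmath),S)^F|$ characters of $Z_\jmath^F$, while $\chi_{\upsilon(\jmath)}=\sum_{\bar\delta\in\CO_{\upsilon(\jmath)}}{}^\delta\chi$ is a sum of $|\CO_{\upsilon(\jmath)}|=|W_G(T_\jmath)^F|/|W_{G_\jmath}(T_\jmath)^F|\le|W_G(T)^F|\le|W|=n!$ characters. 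By orthonormality of characters of $Z_\jmath^F$, $|\langle\chi_{\upsilon(\jmath)}\vartheta_{Z_\jmath},\eta_{\varsigma(\jmath)}\rangle_{Z_\jmath^F}|\le n!\cdot|\overline{N}(\phi(\jmath),S)^F|$, so each $\jmath$-summand is $\le n!$ and $|\langle R^G_{T,\chi}\otimes\omega_\psi^\vee,R^G_{S,\eta}\rangle_{G^F}|\le 2^n n!$. Combining with Step 1, $m(\pi,\sigma)\le\bigl(\sum|c_{T,\chi}|\bigr)\bigl(\sum|d_{S,\eta}|\bigr)\,2^n n!\le 2^n n!\le 2^n(n!)^2$.

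\emph{Main obstacle and remarks.} The substance is Step 2 --- extracting from Theorem~\ref{MF} a bound independent of $q$ --- which requires carefully unwinding the notation there: the set $J(S,T)$, the $W_G(\cdot)^F$-orbits $\CO_{\upsilon(\jmath)}$ and $\CO_{\varsigma(\jmath)}$, and the description of $\chi_{\upsilon(\jmath)}$, $\eta_{\varsigma(\jmath)}$ as explicit sums of characters of the finite abelian group $Z_\jmath^F$; one also uses the elementary facts $|W_G(T)^F|\le|W|=n!$ and $|J(S)|=2^n$ recorded in Section~\ref{PS}. Step 1 is where type A is essential: for $\Sp_{2n}$ not all irreducible characters are uniform, so the uniform projection does not recover $\pi$ and the bilinear reduction breaks; that case needs the classification of \cite{L1} and genuinely more input. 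As the statement notes, the bound is far from sharp --- the argument above in fact gives $m(\pi,\sigma)\le 2^n n!$ --- and we do not pursue optimization.
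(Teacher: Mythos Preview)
Your proof is correct, and Step~2 coincides with the paper's ``crude estimate'' $|m(R^G_{T,\chi},R^G_{S,\eta})|\le 2^n n!$ extracted from Theorem~\ref{MF}.

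Your Step~1 is genuinely different from the paper's and in fact sharper. The paper invokes the explicit Lusztig--Srinivasan formula
\[
\pi=R^G_{L,\chi,\rho}=\frac{(-1)^{{\rm rk}\,G+{\rm rk}\,L}}{|W_L|}\sum_{w\in W_L}\rho(ww_L^e)\,R^G_{T_w,\chi_w}
\]
and bounds the coefficients by $|\rho(w)|\le\dim\rho\le\sqrt{|W_L|}$, so the bilinear expansion contributes an extra factor $\dim\rho\cdot\dim\rho'\le\sqrt{|W_L|\,|W_{L'}|}\le n!$, yielding $2^n(n!)^2$. You instead use only the fact that $\pi$ is uniform in type~A together with Deligne--Lusztig orthogonality and a Cauchy--Schwarz/integrality argument to get $\sum_{(T,\chi)}|c_{T,\chi}|\le 1$; this avoids the extra $n!$ and gives the improved bound $m(\pi,\sigma)\le 2^n n!$. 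The paper's approach has the advantage of being completely explicit about which pairs $(T_w,\chi_w)$ occur, but for the purpose of establishing the Hiss--Schr\"oer bound your argument is cleaner and more portable.
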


\begin{proof}
By \cite[Theorem 3.2]{LS}, any irreducible representation of $G^F$ is of the form 
\[
R^G_{ L, \chi,  \rho }:=\frac{(-1)^{{\rm rk}\, G + {\rm rk}\, L}}{|W_L|}\sum_{w\in W_L} \textrm{Tr}(w\tilde{F}, V_\rho) \cdot R^G_{T_w, \chi_w},
\]
where 
\begin{itemize}
\item $L$ is an $F$-stable  reductive connected  subgroup of $G$ of absolute rank equal to that of $G$,
\item $\chi: L^F\to \BC^\times$ is a homomorphism, satisfying certain properties,
\item $(\rho, V_\rho) \in \Irr(W_L)^F$ and $\tilde{F}$ is a  finite order automorphism of $V_\rho$ such that $\tilde F w\tilde F^{-1}=F(w)$ on $V_\rho$ for all $w\in W_L$, 
\item $T_w\in \CT(L)$ is of class $[w]\in H^1(F, W_L)$, and $\chi_w := \chi|_{T_w}$. 
\end{itemize}
By Theorem \ref{MF}, for any Deligne-Lusztig characters $R^G_{T,\chi}$ and $R^G_{S,\eta}$ of $G^F$, we have the crude estimate 
\[
| m(R^G_{T,\chi}, R^G_{S,\eta}) |  \leq \sum_{\jmath\in J(S,T)} \frac{|W_G(T_\jmath)^F|}{|W_{G_\jmath}(T_\jmath)^F|} \\
 \leq |J(S, T)|\cdot n!  \leq 2^n n!.
\]
Assume that $\pi=R^G_{L, \chi, \rho}$ and $\sigma= R^G_{L', \chi', \rho'}$ as above. Since
$w\tilde F$ is a finite order automorphism of $V_\rho$, where $w\in W_L$, we have that
\[
|{\rm Tr}(w\tilde{F}, V_\rho)|\leq \dim\rho \leq \sqrt{|W_L|},
\]
and similar inequalities hold for $\rho'$. Thus
\[
m(\pi, \sigma) \leq 2^n n! \cdot \dim \rho \cdot \dim \rho' \leq 2^n n! \sqrt{|W_L| \cdot |W_{L'}|} \leq 2^n (n!)^2,
\]
noting that $|W_L|, |W_{L'}|\leq n!$. This finishes the proof. 
\end{proof}

\section{Two non-regular examples} \label{sec-TNRE}

In this section, we apply Theorem \ref{MF} to study two examples of Fourier-Jacobi models for certain non-regular Deligne-Lusztig characters. They arise from the depth zero case of  the local descent for $p$-adic unitary groups given in \cite{ST}, which will be studied in the next section.

For convenience, we switch to another standard notation for Deligne-Lusztig characters as follows. Let $G$ be a  connected reductive group  over $\Ff$ with Frobenius $F$. The pairs $(T, \chi)$ where 
$T\in \CT(G)$ and $\chi\in \Irr(T^F)$, are in natural correspondence with the pairs $(T^*, s)$ where $T^*$ is the dual torus of $T$ and $s\in T^{*, F}$. Then we write 
$R^G_{T, s} := R^G_{T,\chi}$. 

The algebraic subgroups in this section are all assumed to be $F$-stable. %unless otherwise specified. 

\subsection{Unitary groups}  \label{ssec-UG}
For $m \geq 1$, put  $\RG_m :=R_{\Ff_2/\Ff}\GL_m $, where 
$R_{\Ff_2/\Ff}$ denotes the Weil restriction of scalars for the quadratic extension $\Ff_2/\Ff$, so that $G_m^F = \GL_m(\Ff_2)$.  Let $\iota\in\Gal(\Ff_2/\Ff)$  be the nontrivial Galois automorphism.
A representation $\tau$ of  $\GL_m(\Ff_2)$ is called {\it conjugate self-dual} if $\tau^\vee\cong \tau^\iota$. 

Let $G=\RU_{4n-2}$ be defined over $\Ff$. Let $P$ be the Siegel parabolic subgroup of $G$, with Levi subgroup $M = \RG_{2n-1}$.   
Let $T\in \CT(M)$ be an elliptic torus (i.e. $T^F\cong \Ff_{4n-2}^\times$), and let $s\in T^{*, F}\cong \Ff_{4n-2}^\times$ be regular in $M^*$. Then $R^M_{T, s}\in \Irr(M^F)$ is  cuspidal, and all irreducible cuspidal representations of $M^F \cong \GL_{2n-1}(\Ff_2)$ arise in this way.
It is known that (e.g. \cite[Proposition 5.1]{Pr2}) $R^M_{T, s}$ is conjugate self-dual if and only if
\begin{equation} \label{SD}
 s\in \Ff_{4n-2}^1.
\end{equation} 
On the other hand, $\GL_m(\Ff_2)$ has no conjugate self-dual irreducible cuspidal representations if $m$ is odd. In this subsection, we assume that \eqref{SD} holds.

By induction in stages, 
\[
 \Ind^G_P ( R^M_{T, s} ) =  R^G_{T, s},
\]
where $\Ind^G_P$ denotes the parabolic induction from $P^F$ to $G^F$, and we have viewed $T\in \CT(G)$. The decomposition of the non-regular Deligne-Lusztig representation 
$R^G_{T, s}$ is as follows. 
Let $T_{\rm a}\in \CT(G)$ such that $T_{\rm a}^F\cong \Ff_{4n-2}^1\times \Ff_{4n-2}^1$ and view $s\in \Ff_{4n-2}^1$ as an element of  $T_{\rm a}^{*, F}$ via diagonal embedding. Put
\begin{equation}  \label{reg-ss}
 \pi_\reg := \frac{R^G_{T, s} - R^G_{T_{\rm a}, s}}{2},\quad  \pi_{\rm ss} :=  \frac{R^G_{T, s} + R^G_{T_{\rm a}, s}}{2}.
\end{equation} 
By (10.7.3) and (10.7.4) in \cite{DL},
\begin{equation} \label{Dec}
R^G_{T, s}=   \pi_\reg \oplus \pi_{\rm ss}. 
\end{equation} 
is the decomposition of $R^G_{T, s}$  into irreducible representations of $G^F$. Moreover $\pi_\reg$ is generic, and $\pi_{\rm ss}$ is non-generic. 

We briefly recall the Fourier-Jacobi model in this case, and refer to \cite{GGP1} for details. For $0\leq l \leq 2n-1$, let $P_l\subset P$ be the standard parabolic subgroup with Levi decomposition $P_l= M_l N_l$, such that $M_l \cong \RG_1^l \times H_l$, where $H_l:= \RU_{4n-2l-2}$. Recall that 
$\RG_1 =R_{\Ff_2/\Ff}\GL_1$.  Let $\pi$ and $\sigma$ be virtual characters of $G^F$ and $H^F_l$ respectively.  Denote by 
$\omega_{k,\psi}$, $k\geq 0$, the Weil representation of $\RU_k(\Ff)$ associated to $\psi$. The Fourier-Jacobi model of $\pi$ and $\sigma$ is as follows. 

The basic case that $l=0$ is given by \eqref{FJ-mult}. We recall it as 
\[
m(\pi,\sigma):=\langle \pi\otimes \omega_{4n-2, \psi}^\vee, \sigma \rangle_{G^F}.
\]
For $l>0$, one can define an $M_l$-equivariant  homomorphism 
\begin{equation} \label{ZH}
N_l \twoheadrightarrow Z_l \times \BH_{l},
\end{equation}
where $Z_l$ is a maximal unipotent  subgroup of $\RG_l$, and $\BH_{l}$ denotes the Heisenberg group associated to $H_l$. Let $\psi_l$ be a generic character of $Z_l^F$ defined using $\psi$, and put
\[
\nu_{l, \psi}: = \psi_l \otimes \omega_{4n-2l-2,\psi},
\]
which via \eqref{ZH} can be naturally viewed as a representation of $R_l^F$, with
\[
R_l:= H_l \ltimes N_l. 
\]
The Fourier-Jacobi model is concerned with the multiplicity 
\[
m(\pi,\sigma):= \langle \pi \otimes \nu_{l, \psi}^\vee, \sigma \rangle_{R_l^F}.
\]
In view of \cite{JZ}, for a representation $\pi$ of $G^F$, let $\CD_{l, \psi}(\pi)$ be the Jacquet module of $\pi \otimes \nu_{l, \psi}^\vee$ with respect to $N_l^F$, which is a representation of 
$H_l^F$, called the $l$-th {\it Fourier-Jacobi descent }of 
$\pi$ along $\psi$.  Then in this case we equivalently have that
\[
m(\pi,\sigma) = \langle \CD_{l, \psi}(\pi), \sigma\rangle_{H_l^F}. 
\]

For the example in this subsection, since the basic case that $l=0$ has been addressed by Theorem \ref{MF}, we only need to consider  the case that $0<l\leq 2n-1$.  Then we have the following  result, which in view of the decomposition  \eqref{Dec} can be interpreted as a finite field analog of the GGP conjecture for the Lusztig series of $s\in G^{*, F}$.  

\begin{thm} \label{U-des}
Let $R^G_{T, s}=\pi_\reg +\pi_{\rm ss}$ be given by \eqref{Dec}. Assume that $n\geq 2$. Then

\begin{enumerate}

\item For $0< l\leq 2n-1$ and any Deligne-Lusztig character $R^{H_l}_{S_0, s_0}$ of $H_l^F$, it holds that
\[
m(R^G_{T, s}, (-1)^{l+1+{\rm rk}\, S_0}R^{H_l}_{S_0, s_0})= 1.
\]

\item $\CD_{l, \psi}(\pi_{\rm ss})=0$ for $n-1<l\leq 2n-1$, and 
\begin{equation} \label{FJ-U}
 \CD_{n - 1, \psi}(\pi_{\rm ss}) = \bigoplus_{ a\in \Ff_2^1}(-1)^n R^{\RU_{2n}}_{S_0, ( - s, a)},
\end{equation}
where $S_0\in \CT(\RU_{2n})$ such that $S_0^F\cong \Ff_{4n-2}^1\times \Ff_2^1$.

% \item $m(\pi_\reg, \sigma)=0$ for each summand $\sigma\in \Irr(\RU_{2n}^F)$ in the right hand side of \eqref{FJ-U}.

\end{enumerate}
\end{thm}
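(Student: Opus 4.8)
The plan is to reduce both statements, for every $l$, to the basic Fourier--Jacobi multiplicities evaluated in Theorem \ref{MF}, using the decomposition \eqref{Dec} and the genericity of $\pi_\reg$. The first ingredient is the standard adjunction between the $l$-th Fourier--Jacobi descent and a basic period (cf. \cite{JZ, ST}). Let $Q_l\subset G$ be the $F$-stable parabolic with Levi $\RG_l\times H_l$, so that $P_l\subset Q_l$ and $N_l/N(Q_l)\cong Z_l$. Realizing $\omega_{4n-2,\psi}$ on $Q_l^F$ in its mixed Schr\"odinger model and using that integration against the generic character $\psi_l$ of $Z_l^F$ cuts the $\GL_l(\Ff_2)$-part down to one dimension, one obtains for virtual characters $\pi$ of $G^F$ and $\sigma$ of $H_l^F$
\[
m(\pi,\sigma)=\langle \pi\otimes\omega_{4n-2,\psi}^\vee,\ \Ind^G_{Q_l}(\mathrm{GG}_l\otimes\sigma)\rangle_{G^F},
\]
where $\mathrm{GG}_l$ is the Gelfand--Graev representation of $\GL_l(\Ff_2)$ twisted by an explicit quadratic character (as in \Cref{weil2}) and $\Ind^G_{Q_l}$ is Harish-Chandra induction.

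For part (1), since $\GL_l(\Ff_2)$ is of type A the representation $\mathrm{GG}_l$ is uniform with $\langle\mathrm{GG}_l,R^{\GL_l(\Ff_2)}_{T',\theta'}\rangle=\pm1$ for every $(T',\theta')$; expanding it into Deligne--Lusztig characters and using transitivity of Lusztig induction, $\Ind^G_{Q_l}(\mathrm{GG}_l\otimes R^{H_l}_{S_0,s_0})$ becomes a $\BC$-linear combination of the characters $R^G_{T'\times S_0,\ \theta'\otimes s_0}$ of $G^F$. Each pairing $\langle R^G_{T,s}\otimes\omega_\psi^\vee,\ R^G_{T'\times S_0,\theta'\otimes s_0}\rangle_{G^F}$ is then evaluated by Theorem \ref{MF}, and I expect the bookkeeping analogous to \S\ref{ssec-RC} to collapse: on the $\RG_l$-blocks the Gelfand--Graev weights sum to $1$ on each isomorphism type, the $\vartheta_{Z_\jmath}$-twist cancels the contributions coming from $\Ff_{2j}^1$-factors, and (because $s$ runs over a full $\Gamma_G$-orbit on the $T$-side) what survives is independent of $(S_0,s_0)$ and equal to $\epsilon_{H_l}\epsilon_{S_0}=(-1)^{l+1+{\rm rk}\,S_0}$. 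Equivalently, $\CD_{l,\psi}(R^G_{T,s})$ has the same uniform projection as the Gelfand--Graev representation $\mathrm{GG}_{H_l}$ of $H_l^F$, and since part (1) only concerns pairings with (signed) Deligne--Lusztig characters it follows at once from the classical identity $\langle\mathrm{GG}_{H_l},R^{H_l}_{S_0,s_0}\rangle=\epsilon_{H_l}\epsilon_{S_0}$.

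For part (2), by exactness of the twisted Jacquet functor, $\CD_{l,\psi}(\pi_{\rm ss})=\tfrac12\bigl(\CD_{l,\psi}(R^G_{T,s})+\CD_{l,\psi}(R^G_{T_{\rm a},s})\bigr)$ as virtual characters. The second summand is again a Fourier--Jacobi period of Deligne--Lusztig characters, computed by the same reduction to Theorem \ref{MF}; the relevant features are that ${\rm rk}\,T_{\rm a}=0$, that $T_{\rm a}^F\cong(\Ff_{4n-2}^1)^2$ with $s$ placed diagonally, and that the $\vartheta_{Z_\jmath}$-twist forces the contributing tori $S_0\in\CT(\RU_{2n})$ to satisfy $S_0^F\cong\Ff_{4n-2}^1\times\Ff_2^1$ and the contributing characters to be the $(-s,a)$, $a\in\Ff_2^1$, with a nonzero contribution only at $l=n-1$. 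Combining the two computations gives the uniform projection of $\CD_{l,\psi}(\pi_{\rm ss})$: it is $0$ for $n-1<l\le 2n-1$ and $\bigoplus_{a\in\Ff_2^1}(-1)^nR^{\RU_{2n}}_{S_0,(-s,a)}$ for $l=n-1$.

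The remaining, and most delicate, step is to upgrade these statements about uniform projections to statements about $\CD_{l,\psi}(\pi_{\rm ss})$ itself. For $n-1<l\le 2n-1$ I would combine the vanishing of the uniform projection with the genericity of $\pi_\reg$ — which, via part (1), forces $\CD_{l,\psi}(\pi_\reg)$ to carry the full Gelfand--Graev uniform content of $H_l^F$ — together with a dimension count: one computes $\dim\CD_{l,\psi}(\pi_{\rm ss})=\tfrac12\bigl(\dim\CD_{l,\psi}(R^G_{T,s})+\dim\CD_{l,\psi}(R^G_{T_{\rm a},s})\bigr)$ from the character formula \eqref{char} and checks it is $0$, whence $\CD_{l,\psi}(\pi_{\rm ss})=0$. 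For $l=n-1$ one argues that every rational Lusztig series of $\RU_{2n}^F$ that can occur in $\CD_{n-1,\psi}(\pi_{\rm ss})$ has connected dual centralizer equal to a torus times a unitary group of rank $\le 1$, hence is uniform, so $\CD_{n-1,\psi}(\pi_{\rm ss})$ equals its uniform projection and effectivity forces the claimed decomposition (with the same argument for $\pi_\reg$ explaining why the $(-s,a)$-constituents occur only in $\pi_{\rm ss}$). Controlling these Lusztig-series contributions — equivalently, ruling out a non-uniform part of the twisted Jacquet module — is the point I expect to require the most care, and where the explicit description of $\pi_{\rm ss}$ in \eqref{reg-ss} and the Weil character formula are essential.
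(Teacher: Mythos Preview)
Your approach differs from the paper's in two significant ways, and both differences cost you clarity without buying anything.

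\textbf{Reduction to the basic case.} You propose an adjunction
\[
m(\pi,\sigma)=\langle \pi\otimes\omega_{4n-2,\psi}^\vee,\ \Ind^G_{Q_l}(\mathrm{GG}_l\otimes\sigma)\rangle_{G^F}
\]
with the full Gelfand--Graev representation on the $\RG_l$-block. The paper instead proves a much simpler reduction (Lemma~\ref{red}, the finite-field analogue of \cite[Theorem~16.1]{GGP1}): for \emph{any single} cuspidal $\tau\in\Irr(\RG_l^F)$ whose Harish--Chandra series does not meet that of $R^G_{T,s}$, one has
\[
m(R^G_{T,s},\sigma)=m\bigl(R^G_{T,s},\,\Ind^G_{Q_l}(\tau\otimes\sigma)\bigr),
\]
and the same holds with $R^G_{T,s}$ replaced by $\pi_{\rm ss}$ since they share cuspidal support. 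Choosing $\tau=(-1)^{l+1}R^{\RG_l}_{S_1,s_1}$ with $S_1$ elliptic, the right-hand side is a single basic multiplicity $m(R^G_{T,s},R^G_{S_1\times S_0,(s_1,s_0)})$ computed by Theorem~\ref{MF}: since $T$ and $S_1\times S_0$ share no rational factors, $|J(S,T)|=1$ and the answer is $e_{T,S}=(-1)^{{\rm rk}\,S_0}$. This proves part~(1) in two lines. Your GG formula, by contrast, requires its own proof (it is not the standard statement), and then forces you into the ``bookkeeping analogous to \S\ref{ssec-RC}'' that you yourself flag as expected rather than verified.

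\textbf{Ruling out non-uniform contributions.} Your ``most delicate step''---controlling possible non-uniform pieces of $\CD_{l,\psi}(\pi_{\rm ss})$ via dimension counts and Lusztig-series arguments---is unnecessary here. Since $H_l=\RU_{4n-2-2l}$ is a finite unitary group, \emph{every} class function on $H_l^F$ is uniform by Lusztig--Srinivasan \cite{LS} (this is the same fact used in the proof of Theorem~\ref{HS-A}). Hence showing $m(\pi_{\rm ss},R^{H_l}_{S_0,s_0})=0$ for all Deligne--Lusztig characters already gives $\CD_{l,\psi}(\pi_{\rm ss})=0$; and for $l=n-1$, showing $m(\pi_{\rm ss},\bar\sigma_{s,a})=1$ and $m(\pi_{\rm ss},R^{H_l}_{S_0,s_0})=0$ otherwise already pins down $\CD_{n-1,\psi}(\pi_{\rm ss})$ exactly. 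The paper does precisely this: via Lemma~\ref{red} one is reduced to computing $m(R^G_{T_{\rm a},s},R^G_{S,s'})$, and Theorem~\ref{MF} gives $J(S,T_{\rm a})=\{\jmath_0,\jmath_1\}$ with $m_{\jmath_0}=-1$, $m_{\jmath_1}=2$ when $(S_0,s_0)=(\Ff_{4n-2}^1\times\Ff_2^1,(-s,a))$, and $J(S,T_{\rm a})=\{\jmath_0\}$ otherwise.

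In short: replace your Gelfand--Graev adjunction by Lemma~\ref{red}, and drop the non-uniformity worries entirely (they are vacuous for unitary groups). The proof then becomes a handful of direct evaluations of Theorem~\ref{MF}.
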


To utilize Theorem \ref{MF}, we first reduce the Fourier-Jacobi model to the basic case as follows. 

\begin{lem} \label{red}
Let $\tau \in \Irr(\RG_{l}^F)$ be a cuspidal representation, where $0 < l<2n-1$. For $\sigma\in \Irr(H_l^F)$, put
\[
I(\tau, \sigma) = \Ind^G_{Q} (\tau\otimes\sigma),
\]
where $Q$ is a maximal  parabolic subgroup of $G$ with Levi subgroup $\RG_{l}\times H_{l}$. Then
\[
m(R^G_{T, s}, \sigma) = m(R^G_{T, s}, I(\tau, \sigma)). 
\]
\end{lem}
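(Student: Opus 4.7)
The plan is to pass both sides through Frobenius reciprocity across the parabolic $Q$, and then to exploit the cuspidality of $\tau$ together with the structure of the Weil representation restricted to $Q^F$. First I would apply Frobenius reciprocity to the right-hand side: since $\tau\otimes\sigma$ is inflated from the Levi $M_Q^F = \RG_l^F\times H_l^F$ with trivial $N_Q^F$-action,
\[
m(R^G_{T,s}, I(\tau,\sigma)) = \bigl\langle (R^G_{T,s}\otimes \omega_{4n-2,\psi}^\vee)_{N_Q^F},\ \tau\otimes\sigma\bigr\rangle_{M_Q^F},
\]
which after projecting onto the $\tau$-isotypic part on the $\RG_l^F$-factor becomes
\[
\bigl\langle \Hom_{\RG_l^F}\bigl(\tau,\ (R^G_{T,s}\otimes \omega_{4n-2,\psi}^\vee)_{N_Q^F}\bigr),\ \sigma\bigr\rangle_{H_l^F}.
\]

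The core task is then to identify the inner $H_l^F$-module with the $l$-th Fourier--Jacobi descent $\CD_{l,\psi}(R^G_{T,s})$. Fix a Borel $B_l = T_l Z_l$ of $\RG_l$, so that the parabolic $P_l = (T_l\times H_l)\ltimes(Z_l\cdot N_Q)$ sits inside $Q$ and $N_l = Z_l\cdot N_Q$. The key input is the structure of $(\omega_{4n-2,\psi}^\vee)_{N_Q^F}$ as an $\RG_l^F\times H_l^F$-module: the finite-field analogue of Kudla's filtration of the restriction of the Weil representation to $Q^F$ identifies this Jacquet module, in essence, with
\[
\Ind_{(B_l^-)^F}^{\RG_l^F}(\psi_l^{-1}) \otimes \omega_{4n-2l-2,\psi}^\vee,
\]
where $B_l^-$ denotes the opposite Borel. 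Using the projection formula for the tensor with $R^G_{T,s}$ and Frobenius reciprocity, the Hom-space above then decomposes schematically as
\[
\Hom_{(B_l^-)^F}\bigl(\tau,\ \psi_l^{-1}\bigr) \otimes \bigl(R^G_{T,s}\otimes \omega_{4n-2l-2,\psi}^\vee\bigr)_{(N_l^F,\,\psi_l)},
\]
and Whittaker uniqueness for a cuspidal representation of $\GL_l(\Ff_2)$ makes the first factor one-dimensional, independently of $\tau$. What remains on the $H_l^F$-side is precisely the $(N_l^F,\psi_l)$-twisted Jacquet that defines $\CD_{l,\psi}(R^G_{T,s})$, completing the identification.

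The main obstacle is the structural claim about $(\omega_{4n-2,\psi}^\vee)_{N_Q^F}$. In the $p$-adic setting it is governed by Kudla's filtration of the Weil representation along the unipotent radical of a non-Siegel parabolic; the finite-field analogue must be set up carefully, with attention to the twist by $\psi_l$ and to the fact that $R^G_{T,s}$ is only a virtual character, so that the tensoring and projection steps must be tracked at the character level. Once this Jacquet structure is in hand, the remaining steps --- projection formula, Frobenius reciprocity, and Whittaker uniqueness for cuspidal representations of $\GL_l(\Ff_2)$ --- assemble cleanly into the asserted equality.
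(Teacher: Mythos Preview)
Your strategy is the right one and is essentially the content of \cite[Theorem 16.1]{GGP1}; the paper's own proof simply verifies the Harish--Chandra series hypothesis of that theorem (namely that $R^G_{T,s}=\Ind^G_P R^M_{T,s}$, whose cuspidal support lies on the Siegel Levi $\RG_{2n-1}$, cannot lie in any series $(\RG_l\times L,\tau\otimes\mu)$ with $l<2n-1$) and then cites that result verbatim. So you are re-deriving the cited theorem rather than using it.

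There is, however, a genuine gap in your outline. You pass from the structure of $(\omega_{4n-2,\psi}^\vee)_{N_Q^F}$ as an $M_Q^F$-module to the structure of $(R^G_{T,s}\otimes\omega_{4n-2,\psi}^\vee)_{N_Q^F}$ via a ``projection formula'', but Jacquet functors do not commute with tensor products: knowing $(\omega^\vee)_{N_Q}$ alone is not enough to compute $(\pi\otimes\omega^\vee)_{N_Q}$. What is actually needed is the mixed-model description of $\omega_\psi^\vee\big|_{Q^F}$ as a genuine $Q^F$-module (an induced representation from a subgroup containing $N_Q$), so that the projection formula $\pi\otimes\Ind = \Ind(\pi|\otimes\,\cdot\,)$ applies before taking $N_Q$-coinvariants. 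This is exactly how the argument in \cite{GGP1} proceeds, and it is also where the Harish--Chandra hypothesis enters: the resulting filtration of $(\pi\otimes\omega^\vee)_{N_Q}$ has several graded pieces, and the condition that $\pi$ is not in the series $(\RG_l\times L,\tau\otimes\mu)$ is precisely what forces the $\tau$-isotypic part of the extra pieces to vanish. Your write-up never invokes the hypothesis $0<l<2n-1$, which is a signal that these boundary terms have been silently dropped.

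In short: the ingredients you list (Frobenius reciprocity, the mixed model, Whittaker uniqueness for cuspidal $\tau$) are correct, but you must work with $\omega^\vee\big|_{Q^F}$ rather than $(\omega^\vee)_{N_Q^F}$, track the full filtration, and use the cuspidal-support condition on $R^G_{T,s}$ to discard the remaining terms --- or simply cite \cite[Theorem 16.1]{GGP1} as the paper does.
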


\begin{proof}
It is clear that $R^G_{T, s} = \Ind^G_P ( R^M_{T, s})$ does not belong to the Harish-Chandra series of $(\RG_{l}\times  L, \tau\otimes\mu)$, where $L$ is any Levi subgroup 
of $H_l$ and $\mu$ is any cuspidal representation of $L^F$. Thus the proof of \cite[Theorem 16.1]{GGP1} works verbatim in this case. 
\end{proof}

\begin{proof}(of Theorem \ref{U-des})
When $l=2n-1$,  it follows directly from the uniqueness of Whittaker models for standard modules that
\[
 \CD_{2n-1}(\pi_\reg)=\BC,\quad \CD_{2n-1}(\pi_{\rm ss})=0.
 \]
Below we assume that $0<l<2n-1$. 

We first prove (1).  Let $\tau\in \Irr(\RG^F_{l})$ be a cuspidal representation 
\[
\tau = (-1)^{l+1} R^{\RG_{l}}_{S_1, s_1}, 
\]
where $S_1\in\CT(\RG_{l})$,  $S_1^F\cong \Ff_{2l}^\times$, and $s_1\in S_1^{*, F}$ is regular in $\RG_{l}$. By induction in stages, 
\[
\Ind^G_Q (\tau\otimes  (-1)^{l+1+{\rm rk}\, S_0}R^{H_l}_{S_0, s_0}) =(-1)^{{\rm rk}\, S_0} R^G_{S, s'},
\]
where $Q$ is as in Lemma \ref{red}, $S = S_1\times S_0\in \CT(G)$ and $s'=(s_1, s_0)\in S^{*, F}$.
By  Lemma \ref{red}, it suffices to show that
\[
m(R^G_{T, s}, (-1)^{{\rm rk}\, S_0} R^G_{S, s'})=1.
\]
For this we apply Theorem \ref{MF}. Since $T$ and $S$ have no common rational  factors, $J(S, T)$ consists of a single element $\jmath$ for which $Z_\jmath$ is trivial and $G_\jmath= G$. Thus it is easy to see that
\[
m(R^G_{T, s}, R^G_{S, s'}) =e_{T, S}= (-1)^{{\rm rk}\, S_0},
\]
where $e_{T, S}$ is given by \eqref{eTS}. This proves Theorem \ref{U-des} (1).

Next we prove Theorem \ref{U-des} (2). First consider the case that $l=n-1$, and put
\[
\bar\sigma_{s,a}: = (-1)^n R^{\RU_{2n}}_{S_0, (-s, a)}\in \Irr(\RU_{2n}^F), 
\]
 where $S_0\in \CT(\RU_{2n})$ such that $S_0^F\cong \Ff_{4n-2}^1\times \Ff_2^1$, and $a \in \Ff_2^1$. Similar to the above, let $\tau = (-1)^n R^{\RG_{n-1}}_{S_1, s_1} \in \Irr(\RG^F_{n-1})$ be a cuspidal representation, so that by induction in stages
\[
I(\tau, \bar\sigma_{s,a}) =  R^G_{S, s'_a},
\]
where $S := S_1\times S_0\in \CT(G)$ and $s'_a := (s_1, -s, a)\in S^{*, F} \cong \Ff_{2n-2}^\times \times \Ff_{4n-2}^1 \times \Ff_2^1$.  By Lemma \ref{red} again, 
\[
m(\pi_{\rm ss}, \bar\sigma_{s,a})  = m(\pi_{\rm ss}, I(\tau, \bar\sigma_{s,a})) =  \frac{1}{2}\left(m(R^G_{T, s}, R^G_{S, s'_a})   + m(R^G_{T_{\rm a},s}, R^G_{S, s'_a})\right).
\]
We have seen that $m(R^G_{T, s}, R^G_{S, s'_a})  =1$. Again we apply  Theorem \ref{MF}  to evaluate $ m(R^G_{T_{\rm a},s}, R^G_{S, s'_a})$.
We have $J(S, T_{\rm a})=\{\jmath_0, \jmath_1\}$, for which $Z_{\jmath_0}=\{1\}$, $G_{\jmath_0}=G$ and 
\[
Z_{\jmath_1}^F\cong \Ff_{4n-2}^1,\quad G_{\jmath_1} \cong Z_{\jmath_1} \times \RU_{2n-1}.
\] 
Denote the $\jmath$-summand in Theorem \ref{MF} by $m_\jmath$ for short. Similar to the above, we see that
\[
m_{\jmath_0} = e_{T_{\rm a}, S}= -1.
\]
To compute $m_{\jmath_1}$, we may conjugate $T_{\rm a}$ into $G_{\jmath_1}$, and  assume that $Z_{\jmath_1}^F$ is the first factor of 
$T_{\rm a}^F \cong \Ff_{4n-2}^1\times \Ff_{4n-2}^1$. Recall that $s$ embeds into $T_{\rm a}^{*, F}$ diagonally as $(s, s)$.  We find that
\[
\left|\Set{w\in W_G(T_{\rm a})^F | w(s, s)|_{Z_{\jmath_1}^F} =s} \right| = 2(2n-1) = 2|W_{G_{\jmath_1}}(T_{\rm a})^F|.
\]
From this it follows easily that
\[
m_{\jmath_1} = e_{T_{\rm a}, S} (-1)^{l(Z_{\jmath_1})}\cdot 2 =2. 
\]
Thus
\[
m(R^G_{T_{\rm a}, s}, R^G_{S, s'_a}) =m_{\jmath_0} + m_{\jmath_1} =1. 
\]
This proves that $m(\pi_{\rm ss}, \bar\sigma_{s,a})=1$.

%To finish the proof, we use the fact that  virtual characters on finite unitary groups are all uniform. 
Note that the summand in \eqref{FJ-U} corresponding to $a\in \Ff_2^1$, which we denote by 
$\bar\sigma_{s,a}$ in the above, is the unique member of its own Lusztig series (see \cite{L2, L3} for this notion).  Thus it remains to show that for any  character of $H_l^F$ of the form
 \[
 \sigma = (-1)^{l+1+{\rm rk}\, S_0}R^{H_l}_{S_0, s_0},\quad S_0\in \CT(H_l), \quad s_0 \in S_0^{*, F},
 \]
where $n-1\leq l< 2n-1$ and $\sigma$ is not isomorphic to any summand $\bar\sigma_{s,a}$ in \eqref{FJ-U} when $l=n-1$, it holds that
\[
m(\pi_{\rm ss}, \sigma) = 0.
\]
Applying  Lemma \ref{red} again, it is reduced to proving that
\begin{equation} \label{Zero}
m(\pi_{\rm ss}, R^G_{S, s'})=0,
\end{equation} 
where $S = S_1\times S_0$, $S_1\in \CT(\RG_l)$, $S_1^F\cong \Ff_{2l}^\times$, $s_1\in S_1^{*, F}$ and $s'=(s_1, s_0)\in S^{*, F}$. Using  Theorem \ref{MF}, similar calculations show that in this case
\[
m(R^G_{T, s}, R^G_{S, s'}) = e_{T, S}  = -e_{T_{\rm a}, S} = - m(R^G_{T_{\rm a}, s}, R^G_{S, s'}),
\]
which implies \eqref{Zero}, hence proves Theorem \ref{U-des} (2). 
\end{proof}

\subsection{Symplectic groups} \label{ssec6.2} For the second example, we let $G=\Sp_{4n} $ be defined over $\Ff$. Let $P$ be the Siegel parabolic subgroup of $G$ with Levi subgroup
$M = \GL_{2n}$. Let $L\cong \GL_n\times \GL_n$ be a Levi subgroup of $M$. A representation $\tau$ of $M^F$ is said to have a {\it linear model} if 
\[
\Hom_{L^F}(\tau, \mathbbm{1})\neq 0. 
\]
It is known that a cuspidal representation $\tau\in \Irr(M^F)$  has a linear model if and only if $\tau$ is self-dual, i.e. $\tau\cong\tau^\vee$. 
   Let $T \in \CT(M)$ be elliptic (i.e. $T^F\cong \Ff_{2n}^\times$), and let $s\in T^{*,F}$ be regular in $M^*$, so that $ - R^M_{T, s} \in \Irr(M^F)$ is cuspidal. By \cite{L4} (see also \cite{H1}), $-R^M_{T, s}$ has a  linear model if and only if 
\begin{equation} \label{LM}
s \in \Ff_{2n}^1.
\end{equation} 
In this subsection, we assume that \eqref{LM} holds. We mention that, in \cite{LMNW} a criterion for the existence of  linear models is  given for general irreducible Deligne-Lusztig characters of $M^F$ which are not necessarily cuspidal.  

Let $T_{\rm a}\in \CT(G)$ such that 
$T_{\rm a}^F\cong \Ff_{2n}^1\times \Ff_{2n}^1$, and embed $s$ into $T_{\rm a}$ diagonally. We have 
\begin{equation} \label{Dec-SP}
\Ind^G_P(-R^M_{T, s}) = -R^G_{T, s} =\pi_\reg \oplus  \pi_{\rm ss},
\end{equation}
where
\begin{equation} \label{reg-ss-SP}
 \pi_\reg :=  \frac{-R^G_{T,s} + R^G_{T_{\rm a}, s}}{2},\quad \textrm{and}\quad \pi_{\rm ss} :=  \frac{-R^G_{T,s} - R^G_{T_{\rm a}, s}}{2}
\end{equation} 
are irreducible representations of $G^F$. Again $\pi_\reg$ is generic and $\pi_{\rm ss}$ is not. 

For $0\leq l\leq 2n$, let $P_l=M_l N_l\subset P$ be the standard parabolic subgroup of $G$ with Levi subgroup $M_l \cong \GL_1^l \times H_l$, where $H_l :=\Sp_{4n-2l}$. Similar to the  case of unitary groups, one can define a representation $\nu_{l, \psi}$ of 
\[
R_l:= H_l \ltimes N_l,
\]
which is the Weil representation $\omega_{n,\psi}$ of $G^F$ in the case that $l=0$. For virtual characters $\pi$ and $\sigma$ of $G^F$ and $H_l^F$ respectively, one defines the Fourier-Jacobi model
\[
m(\pi,\sigma):= \langle \pi\otimes \nu_{l,\psi}^\vee, \sigma\rangle_{R_l^F}. 
\]
Again we refer to \cite{GGP1} for details. If $\pi$ is a representation of $G^F$, the $l$-th Fourier-Jacobi descent of $\pi$ along $\psi$ is Jacquet module of $\pi\otimes \nu_{l,\psi}^\vee$ with respect to $N_l^F$, which is a representation of $H_l^F$, denoted by $\CD_{l,\psi}(\pi)$. In  this case $m(\pi, \sigma) = \langle \CD_{l, \psi}(\pi),\sigma\rangle_{H_l^F}$. 

We have the following result for this example.  
 
 \begin{thm} \label{SP-des}
 Let $-R^G_{T, s} = \pi_\reg + \pi_{\rm ss}$ be given by \eqref{Dec-SP}.  Then
 
 \begin{enumerate}
\item For $0< l\leq 2n$ and any Deligne-Lusztig character $R^{H_l}_{S_0, s_0}$ of $H_l^F$, it holds that
\[
m(-R^G_{T, s}, (-1)^{l+{\rm rk}\, S_0}R^{H_l}_{S_0, s_0})= 1.
\] 

\item $\CD_{l, \psi}(\pi_{\rm ss})=0$ for $n<l\leq 2n$, and 
\begin{equation} \label{FJ-SP}
 \CD_{n , \psi}(\pi_{\rm ss}) = (-1)^n R^{\Sp_{2n}}_{S_0,  - s},
\end{equation}
where $S_0\in \CT(\Sp_{2n})$ such that $S_0^F\cong \Ff_{2n}^1$. %and $(\cdot)^\sharp$ denotes the   projection to the space of uniform functions.
 \end{enumerate}
 \end{thm}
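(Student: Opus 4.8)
The two examples are entirely parallel, so the plan is to transcribe the proof of Theorem \ref{U-des} with the substitutions $G=\Sp_{4n}$, $M=\GL_{2n}$, $H_l=\Sp_{4n-2l}$, using the decomposition \eqref{Dec-SP}--\eqref{reg-ss-SP} in place of \eqref{Dec}--\eqref{reg-ss}. First I would record the symplectic analogue of Lemma \ref{red}: for a cuspidal $\tau\in\Irr(\GL_l^F)$ with $0<l<2n$, for $\sigma\in\Irr(H_l^F)$, and with $I(\tau,\sigma):=\Ind^G_Q(\tau\otimes\sigma)$ for $Q$ a maximal parabolic of $G$ with Levi $\GL_l\times H_l$, one has $m(\pi,\sigma)=m(\pi,I(\tau,\sigma))$ for every virtual character $\pi$ whose constituents all occur in $\Ind^G_P(-R^M_{T,s})$ --- in particular for $\pi\in\{-R^G_{T,s},\,-R^G_{T_{\rm a},s},\,\pi_{\rm ss}\}$ by \eqref{Dec-SP}--\eqref{reg-ss-SP}. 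Indeed the cuspidal support of $-R^M_{T,s}$ is the single cuspidal $\GL_{2n}$-block, which is never $G^F$-conjugate to a Harish-Chandra datum $(\GL_l\times L,\tau\otimes\mu)$ with $L$ a Levi of $H_l$ and $\mu$ cuspidal; hence $-R^G_{T,s}$ lies in no such Harish-Chandra series, and the proof of \cite[Theorem 16.1]{GGP1} applies verbatim.

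The endpoint $l=2n$ I would treat directly: then $H_{2n}$ is trivial, $N_{2n}$ is the full maximal unipotent of $\Sp_{4n}$, and $\nu_{2n,\psi}$ is a generic character of $N_{2n}^F$ (the Whittaker character of $\GL_{2n}$ on $Z_{2n}$ together with the contribution to the long simple root coming from the centre of the degenerate Heisenberg factor). By uniqueness of Whittaker models for standard modules, $\CD_{2n,\psi}(\pi_\reg)=\BC$ and $\CD_{2n,\psi}(\pi_{\rm ss})=0$; this gives part (1) at $l=2n$ (where $\sigma$ is forced to be trivial) and the vanishing in part (2) at $l=2n$.

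Now let $0<l<2n$. For part (1), take $\tau=(-1)^{l+1}R^{\GL_l}_{S_1,s_1}$ cuspidal ($S_1^F\cong\Ff_l^\times$, $s_1$ regular in $\GL_l^*$); induction in stages gives $I\bigl(\tau,(-1)^{l+{\rm rk}\,S_0}R^{H_l}_{S_0,s_0}\bigr)=-(-1)^{{\rm rk}\,S_0}R^G_{S,s'}$ with $S=S_1\times S_0$, $s'=(s_1,s_0)$. Apply the reduction lemma to $-R^G_{T,s}$ and then Theorem \ref{MF}: as $T^F\cong\Ff_{2n}^\times$ is a single rational block of size $2n$ while every rational block of $S$ has size $<2n$, the tori $T$ and $S$ have no common rational factor, so $J(S,T)$ is a single element $\jmath$ with $Z_\jmath$ trivial and $G_\jmath=G$; hence $m(R^G_{T,s},R^G_{S,s'})=e_{T,S}=(-1)^{{\rm rk}\,S_0}$ (using ${\rm rk}\,T={\rm rk}\,S_1=1$), and the signs combine to give $m(-R^G_{T,s},(-1)^{l+{\rm rk}\,S_0}R^{H_l}_{S_0,s_0})=(-1)^{{\rm rk}\,S_0}e_{T,S}=1$. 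For part (2), write $\pi_{\rm ss}=\tfrac12(-R^G_{T,s}-R^G_{T_{\rm a},s})$ and, after passing to $I(\tau,\sigma)$, evaluate $m(R^G_{T,s},R^G_{S,s'})$ and $m(R^G_{T_{\rm a},s},R^G_{S,s'})$ by Theorem \ref{MF}.

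For the distinguished character $\sigma=(-1)^nR^{\Sp_{2n}}_{S_0,-s}$ with $l=n$ and $S_0^F\cong\Ff_{2n}^1$: the $R^G_{T,s}$-term is $m(R^G_{T,s},R^G_{S,s'})=e_{T,S}=1$ as above, while $J(S,T_{\rm a})=\{\jmath_0,\jmath_1\}$ with $Z_{\jmath_0}$ trivial ($G_{\jmath_0}=G$, contributing $e_{T_{\rm a},S}=-1$) and $Z_{\jmath_1}^F\cong\Ff_{2n}^1$, $G_{\jmath_1}\cong Z_{\jmath_1}\times\Sp_{2n}$; here $|W_{G_{\jmath_1}}(T_{\rm a})^F|=2n$ (the Weyl group of the Coxeter torus $\Ff_{2n}^1$ of $\Sp_{2n}$), $\bigl|\{w\in W_G(T_{\rm a})^F\mid w(s,s)|_{Z_{\jmath_1}^F}=s\}\bigr|=4n$ (the regular element $s$ having trivial stabilizer in that Weyl group, with an extra factor $2$ from the swap of the two $\Ff_{2n}^1$-factors of $T_{\rm a}$), and $\vartheta_n'$ is the character of $\Ff_{2n}^1$ attached to $-1$ so that $s\cdot\vartheta_n'=-s$; thus the $\jmath_1$-summand equals $e_{T_{\rm a},S}(-1)^{l(Z_{\jmath_1})}\cdot 2=2$, and $m(R^G_{T_{\rm a},s},R^G_{S,s'})=-1+2=1$. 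Since ${\rm rk}\,T+{\rm rk}\,T_{\rm a}$ is odd, $e_{T,S}=-e_{T_{\rm a},S}$, so $m(\pi_{\rm ss},\sigma)=\tfrac12\bigl(m(R^G_{T,s},R^G_{S,s'})+m(R^G_{T_{\rm a},s},R^G_{S,s'})\bigr)=\tfrac12(1+1)=1$. For any other $\sigma=(-1)^{l+{\rm rk}\,S_0}R^{H_l}_{S_0,s_0}$ with $n\le l<2n$, the torus $S_0$ carries no $\Ff_{2n}^1$-block $W$-conjugate to $-s$, so $J(S,T_{\rm a})=\{\jmath_0\}$ and $m(R^G_{T_{\rm a},s},R^G_{S,s'})=e_{T_{\rm a},S}=-e_{T,S}=-m(R^G_{T,s},R^G_{S,s'})$, giving $m(\pi_{\rm ss},\sigma)=0$. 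Since $s$, hence $-s$, is regular in $\Ff_{2n}$, the representation $(-1)^nR^{\Sp_{2n}}_{S_0,-s}$ is irreducible cuspidal and the unique member of its Lusztig series, so these multiplicities determine $\CD_{l,\psi}(\pi_{\rm ss})$, which vanishes for $n<l\le 2n$ and equals $(-1)^nR^{\Sp_{2n}}_{S_0,-s}$ for $l=n$. I expect the only delicate point, as in the unitary case, to be this last inference --- that vanishing of $m(\pi_{\rm ss},\cdot)$ against Deligne--Lusztig test characters together with the one nonzero value pins down the honest representation $\CD_{n,\psi}(\pi_{\rm ss})$ (controlling which Lusztig series it can meet) --- together with the sign bookkeeping throughout; everything else is a direct transcription of the proof of Theorem \ref{U-des}.
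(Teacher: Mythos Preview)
Your proposal is correct and follows exactly the approach the paper intends: the paper's own proof simply states that the argument is similar to that of Theorem \ref{U-des}, records Lemma \ref{red-SP} (your symplectic analogue of Lemma \ref{red}), and omits all details. Your transcription of the sign bookkeeping, the $J(S,T)$ and $J(S,T_{\rm a})$ computations, and the final Lusztig-series argument are all accurate; the only slight imprecision is the phrase ``$S_0$ carries no $\Ff_{2n}^1$-block $W$-conjugate to $-s$'' --- when $l=n$ and $S_0^F\cong\Ff_{2n}^1$ but $s_0$ is not Galois-conjugate to $-s$, one still has $J(S,T_{\rm a})=\{\jmath_0,\jmath_1\}$, but the $\jmath_1$-summand vanishes because the character pairing on $Z_{\jmath_1}^F$ is zero, which yields the same conclusion.
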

 
 The proof of Theorem \ref{SP-des} is similar to that of Theorem \ref{U-des}. One can first reduce  to the basic case using the following lemma, and then apply Theorem
 \ref{MF}. The details will be omitted. 
 
 \begin{lem} \label{red-SP}
Let $\tau \in \Irr(\GL_{l}^F)$ be a cuspidal representation, where $0 < l<2n$. For $\sigma\in \Irr(H_l^F)$, put
\[
I(\tau, \sigma) = \Ind^G_{Q} (\tau\otimes\sigma),
\]
where $Q$ is a maximal  parabolic subgroup of $G$ with Levi subgroup $\GL_{l}\times H_{l}$. Then
\[
m(-R^G_{T, s}, \sigma) = m(-R^G_{T, s}, I(\tau, \sigma)). 
\]
\end{lem}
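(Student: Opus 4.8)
The plan is to transport the proof of Lemma \ref{red} verbatim from the unitary to the symplectic setting; the only genuinely new input needed is a statement about disjointness of Harish-Chandra series, which turns out to be immediate. First I would recall the mechanism behind \cite[Theorem 16.1]{GGP1}: for an irreducible representation $\pi$ of $G^F$ and a cuspidal representation $\tau$ of $\GL_l(\Ff)$, the Fourier-Jacobi multiplicity $m(\pi, \Ind^G_Q(\tau\otimes\sigma))$, with $Q$ the maximal parabolic with Levi $\GL_l\times H_l$, unfolds --- via the twisted Jacquet functor along $N_l^F$ against $\nu_{l,\psi}$ and Mackey theory --- into a sum of terms, exactly one of which equals $m(\pi,\sigma)$, while each of the remaining ones is controlled by the occurrence of $\pi$ in a Harish-Chandra series attached to a pair $(\GL_l\times L, \tau\otimes\mu)$ with $L$ a proper Levi of $H_l$ and $\mu$ cuspidal on $L^F$. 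Hence the identity $m(-R^G_{T,s},\sigma)=m(-R^G_{T,s},I(\tau,\sigma))$ will follow as soon as no irreducible constituent of $-R^G_{T,s}$ lies in any such series.

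The decisive step is then the observation that $-R^G_{T,s}=\Ind^G_P(-R^M_{T,s})$ with $M=\GL_{2n}$ the Siegel Levi and $-R^M_{T,s}\in\Irr(M^F)$ cuspidal, so that the cuspidal support of every irreducible constituent of $-R^G_{T,s}$, and in particular of $\pi_\reg$ and $\pi_{\rm ss}$, is the pair $(\GL_{2n}, -R^M_{T,s})$ up to $G^F$-conjugacy. Since $0<l<2n$ and $L$ is a Levi subgroup of $\Sp_{4n-2l}$, hence of the shape $\GL_{a_1}\times\cdots\times\GL_{a_k}\times\Sp_{2b}$, no $G^F$-conjugate of $\GL_l\times L$ can equal the Siegel Levi $\GL_{2n}$; the Harish-Chandra series in question are therefore disjoint from the one containing $-R^G_{T,s}$, and the unfolding above leaves only the term $m(-R^G_{T,s},\sigma)$, which is what we want.

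The main obstacle is purely bookkeeping: one must pin down the finite-field avatars of the objects in \cite{GGP1} --- the quotient $N_l\twoheadrightarrow Z_l\times\BH_l$, the generic character $\psi_l$ of $Z_l^F$, the Weil representation of $H_l^F$, and the twisted Jacquet functor $\CD_{l,\psi}$ --- and confirm that, once integration over unipotent radicals is replaced by averaging over finite unipotent groups and all exactness assertions become automatic over the finite group $G^F$, the argument of \cite[Theorem 16.1]{GGP1} (together with the reciprocity of \cite{JZ}) carries over word for word. Since this was already carried out for the unitary analog in Lemma \ref{red}, I would merely record the dictionary $M=\GL_{2n}$, $H_l=\Sp_{4n-2l}$, $Q$ the maximal parabolic with Levi $\GL_l\times H_l$, and omit the routine verification.
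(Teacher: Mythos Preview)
Your proposal is correct and follows exactly the approach the paper intends: the paper does not give a separate proof of this lemma at all, stating only that the proof of Theorem \ref{SP-des} (and implicitly of this lemma) is ``similar'' to the unitary case, with details omitted. Your write-up is in fact more detailed than what the paper provides for Lemma \ref{red} itself, where the proof is simply the observation that $R^G_{T,s}=\Ind^G_P(R^M_{T,s})$ lies in no Harish-Chandra series of the form $(\GL_l\times L,\tau\otimes\mu)$ with $L$ a Levi of $H_l$, after which \cite[Theorem 16.1]{GGP1} applies verbatim; you have unpacked both why the series are disjoint (the Siegel Levi $\GL_{2n}$ cannot be $G^F$-conjugate to $\GL_l\times L$ for $0<l<2n$) and why this suffices. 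One small remark: in the paper's version of the argument $L$ ranges over \emph{all} Levi subgroups of $H_l$, not just proper ones, but your disjointness argument covers that case as well since $\GL_l\times H_l$ with $l<2n$ is still not conjugate to $\GL_{2n}$.
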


\begin{comment}
In Theorem \ref{SP-des} (2), we only give the uniform part of the descent
 $\CD_{n, \psi}(\pi_{\rm ss})$.  In view of \cite{JNQ}, it is expected that 
 \[
 \CD_{n, \psi}(\pi_{\rm ss}) = \CD_{n, \psi}(\pi_{\rm ss})^\sharp =  (-1)^n R^{\Sp_{2n}}_{S_0,  - s}
 \]
 is irreducible. This will not be further discussed here. The current result is sufficient for the application to the $p$-adic setting. In particular we see that
 \[
m\left(\pi_{\rm ss},  (-1)^n R^{\Sp_{2n}}_{S_0,  - s }\right)=1,
 \]
 which will be used to study the depth zero local descent for ramified unitary groups. 
 \end{comment}
 
 %\part{$p$-Adic Unitary Groups}

\section{Local descent for $p$-adic unitary groups}  \label{sec-LDPUG}

In the rest of this paper, we consider unitary groups over $p$-adic local fields. From now on we  systematically change the notations and write ${\sf G}, {\sf T},  {\sf U}_n, {\sf GL}_n$ etc. for algebraic groups over a finite field $\Ff$, and reserve  $G, T, \RU_n, \GL_n$ etc. for $p$-adic groups. 

This section is basically expository. Let $\RE /\RF$ be a quadratic extension of $p$-adic local fields, with  odd residue characteristic $p$. Let $\iota \in \Gal(\RE/\RF)$ be the nontrivial Galois automorphism.  In this section we first collect some  results on irreducible representations of $\GL_m(\RE)$ distinguished by the Galois involution, namely, by the subgroup $\GL_m(\RE)^\iota = \GL_m(\RF)$. Then we recall  the local descent of irreducible distinguished supercuspidal representations to unitary groups from \cite{ST}, and explain the connections with the Langlands functoriality and the non-tempered local GGP conjecture \cite{GGP2}. Finally we give the distinction criterion in the depth zero case following \cite{CG, HM1}.

\subsection{Galois distinction} 

Let $\omega_{\RE/\RF}$ be the quadratic character of $\RF^\times$ given by the local class field theory, and fix a character $\mu$ of $\RE^\times$ such that $\mu|_{\RF^\times} = \omega_{\RE/\RF}$. By composition with determinant maps, we regard $\omega_{\RE/\RF}$ and $\mu$ as characters of $\GL_m(\RF)$ and $\GL_m(\RE)$ respectively.  Let $\tau$ be an irreducible admissible representation $\tau$ of $\GL_m(\RE)$. We are concerned with the following notions: 
\begin{itemize}
\item $\tau$ is called {\it distinguished} (resp. {\it $\omega_{\RE/\RF}$-distinguished}) if 
\[
\Hom_{\GL_m(\RF)}(\tau, \mathbbm{1})\neq 0 \quad (\textrm{resp. } \Hom_{\GL_m(\RF)}(\tau, \omega_{\RE/\RF})\neq 0),
\]
in which case the Hom space has dimension one. 
Note that $\tau$ is $\omega_{\RE/\RF}$-distinguished if and only if $\tau\otimes\mu$ is distinguished. 
\item $\tau$ is called {\it conjugate self-dual} if $\tau^\vee\cong \tau^\iota$. 
\end{itemize}
It is well-known that  (e.g. \cite{F, Pr1}) if $\tau$ is distinguished, then $\tau$ is conjugate self-dual and the central character of $\tau$ is trivial on $\RF^\times$.  

Let $\tau$ be an irreducible square-integrable conjugate self-dual representation of $\GL_m(\RE)$, whose central character is trivial on $\RF^\times$. By \cite{K, AKT}, if  $m$ is odd,  then $\tau$ is distinguished; if $m$ is even, then exactly one of $\tau$ and $\tau\otimes\mu$ is distinguished. We have that
\[
L({\sfs}, \tau\times \tau^\iota) = L( {\sfs}, \tau, {\rm As}) L({\sfs}, \tau\otimes\mu, {\rm As}),
\]
where the left hand side is the Rankin-Selberg L-function, and factors in the right hand side are the Asai L-functions, with ${\rm As} ={\rm As}^+$.  See e.g. \cite[\S7]{GGP1} for the definition of two Asai representations ${\rm As}^+$ and ${\rm As}^-$, which correspond to the stable and unstable base change  ${}^L \RU_m \to {}^L \GL_m$ respectively (\cite{F, KK}). It follows from \cite{K} that $L( {\sfs}, \tau, {\rm As})$ has a pole at ${\sfs} =0$ if and only if $\tau$ is distinguished. 

\subsection{Local descent and non-tempered GGP conjecture} \label{ssec-LDGGP}
Assume from now on that $\tau$ is irreducible distinguished and is moreover supercuspidal. Then $\tau$ (resp. $\tau\otimes\mu^{-1}$) is a stable base change of an irreducible generic supercuspidal representation of a quasi-split $\RU_m(\RF)$ for $m$ odd (resp. even). Indeed, let $M_\tau$ be the L-parameter of $\tau$ under the local Langlands correspondence, which is an $m$-dimensional  irreducible  representation of the Weil group $W_\RE$, extended trivially to the Weil-Deligne group $WD_\RE = W_\RE\times \SL_2(\BC)$. Then ${\rm As}^+(M_\tau)^{WD_\RE}\neq 0$.  By 
\cite[Prop. 7.5]{GGP1}, $M_\tau$ is conjugate orthogonal and  $M_\tau(\mu^{-1}):=M_\tau\otimes\mu^{-1}$ is conjugate symplectic, which give an L-parameter of $\RU_m(\RF)$ for $m$ odd and even respectively. Here $\mu$ is viewed as a character of $W_\RE$ via the local class field theory.

From the representation $\tau$ of $\GL_m(\RE)$, the Fourier-Jacobi local descent  construction in \cite{ST} produces generic supercuspidal representations of $\RU_{2n}(\RF)$, where $n:=\lfloor\frac{m+1}{2}\rfloor$. In particular, it realizes the inverse map to the above base change for the representation $\tau^\vee \otimes \mu$ when $m=2n$ is even. We shall briefly recall the construction below, with slightly different notation and convention.  

Let $P$ be the Siegel parabolic subgroup of $\RU_{2m}(\RF)$ with Levi subgroup $M \cong \GL_m(\RE)$. For ${\sfs}\in\BC$, define the normalized parabolic induction 
\begin{equation} \label{PI}
\rho_{\tau, {\sfs}} := \Ind^{\RU_{2m}(\RF)}_P \tau_{\sfs},\quad \tau_{\sfs}:= \tau\otimes |\det|_\RE^{{\sfs}-1/2},
\end{equation}
where $|\cdot|_\RE$ is the normalized absolute value on $\RE$. Then $\rho_{\tau, 1}$ is of length two, and we have a short exact sequence 
\begin{equation}  \label{SES}
 0 \longrightarrow \rho_\tau  \longrightarrow   \rho_{\tau, 1}  \longrightarrow  \pi_\tau  \longrightarrow  0
\end{equation} 
where the subrepresentation $\rho_\tau$ of $\rho_{\tau, 1}$ is generic, and the Langlands quotient $\pi_\tau$ of $\rho_{\tau, 1}$ is non-generic. 

Fix a nontrivial additive character $\psi_\RF$ of $\RF$. 
%and put \[ \psi_E: E\to \BC^\times, \quad \psi_E(x) = \psi_F\left(\frac{1}{2}\tr_{\RE/\RF}(x)\right). \]
For $0\leq l\leq m$, let $P_l=M_l N_l \subset P$ be the standard parabolic subgroup of $\RU_{2m}(\RF)$ with Levi subgroup $M_l \cong (\RE^\times)^l\times \RU_{2m-2l}(\RF)$. Similar to the finite field case, for $l>0$ we have 
a homomorphism 
$
N_l \twoheadrightarrow Z_l \times \BH_l,
$
where $Z_l$ is a maximal unipotent subgroup of $\GL_l(\RE)$ and $\BH_l$ is the Heisenberg group associated to $\RU_{2m-2l}$. 
 As in \cite{GGP1}, associated to 
 $\psi_\RF$ and the character $\mu$, one has an irreducible unitary representation 
 \[
 \nu_{l, \psi_\RF, \mu} = \psi_l \otimes \omega_{2m-2l, \psi_\RF, \mu}
 \]
 of $R_l:=\RU_{2m-2l}(\RF)\ltimes N_l$. Here $\psi_l$ is the generic character of $Z_l$ defined using $\psi_\RF$, and $\omega_{2m-2l, \psi_\RF, \mu}$ is the Weil representation 
 of $\RU_{2m-2l}(\RF)\ltimes \BH_l$ defined using $\psi_\RF$ and $\mu$. 
 
 \begin{defn}
 The  $l$-th  Fourier-Jacobi local descent of a smooth representation $\pi$ of $\RU_{2m}(\RF)$,  denoted by $\CD_{l, \psi_\RF, \mu}(\pi)$, is  a smooth representation of $\RU_{2m-2l}(\RF)$ defined as the Jacquet module of $\pi\otimes \nu_{l, \psi_\RF, \mu}^\vee$ with respect to $N_l$.
 \end{defn}

\begin{rmk}  \label{ST-rmk}
Our formulation follows  {\rm \cite{GGP1}}, and differs from {\rm \cite{ST}} by taking the contragredient of the Weil representation. This can be easily rectified as follows. 
\begin{enumerate}
\item
The Whittaker datum for $\RU_{2m-2l}(\RF)$, as well as the Weil representation $\omega_{2m-2l, \psi_\RF, \mu}$ {\rm (}for fixed $\mu${\rm)}, is determined by the choice of $\psi_\RF$ up to a twist by an element of $N_{\RE/\RF}(\RE^\times)$. 
\item We can use   that $\omega_{2m-2l, \psi_\RF, \mu}^\vee \cong \omega_{2m-2l, \psi_\RF^{-1}, \mu^{-1}}$ to translate the  results from {\rm \cite{ST}} accordingly. 
\end{enumerate}
\end{rmk}

In view of Remark \ref{ST-rmk}, we can state the following

\begin{thm}[\cite{ST}] \label{ST-thm}
Let $\tau$ be an irreducible supercuspidal representation of $\GL_m(\RE)$ such that $L({\sfs}, \tau, {\rm As})$ has a pole at ${\sfs} =0$. Put $n=\lfloor\frac{m+1}{2}\rfloor$ and $l_0=m-n =\lfloor \frac{m}{2} \rfloor$. Then the following hold.
\begin{enumerate}
\item $\CD_{l, \psi_\RF, \mu}(\pi_\tau)=0$ for $l>l_0$, and $\CD_{l_0, \psi_\RF, \mu}(\pi_\tau)\neq 0$ is the multiplicity free direct sum of irreducible $\psi_\RF^{-1}$-generic supercuspidal representations $\sigma$ of $\RU_{2n}(\RF)$ such that the local gamma factor $\gamma(s, \sigma^\vee\times (\tau\otimes\mu^{-1}), \psi_\RF^{-1})$ has a pole at $s=1$.

\item If $m=2n$ is even, then $\CD_{n, \psi_\RF, \mu}(\pi_\tau)$ is irreducible. 
\end{enumerate}
\end{thm}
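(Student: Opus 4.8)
The plan is to follow the descent method of Ginzburg--Rallis--Soudry, in the form adapted to unitary groups in \cite{ST}, analysing the Fourier--Jacobi coefficients of the Langlands quotient $\pi_\tau$ through the structure of the Siegel-induced representation $\rho_{\tau,1}$ of \eqref{PI}--\eqref{SES}. Two structural inputs carry most of the weight: first, $\tau$ is supercuspidal, so every Jacquet module of $\tau$ along a proper parabolic of $\GL_m(\RE)$ vanishes; second, the pole of $L(\sfs,\tau,{\rm As})$ at $\sfs=0$ is equivalent to $\tau$ being $\GL_m(\RF)$-distinguished, i.e.\ to the existence of a nonzero $\GL_m(\RF)$-invariant functional on $\tau$. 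The first governs the vanishing statements, the second the nonvanishing.

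For the upper bound in (1), I would compute the Jacquet module of $\rho_{\tau,1}\otimes\nu_{l,\psi_\RF,\mu}^\vee$ with respect to $N_l$ by the Bernstein--Zelevinsky geometric lemma: stratify by the double cosets $P\backslash\RU_{2m}(\RF)/R_l$, observe that supercuspidality of $\tau$ annihilates every stratum except those attached to the longest Weyl elements, and carry out a root/dimension count showing that no $R_l$-equivariant functional survives once $l>l_0$; hence $\CD_{l,\psi_\RF,\mu}(\rho_{\tau,1})=0$ and a fortiori $\CD_{l,\psi_\RF,\mu}(\pi_\tau)=0$ for such $l$. For nonvanishing at $l=l_0$, I would instead argue directly: a see-saw identity for the dual pair $(\RU_{2m}(\RF),\RU_{2n}(\RF))$ produces a nonzero $R_{l_0}$-equivariant functional on $\pi_\tau\otimes\nu_{l_0,\psi_\RF,\mu}^\vee$ out of the linear (Galois) model of $\tau$ over $\GL_m(\RF)$, which is available because $\tau$ is distinguished; tracking through the stratification from the upper-bound step confirms that this functional is carried by the quotient $\pi_\tau$ and not merely by the subrepresentation $\rho_\tau$ of \eqref{SES}.

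It then remains to identify $\CD_{l_0,\psi_\RF,\mu}(\pi_\tau)$ as a multiplicity-free sum of $\psi_\RF^{-1}$-generic supercuspidals cut out by the gamma-factor condition. Supercuspidality: compute the Jacquet modules of the descent along the maximal parabolics of $\RU_{2n}(\RF)$, using exactness of the Jacquet functor together with the geometric-lemma computation and the supercuspidality of $\tau$, and check that they vanish. Genericity: composing the $l_0$-th Fourier--Jacobi coefficient with a $\psi_\RF^{-1}$-Whittaker functional of $\RU_{2n}(\RF)$ unfolds, along the unipotent radicals, to a nonzero Whittaker functional on $\rho_{\tau,1}$, the additive-character bookkeeping yielding precisely $\psi_\RF^{-1}$. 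The constituents and multiplicity one I would read off from the local Rankin--Selberg theory for $\RU_{2n}(\RF)\times\GL_m(\RE)$: the Fourier--Jacobi period $\Hom_{R_{l_0}}(\pi_\tau\otimes\nu_{l_0,\psi_\RF,\mu}^\vee,\sigma)$ is, up to normalisation, the local zeta integral for $\sigma^\vee\times(\tau\otimes\mu^{-1})$, and its nonvanishing at the pertinent point is governed by a pole of $\gamma(s,\sigma^\vee\times(\tau\otimes\mu^{-1}),\psi_\RF^{-1})$ at $s=1$; multiplicity one follows from the uniqueness of Fourier--Jacobi (equivalently Bessel) functionals for unitary groups --- the local Gan--Gross--Prasad multiplicity-one theorem.

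Finally, for (2): when $m=2n$ one has $l_0=n$, the descent already lives on $\RU_{2n}(\RF)$, and the twisted parameter $M_\tau(\mu^{-1})$ is conjugate symplectic, hence a discrete --- in fact supercuspidal --- $L$-parameter of $\RU_{2n}(\RF)$, whose $\psi_\RF^{-1}$-generic member is unique; the gamma-factor criterion then selects a single $\sigma$, so the multiplicity-free sum collapses to one irreducible representation. I expect the main obstacle throughout to be the precise identification of the constituents, which needs the full local Rankin--Selberg/gamma-factor machinery for unitary groups together with the local GGP multiplicity-one statement; and, because $\pi_\tau$ is the \emph{non-tempered} Langlands quotient rather than a generic representation, all of the Jacquet-module bookkeeping has to be carried out on $\rho_{\tau,1}$ and transported along \eqref{SES} with care.
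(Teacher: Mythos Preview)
The paper does not supply its own proof of this theorem: it is stated with the attribution \cite{ST} and used as a black box. There is therefore no argument in the paper to compare your proposal against. Your outline is broadly a faithful sketch of the Soudry--Tanay method itself (geometric lemma on the Siegel induction, supercuspidality of $\tau$ to kill strata, the Galois model for nonvanishing, unfolding for genericity, Rankin--Selberg/gamma-factor input to identify constituents, and Fourier--Jacobi multiplicity one), so in that sense it is ``the same approach as the paper'' only by transitivity through \cite{ST}.

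One caution on part (2): your argument that the descent is irreducible when $m=2n$ appeals to the local Langlands correspondence for $\RU_{2n}(\RF)$ to say the $\psi_\RF^{-1}$-generic member of the packet of $M_\tau(\mu^{-1})$ is unique. That is logically circular in the context of \cite{ST}, whose purpose is precisely to furnish an explicit inverse to base change \emph{prior} to (and as input for) such classification statements. In \cite{ST} the irreducibility for even $m$ is obtained internally, via the local functional equation and the gamma-factor characterisation, rather than by invoking $L$-packets. If you intend to give a self-contained proof, you should replace that step accordingly.
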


This result is compatible with the non-tempered local GGP conjecture \cite{GGP2}. Recall that the L-parameter $M_\tau$ of $\tau$ is conjugate orthogonal. The A-parameter 
of $\pi_\tau$ is the representation $M_\tau \otimes [2]$ of $WD_\RE\times\SL_2(\BC)$, where $[k]$, $k\geq 1$, denotes the irreducible algebraic $k$-dimensional representation of the Arthur $\SL_2(\BC)$. Note that the L-parameter associated to $M_\tau \otimes [2]$  is $M_\tau \nu^{1/2} \oplus M_\tau \nu^{-1/2}$, where  $\nu =|\det|_\RE$.   The non-tempered GGP conjecture  \cite[Conjecture 6.1]{GGP2} predicts that for any irreducible quotient $\sigma$ of $\CD_{l_0}(\pi_\tau)$, the A-parameter  $N_\sigma$ of $\sigma$ and $M_\tau (\mu^{-1})\otimes [2]$ form a {\it relevant pair}. 

By \Cref{ST-thm}, we have a well defined map  
\begin{equation}\label{eq:descent}
\begin{array}{rcl}
   \DD\colon \Set{\tau\in \Irr(\GL_m(E)) | \begin{array}{l} \text{$\tau$ is supercuspidal and} \\
   \text{$L(\sfs, \tau,\Asai)$ has a pole at $\sfs=0$}
   \end{array}} & \xrightarrow{\ \ \ } & \mathrm{Rep}(\RU_{2\lfloor\frac{m+1}{2} \rfloor}(\RF)),\\
  \tau & \mapsto & \CD_{\lfloor{\frac{m}{2}}\rfloor,  \psi_\RF,\mu}(\pi_\tau), 
  \end{array}
\end{equation}
and the image of $\DD$ consists of certain semisimple representations of 
$\RU_{2\lfloor\frac{m+1}{2} \rfloor}(\RF)$. 
%By \Cref{ST-thm}, $\CD_{l_0}(\pi_\tau)$ is semisimple hence its irreducible quotients $\sigma$ are direct summands.
By the definition of the relevant pair of A-parameters \cite[\S3]{GGP2}, the following are clear:
\begin{itemize}
\item If $m=2n-1$ is odd, then $N_\sigma$ is of the form $M_\tau (\mu^{-1}) \oplus \zeta$, where $\zeta$ is  a conjugate symplectic character of $\RE^\times$ (i.e. $\zeta|_{\RF^\times} = \omega_{\RE/\RF}$);
\item If $m=2n$ is even, then $N_\sigma = M_\tau (\mu^{-1})$. 
\end{itemize}

\subsection{Distinction criterion: depth zero case}
The general distinction problem  for supercuspidal representations has been studied extensively (see e.g. \cite{HM2, H1, H2, Z}). For the Galois distinction of our concern, a criterion for tame supercuspidal representations is given in \cite{HM1}, and the  general case is given in \cite{S}.  In the depth zero case, we shall follow the result in \cite{CG}, which gives a necessary and sufficient condition for the Galois distinction. It recovers the depth zero case of the criterion in \cite{HM1}.

An irreducible depth zero supercuspidal representation $\tau$ of $\GL_m(\RE)$ is regular in the sense of \cite{Ka}, and is associated to a pair $(T, \chi)$, where $T\cong \RE_m^\times$ is an elliptic torus in $\GL_m(\RE)$ and $\chi$ is a regular depth zero character of $T$.  Here $\RE_m$ denotes the  degree $m$ unramified extension of $\RE$. Then we write 
$\tau = \tau_{(T, \chi)}$.

\begin{thm}[\cite{CG, HM1}] \label{CG-thm}
An irreducible depth zero supercuspidal representation $\tau$ of $\GL_m(\RE)$
is  distinguished if and only if  $\tau = \tau_{(T,\chi)}$ satisfies the conditions in one of the following two cases:
\begin{enumerate}
\item $\RE/\RF$ is unramified, $m=2n-1$ is odd, and $\chi|_{\RF^\times_{2n-1}} =\mathbbm{1} $.
\item $\RE/\RF$ is ramified, $m=2n$ is even, and  $\chi|_{\RF_{2n}^\times} = \omega_{\RE_{2n}/ \RF_{2n}}$. \end{enumerate}
\end{thm}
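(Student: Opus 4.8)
The plan is to prove Theorem~\ref{CG-thm} by reducing the $p$-adic distinction problem to a distinction problem over the residue field, following the Mackey-theoretic method of \cite{HM1} and its depth zero refinement in \cite{CG}. The starting point is the standard model of a depth zero supercuspidal: writing $\tau=\tau_{(T,\chi)}$ with $T\cong\RE_m^\times$, one has $\tau\cong\cInd_{J}^{\GL_m(\RE)}\Lambda$, where $J=\RE^\times\GL_m(\CO_\RE)$ and $\Lambda$ is built from the Deligne--Lusztig cuspidal representation $\lambda_{\bar\chi}$ of $\GL_m(k_\RE)$ (here $k_\RE$ is the residue field of $\RE$ and $\bar\chi$ the regular character of $k_{\RE_m}^\times$ obtained by reducing $\chi$): $\Lambda$ is the inflation of $\lambda_{\bar\chi}$ to $\GL_m(\CO_\RE)$ extended to $J$ so that $\RE^\times$ acts through $\chi|_{\RE^\times}$. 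Since $\tau$ distinguished forces $\tau^\vee\cong\tau^\iota$ and the central character trivial on $\RF^\times$, one may assume these and work with $\Hom_{\GL_m(\RF)}(\tau,\mathbbm{1})$.

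The core step is the geometric decomposition for the pair $\big(\GL_m(\RF),J\big)$ in $\GL_m(\RE)$: only finitely many double cosets contribute in the depth zero range, and Mackey theory together with Frobenius reciprocity gives
\[
\Hom_{\GL_m(\RF)}(\tau,\mathbbm{1})\ \cong\ \bigoplus_{g}\ \Hom_{\,\GL_m(\RF)\cap{}^{g}J}\big({}^{g}\Lambda,\mathbbm{1}\big),
\]
the sum over a set of double coset representatives. I would index the relevant double cosets by the $\iota$-relative position of the lattices $g\CO_\RE^m$ and $\iota(g\CO_\RE^m)$ (a self-dual integer tuple), observe that the depth zero hypothesis leaves only finitely many, and note that for each representative the group $H_g:=\GL_m(\RF)\cap{}^{g}J$ is compact mod centre, its pro-$p$ radical is killed by ${}^{g}\Lambda$, and the reductive quotient $Q_g$ of $H_g$ is a subgroup of $\GL_m(k_\RE)$ that is either contained in a proper parabolic or is one of a short list of ``symmetric'' subgroups. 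Since $\lambda_{\bar\chi}$ is cuspidal it has no nonzero vectors fixed by the unipotent radical of a proper parabolic, so $\Hom_{H_g}({}^{g}\Lambda,\mathbbm{1})\neq0$ forces $Q_g$ to be a genuine symmetric subgroup and $\Hom_{Q_g}(\lambda_{\bar\chi},\mathbbm{1})\neq0$; thus every ``degenerate'' coset contributes $0$. A case analysis then shows: when $\RE/\RF$ is unramified the surviving coset is essentially unique, with $Q_g=\GL_m(k_\RF)\subset\GL_m(k_\RE)$ ($k_\RE/k_\RF$ a quadratic extension of residue fields); when $\RE/\RF$ is ramified one has $k_\RE=k_\RF$ and the surviving $Q_g$ is the ``Galois/unitary-type'' symmetric subgroup attached to the ramification, which distinguishes cuspidal representations of $\GL_m(k_\RF)$ only for $m$ even --- this is exactly what forces the parities $m=2n-1$ (unramified) and $m=2n$ (ramified).

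The final step is to feed in the finite-field distinction criterion for cuspidal representations of general linear groups: $\lambda_{\bar\chi}$ is distinguished by the surviving symmetric subgroup $Q_g$ if and only if $\bar\chi$ satisfies the corresponding restriction condition (trivial restriction in the unramified case; restriction equal to the order-two character in the ramified case), and then to propagate this back through the choice of $\Lambda$. Tracking the value of $\chi$ on a uniformizer --- pinned down by the central-character normalisation and by compatibility with $\iota$ --- upgrades the residual condition on $\bar\chi$ to the global condition on $\chi$, namely $\chi|_{\RF_{2n-1}^\times}=\mathbbm{1}$ in case~(1) and $\chi|_{\RF_{2n}^\times}=\omega_{\RE_{2n}/\RF_{2n}}$ in case~(2); the converse direction is immediate from the same identity, since a nonzero element of $\Hom_{Q_g}(\lambda_{\bar\chi},\mathbbm{1})$ produces a nonzero element of $\Hom_{\GL_m(\RF)}(\tau,\mathbbm{1})$. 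The main obstacle, as in \cite{HM1,CG}, is the double coset bookkeeping: enumerating the $\iota$-stable lattice data, identifying the reductive quotient $Q_g$ and the relevant finite symmetric space in each case, and --- most delicately --- normalising the extension $\Lambda$ and the uniformizer value of $\chi$ so that the clean conditions above emerge rather than twists of them by auxiliary quadratic characters (Weil-index or discriminant factors).
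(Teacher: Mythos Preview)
Your sketch is a reasonable outline of the Mackey--theoretic approach in the cited references, but note that the paper does not supply its own proof of this theorem: it is quoted directly from \cite{CG,HM1} (and then reformulated as Theorem~\ref{thm-dist}, which the paper says ``can be easily deduced from the main theorem of \cite{CG}''). So there is no proof in the paper to compare against; what you have written is essentially a synopsis of the argument in \cite{CG}.
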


\begin{rmk}
Theorem \ref{CG-thm} indicates that the sufficient condition for the Galois distinction given in {\rm \cite[Theorem 1.1]{HM1}} is also necessary in the depth zero case. Note that in both cases {\rm (1)} and {\rm (2)} of Theorem \ref{CG-thm},
$\RE\cap \RF_m =\RF$ and the nontrivial Galois automorphism $\iota_m \in\Gal(\RE_m/\RF_m)$ restricts to $\iota\in \Gal(\RE/\RF):$
\[
\xymatrix{
& \RE_m   \ar@{-}[dl] \ar@{-}[dr]^{\iota_m}  & \\  \RE \ar@{-}[dr]_{\iota} & &  \RF_m \ar@{-}[dl] \\ & \RF & 
}
\]
\end{rmk}

To compute the local descent, we reformulate \Cref{CG-thm} in terms of compact induction. Let $\Fe$ and $\Ff$ be the residue fields of $\RE$ and $\RF$ respectively, so that $\Fe= \Ff_2$ or $\Ff$ according to whether $\RE/\RF$ is unramified or not. %For a $p$-adic local field $\RF'$, denote by $\CO_{\RF'}$ the ring of integers of $\RF'$.  
Let $M:= \GL_m(\RE)$ and denote by $\CB(M)$ the building of $M$. For a hyperspecial point $x\in \CB(M)$, denote by $[x]$ the image of $x$ in the reduced building
 of $M$.  Let $M_x$ and $M_{[x]}$ be the stabilizer of $x$ and $[x]$ respectively. Then 
\[
M_{[x]}= Z  M_x = \braket{\varpi_E}\times M_x,
\]
where $Z\cong \RE^\times$ is the center of $M$ and $\varpi_E$ is a fixed uniformizer of $E$. 
The reductive quotient $M_x/M_{x,0+} $ of $M_x \cong \GL_m(\CO_\RE)$ is naturally isomorphic to  
\[
{\sf M}^F := {\sf GL}_m(\Fe),
\]
where 
${\sf M} := R_{\Fe/\Ff}{\sf GL}_m$ is defined over $\Ff$, and $\CO_\RE$ is the ring of integers of $\RE$.
See \cite{MP} for the notion of Moy-Prasad filtration.
Let $\sfT$ be an maximally anisotropic maximal tori in $\sfM$ so that $\sfT^F$ and ${\sfT^*}^F$ are naturally identified with  $\Fe_m$.  
Let $\Femo:= \Set{a\in \Fe_m| \Fe(a) = \Fe_m}$ denote the set of regular elements in $\Fe_m$. 
The Galois group $\Gal(\Fe_m/\Fe)$ acts on $\Femo$.   
For $s\in \Femo$ and $c \in \bC^\times$, 
let $\bartau_{s,\cc}$ be the irreducible representation of $M_{[x]}$ such that 
\begin{itemize}
\item
$\bartau_{s,\cc}|_{M_x}$is the inflation of Deligne-Lusztig character $(-1)^{m+1}R_{\sfT,s}^{\sfM}$,
\item and  $\bartau_{s,\cc}(\varpi_E) = \cc$.
\end{itemize}
% \[
% \bartau_{s,\xi}|_{M_x} \text{ is the inflation of Deligne-Lusztig character $(-1)^{m+1}R_{\sfT,s}$ and } 
% \bartau_{s,\xi}(\varpi_E) = \xi.
% \]
The following map is a bijection that gives the well-known classification of irreducible depth zero supercuspidal representation of $M$ (see e.g. \cite{BK}):
\[
\begin{array}{rcl}
\big(\Gal(\Fe_m/\Fe) \backslash \Femo \big)\times \bC^\times & \xrightarrow{\ \ \ \ } & \Set{\pi \in \Irr(\GL_m(E))| \text{$\pi$ is supercuspidual and depth zero}} \\
(\Gal(\Fe_m/\Fe)\cdot s,\cc) &\mapsto &  \tau_{s,\cc}:= \cInd_{M_{[x]}}^M \bartau_{s,\cc}. 
\end{array}
\]
% Here  $\sfT$ is  $\Fe_m$ is identified with a maximal torus $\sfT^*$ in $\sfM^*$, $s\in \Femo \subset \sfM^*$, 
% $\bartau|_{M_x}$ is the inflation of the irreducible cuspidal representation $(-1)^{m+1} R_{T,s}$ of ${\sf M}^F$. 
% %and $\bartau(\varpi_E) = \chi_\tau(\varpi_E)$ 
% Denote by $\chi_\tau: \RE^\times\to\BC^\times$ the central character of $\tau$. 
% Then $\chi_\tau$ is of depth zero and 
% $\chi_\tau|_{\CO_E^\times}$  agrees with the central character of $\bar{\tau}|_{M_x}$. 
% Thus $\tau$ is determined by $\bar{\tau}|_{M_x}$ and $\chi_\tau(\varpi_\RE)$, with $\varpi_\RE$ a uniformizer  of $\RE$. 
% 
%Let $\Femoo:= \Set{a\in \Fe_m^1| \Fe(a) = \Fe_m}$ denote the set of regular elements in $\Fe_m^1$.
% Let $\SCEFm$ 
% be the set of irreducible depth zero supercuspidal representations of $M=\GL_m(\RE)$ distinguished by 
% $\GL_m(\RF)$ and 
Let 
\[
\Femoo:=\Femo\cap \Fe_m^1. 
\]

The following result is equivalent to \Cref{CG-thm} and can be easily deduced from the main theorem 
of \cite{CG}.

\begin{thm} \label{thm-dist}
Suppose there is an irreducible depth zero supercuspidal representation  of $M=\GL_m(\RE)$ distinguished by 
$\GL_m(\RF)$. Then  $\RE/\RF$ is unramified if $m$ is odd, and $\RE/\RF$ is ramified if $m$ is even. 
Moreover, there is a bijection
\[
    \begin{array}{rcl}
    \Gal(\Fe_m/\Fe) \backslash\, \Femoo & \xrightarrow{\ \ \ } & %\SCEFm \\
    \Set{\pi \in \Irr(\GL_m(\RE)) | \begin{array}{l}\text{$\pi$ is supercuspidal, depth zero} \\
    \text{and distinguished by $\GL_m(\RF)$}
    \end{array}}, \\
    s& \mapsto & \tau_s := \tau_{s,\cc_s},
    \end{array}
\]
where 
\[
\cc_s := \begin{cases} 1  & \text{if $\RE/\RF$ is unramified ,}\\
-\vartheta'_n(s) & \text{if $\RE/\RF$ is ramified.}
\end{cases}
\]
%and  $\varpi_\RF$ is a uniformizer of $\RF$.
\end{thm}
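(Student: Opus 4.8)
The plan is to reformulate \Cref{CG-thm} in terms of the explicit parametrization of depth zero supercuspidals recalled above. Every irreducible depth zero supercuspidal representation of $M=\GL_m(\RE)$ is of the form $\tau_{s,\cc}=\tau_{(T,\chi)}$ with $T\cong\RE_m^\times$, where the regular depth zero character $\chi$ is pinned down by two pieces of data: its restriction to $\CO_{\RE_m}^\times$, which is the inflation of the character of $\Fe_m^\times$ dual to $s\in\Femo$ under Deligne--Lusztig duality (so that $\chi$ is regular iff $s\in\Femo$, and the $\Gal(\Fe_m/\Fe)$-orbit of $\chi$ corresponds to that of $s$), and the value $\chi(\varpi_\RE)=\cc$. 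It therefore suffices to determine, using \Cref{CG-thm}, exactly which pairs $(s,\cc)$ give a distinguished $\tau_{(T,\chi)}$.

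First, the dichotomy in the statement is immediate from \Cref{CG-thm}: a distinguished depth zero supercuspidal forces $m=2n-1$ odd when $\RE/\RF$ is unramified and $m=2n$ even when $\RE/\RF$ is ramified. Next I would translate the condition on $\chi\vert_{\RF_m^\times}$. Since $\chi$ is depth zero and $\RF_m^\times=\varpi_\RF^{\BZ}\cdot\CO_{\RF_m}^\times$, this restriction is determined by the character of the residue field of $\RF_m$ induced by $\chi\vert_{\CO_{\RF_m}^\times}$ and by the value $\chi(\varpi_\RF)$. For the residue-field part, the condition of \Cref{CG-thm} translates, via the duality of cyclic groups and after reducing $\omega_{\RE_m/\RF_m}$ modulo $1+\mathfrak{p}$ in the ramified case, into the statement that the order of $s$ divides $\sqrt{|\Fe_m|}+1$, that is $s\in\Fe_m^1$; together with $s\in\Femo$ this gives $s\in\Femoo$. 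For the uniformizer part, in the unramified case $\varpi_\RF=\varpi_\RE$ and the condition reads $\cc=1=\cc_s$, while in the ramified case I would evaluate $\omega_{\RE_m/\RF_m}$ on a uniformizer of $\RF_m$, using local class field theory for the ramified quadratic extension $\RE_m/\RF_m$ and tracking the identification of $\Fe_m^1$ with $\Ff_{2n}^1$, obtaining $\cc=-\vartheta'_n(s)=\cc_s$.

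Granting this, the distinguished depth zero supercuspidals of $\GL_m(\RE)$ are precisely the $\tau_{s,\cc_s}$, $s\in\Femoo$. The assignment $s\mapsto\tau_{s,\cc_s}$ descends to $\Gal(\Fe_m/\Fe)$-orbits, since $\cc_s$ depends only on the orbit of $s$ ($\vartheta'_n$ being $\Gal(\Fe_m/\Fe)$-invariant on $\Fe_m^1$); it is injective because $(s,\cc)\mapsto\tau_{s,\cc}$ is already a bijection from $(\Gal(\Fe_m/\Fe)\backslash\Femo)\times\BC^\times$ onto all depth zero supercuspidals; and surjectivity onto the distinguished ones is exactly the translation of \Cref{CG-thm} carried out above. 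I expect the main obstacle to be the last step of the preceding paragraph: pinning down $\cc_s$ in the ramified case requires the explicit evaluation of the quadratic character $\omega_{\RE_m/\RF_m}$ on a uniformizer together with the bookkeeping converting the residue-level quadratic character of $\RF_m$ into the factor $\vartheta'_n(s)$, whereas the residue-field computation and the formal bijection statement are routine.
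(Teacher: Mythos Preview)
Your proposal is correct and follows the same route as the paper, which does not give an independent proof but simply asserts that the statement ``is equivalent to \Cref{CG-thm} and can be easily deduced from the main theorem of \cite{CG}.'' Your plan---identify $\tau_{s,\cc}$ with $\tau_{(T,\chi)}$ via $\bar\chi\leftrightarrow s$ and $\chi(\varpi_\RE)=\cc$, then translate the restriction condition on $\chi|_{\RF_m^\times}$ into the pair of constraints $s\in\Femoo$ and $\cc=\cc_s$---is exactly this deduction, and you correctly flag that the delicate step is the ramified computation of $\cc_s=-\vartheta'_n(s)$.

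One caution: your sentence ``reducing $\omega_{\RE_m/\RF_m}$ modulo $1+\mathfrak{p}$ in the ramified case'' and concluding $s\in\Fe_m^1$ glosses over a genuine subtlety. In the ramified case $\RE_m/\RF_m$ is totally ramified, so $\CO_{\RF_m}^\times$ and $\CO_{\RE_m}^\times$ have the \emph{same} residue field $\Ff_{2n}$, and the na\"ive reading of the condition $\chi|_{\CO_{\RF_m}^\times}=\omega_{\RE_m/\RF_m}|_{\CO_{\RF_m}^\times}$ would force $\bar\chi$ itself to be the quadratic character of $\Ff_{2n}^\times$ (hence $s=-1$, not regular). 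The correct translation---yielding $s^{q^n+1}=1$---comes from going back to the actual criterion in \cite{CG} rather than the shorthand in \Cref{CG-thm}(2); this is consistent with the paper's phrasing that one should deduce the result ``from the main theorem of \cite{CG}'' and with the self-duality characterization $s\in\Ff_{2n}^1$ recalled in Section~\ref{ssec6.2}. So when you carry this out, work directly from \cite{CG} (or \cite{HM1}) in the ramified case rather than from the summary statement.
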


\delete{
\begin{thm} \label{thm-dist}
An irreducible depth zero supercuspidal representation $\tau$ of $M=\GL_m(\RE)$ is distinguished if and only if $\tau = \textrm{c-}\Ind^M_{M_{[x]}} \bar{\tau}$ with $x$ a hyperspecial point of $\CB(M)$ and $\bar\tau|_{M_x}$ is inflated from an irreducible cuspidal representation $\bar\tau'$ of ${\sf M}^F$ such that  one of the following hold:
\begin{enumerate}
\item  $\RE/\RF$ is unramified, $m=2n-1$ is odd, ${\sf M}^F= {\sf GL}_{m}(\Ff_2)$,  $\bar{\tau}'$ is conjugate self-dual as in Section \ref{ssec-UG}:
\[
\bar\tau' = R^{{\sf M}}_{{\sf T}, s},\quad {\sf T}^F\cong \Ff_{2m}^\times, \quad s\in \Ff_{2m}^1\textrm{ regular in }{\sf M}^*,
\]
and $\chi_\tau(\varpi_\RF)=1$, where $\varpi_\RF$ is a uniformizer of $\RF$.

\item $\RE/\RF$ is ramified, $m=2n$ is even,  ${\sf M}^F= {\sf GL}_{m}(\Ff)$,  $\bar\tau'$ is self-dual as in Section \ref{ssec6.2}:
\[
\bar\tau' = -R^{{\sf M}}_{{\sf T}, s},\quad {\sf T}^F\cong \Ff_{m}^\times, \quad s\in \Ff_{m}^1\textrm{ regular in }{\sf M}^*,
\]
and $\chi_\tau (\varpi_\RE) = -\vartheta'_n(s)$, where $\varpi_\RE $ is a uniformizer of $\RE$ such that $\iota(\varpi_\RE)=-\varpi_\RE$ and $\vartheta'_n$ is the unique nontrivial quadratic character of $\Ff_{m}^1$ (see \Cref{ssec2.3}).
\end{enumerate}
\end{thm}

}

\section{Models and splittings of Weil representations} \label{sec-MSWR}

 Let $\Fp_\RE$ and $\Fp_\RF$ be the maximal ideals of $\CO_\RE$ and $\CO_\RF$ respectively. Assume that 
the additive character $\psi_\RF$ of $\RF$ has conductor $\Fp_\RF$, so that $\psi_\RF|_{\CO_\RF}$ is inflated from a nontrivial additive character $\psi_\Ff$ of $\Ff$. 
Let $W =\RE^{2n}$ be  endowed with a skew-Hermitian form $\langle, \rangle$ represented by the matrix 
\[
J_{2n} = \begin{pmatrix} & w_n \\ -w_n & \end{pmatrix},\quad{\rm with}\quad w_n = \begin{pmatrix} & & & 1 \\ & & 1 & \\
& \cdots & & \\
1 & & & \end{pmatrix}_{n\times n}.
\]
Then we have the realization 
\[
\RU_{2n}(\RF) =\RU(W) =  \Set{g\in \GL_{2n}(\RE)  | gJ_{2n}  \iota(g{}^t) = J_{2n}},
\]
where $g^t$ denotes the transpose of $g$. 
Let  $L = \CO_\RE^{2n}$, which is a self-dual lattice of $V$ in the sense of \eqref{eq:sdlattice}.
Put $L^\sharp= \Fp_\RE L$. Then $L/L^\sharp = \Fe^{2n}$ is a skew-Hermitian or symplectic space according to $\Fe=\Ff_2$ or $\Ff$, whose isometry group
\begin{equation} \label{fH}
{\sf H} = {\sf U}_{2n}  \quad {\rm or } \quad {\sf Sp}_{2n}
\end{equation}
is defined over $\Ff$. The stabilizer of $L$ in $\RU(W)$,
\[
\RU(W)_L := \{g\in \RU(W) : gL=L\},
\]
is a maximal compact subgroup of $\RU(W)$ with finite reductive quotient ${\sf H}^F$.

Let $\BH_W := W\times \RF$ be the Heisenberg group associated to $V$. By the Stone-von Neumann Theorem, up to isomorphism, there is a unique irreducible smooth representation 
  $\rho_{\psi_\RF}$ of $\BH_W$ with central character $\psi_\RF$. We consider two realizations  of $\rho_{\psi_\RF}$. 
  
  The {\it Schr\"odinger model}
of $\rho_{\psi_\RF}$ is realized on the space $\CS(Y)$ of locally constant, compactly supported functions on a Lagrangian subspace $Y$ of $V$. Following \cite{RR}, there is a projective representation 
\[
\RU(W)\to \GL(\CS(Y)),\quad g\mapsto M^Y_g
\]
of $\RU(W)$, with Ranga-Rao cocycle $c_Y(g, g')$. Given a character $\mu$ of $\RE^\times$ satisfying that $\mu|_{\RF^\times} = \omega_{\RE/\RF}$, a splitting
\[
\beta^Y_\mu: \RU(W)\to \BC^\times
\]
of $c_Y(g, g')$ is constructed in \cite{Ku}. This gives the Weil representation $\omega_{\psi_\RF, \mu} := \omega_{2n, \psi_{\RF}, \mu}$ of $\RU(W)$  on $\CS(Y)$, so that $g\in \RU(W)$ acts by $ \beta^Y_\mu(g) M^Y_g $.

Let $(\rho_{\psi_\Ff}, S)$ be the unique irreducible representation of $\BH_{L/L^\sharp}:=L/L^\sharp \times \Ff$ with central character $\psi_\Ff$, inflated to  a representation 
$\bar{\rho}_{\psi_\RF}$ of $\BH_L := L \times \CO_\RF$. The {\it generalized lattice model} of $\rho_{\psi_\RF}$ is realized on the space $\CS(L)$ of locally constant, compactly supported maps
$f: W\to S$ satisfying that
\begin{equation} \label{rho}
f(v+x ) = \psi_\RF\big(\frac{1}{2}{\rm tr}_{\RE/\RF}( \langle x, v\rangle)\big) \bar{\rho}_{\psi_\RF}(v). f(x),\quad v\in L, \quad x\in W.
\end{equation}
By \cite[Theorem 3.4]{P1}, there is a projective representation 
\[
\RU(W)\to \GL(\CS(L)),\quad g\mapsto M^L_g,
\]
with a unique splitting 
$
\beta^L: \RU(W)\to \BC^\times
$
such that $\beta^L|_{\RU(W)_L}\equiv 1$. Then  $g\in \RU(W)$ acts by $\beta^L(g)M^L_g$  on $\CS(L)$ as a genuine representation.  Recall that 
$S$ supplies the Weil representation $\omega_{\psi_\Ff}$ of ${\sf H}^F$, and we inflate $\omega_{\psi_\Ff}$ to a representation $\bar{\omega}_{\psi_\RF}$ of $\RU(W)_L$. Following \cite{P1}, 
\begin{equation} \label{ML}
(M^L_k. f)(x) = \bar{\omega}_{\psi_\RF}(k). f(k^{-1}x)
\end{equation} 
for $k\in \RU(W)_L$, $f\in \CS(L)$ and $x\in W$.

Let $\Psi: \CS(L) \to \CS(Y)$ be an isomorphism as irreducible representations of $\BH_V$, which is unique up to scalar.  Define the function 
\[
\alpha: \RU(W)\to \BC^\times
\]
such that $\alpha(g)\cdot \Psi\circ M^L_g = M^Y_g\circ \Psi$, $g\in \RU(W)$, and define $\xi_\mu$ to be the ratio of the splitting $\alpha \beta^Y_\mu$ to Pan's splitting $\beta^L$:
\begin{equation}\label{eq:xi.mu}
\xi_\mu := \alpha \beta^Y_\mu (\beta^L)^{-1}: \RU(W)\to \BC^\times.
\end{equation}
Thus we have a commutative diagram 
\[
\xymatrix{
\CS(L) \ar[r]^\Psi \ar[d]_{\xi_\mu(g)\beta^L(g)M^L_g} &  \CS(Y) \ar[d]^{\beta^Y_\mu(g)M^Y_g} \\
\CS(L) \ar[r]^\Psi & \CS(Y)
}
\]
for all $g\in \RU(W)$. By \cite{P1, O}, $\xi_\mu |_{\RU(W)_L}$ is a character that factors through the determinant. That is, 
\[
\xi_\mu|_{\RU(W)_L} = \xi_\mu'\circ \det: \RU(W)_L \to \BC^\times
\]
for a character $\xi_\mu'$ of $\RE_L:=\det(\RU(W)_L)$. We summary above discussions as follows.

\begin{prop} \label{gen-latt}
The Weil representation $\omega_{\psi_\RF, \mu}$ of $\RU(W)$ can be realized on the generalized lattice model $\CS(L)$ such that
\[
\omega_{\psi_{\RF}, \mu}(g) = \xi_\mu(g) \beta^L(g) M^L_g,\quad g\in \RU(W).
\]
Moreover, 
\[
\omega_{\psi_{\RF}, \mu}(k) = \xi_\mu'(\det k) \cdot M^L_k,\quad k\in \RU(W)_L.
\]
\end{prop}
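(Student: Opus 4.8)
The plan is to assemble the ingredients recorded in the paragraphs preceding the statement; the proposition is essentially a bookkeeping device collecting how the generalized lattice model carries the Weil representation $\omega_{\psi_\RF,\mu}$ attached to the splitting data $(\psi_\RF,\mu)$, so almost all of the real content is imported from \cite{RR,Ku,P1,O}.

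First I would start from the Schr\"odinger realization: by construction $\omega_{\psi_\RF,\mu}$ acts on $\CS(Y)$ with $g\in\RU(W)$ acting by $\beta^Y_\mu(g)\,M^Y_g$, where $\beta^Y_\mu$ is Kudla's splitting of the Ranga-Rao cocycle $c_Y$. Then I transport this action along the intertwining isomorphism $\Psi\colon\CS(L)\to\CS(Y)$ of irreducible $\BH_W$-modules. Since $\Psi$ is unique up to a scalar, conjugation by $\Psi$ gives a well-defined $\RU(W)$-action on $\CS(L)$, independent of that scalar. From the defining relation $\alpha(g)\,\Psi\circ M^L_g = M^Y_g\circ\Psi$ one obtains $\Psi^{-1}\circ M^Y_g\circ\Psi = \alpha(g)\,M^L_g$, so $g$ acts on $\CS(L)$ by $\beta^Y_\mu(g)\,\alpha(g)\,M^L_g$. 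By the definition \eqref{eq:xi.mu} of $\xi_\mu$ this equals $\xi_\mu(g)\,\beta^L(g)\,M^L_g$, which is the first asserted formula; that the right-hand side is a genuine (not merely projective) representation is precisely the content of Pan's splitting result \cite[Theorem 3.4]{P1}.

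Finally I would restrict to $k\in\RU(W)_L$. By \cite{P1,O} the restriction of $\xi_\mu$ to $\RU(W)_L$ is a character factoring through the determinant, say $\xi_\mu|_{\RU(W)_L}=\xi'_\mu\circ\det$, while Pan's normalization forces $\beta^L|_{\RU(W)_L}\equiv1$. Substituting into the first formula gives $\omega_{\psi_\RF,\mu}(k)=\xi'_\mu(\det k)\,M^L_k$; combined with \eqref{ML} this exhibits $\omega_{\psi_\RF,\mu}(k)$ as $f\mapsto\bigl(x\mapsto\xi'_\mu(\det k)\,\bar\omega_{\psi_\RF}(k).f(k^{-1}x)\bigr)$, i.e.\ the inflation to $\RU(W)_L$ of the finite Weil representation $\omega_{\psi_\Ff}$ of $\sfH^F$, twisted by $\xi'_\mu\circ\det$. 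The only step requiring any care --- the ``main obstacle'' such as it is --- is making sure the two a priori different trivializations $\alpha\beta^Y_\mu$ and $\beta^L$ of the same projective representation $g\mapsto M^L_g$ of $\RU(W)$ genuinely differ by the function $\xi_\mu$ and that $\xi_\mu$ behaves as claimed on $\RU(W)_L$; both facts are exactly what \cite{Ku,P1,O} supply, so modulo those references the argument is a direct unwinding of the commutative diagram preceding \eqref{eq:xi.mu} together with \eqref{ML}.
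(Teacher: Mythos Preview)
Your proposal is correct and follows exactly the approach of the paper: the proposition is stated as a summary of the preceding discussion (``We summarize above discussions as follows''), and your argument unwinds precisely that discussion---transporting the Schr\"odinger action along $\Psi$, rewriting $\beta^Y_\mu\alpha$ as $\xi_\mu\beta^L$ via the definition of $\xi_\mu$, and then using Pan's normalization $\beta^L|_{\RU(W)_L}\equiv 1$ together with the factorization $\xi_\mu|_{\RU(W)_L}=\xi'_\mu\circ\det$ from \cite{P1,O}. There is no separate proof in the paper beyond this, so nothing further is required.
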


To ease the notation, we also write $(\omega_{\psi_\RF, \mu}, \CS(L))$ and $(\bar{\omega}_{\psi_\RF}, S)$ as representations of $\RU(W)\ltimes \BH_V$ and $\RU(W)_L\ltimes \BH_L$ respectively. Recall that 
$\BH_V$ and $\BH_L$ act by $\rho_{\psi_\RF}$ and $\bar\rho_{\psi_\RF}$ respectively. 

\begin{cor} \label{weil-type}
There is a nonzero map 
\[
\phi\in \Hom_{\RU(W)_L\ltimes \BH_L}(\omega_{\psi_\RF, \mu}, \bar{\omega}_{\psi_\RF}\otimes \xi_\mu),\quad \phi(f) = f(0), \quad f\in\CS(L).
\]
\end{cor}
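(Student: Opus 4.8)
The plan is to verify directly that the evaluation map $\phi\colon\CS(L)\to S$, $\phi(f)=f(0)$, is nonzero and intertwines the actions of $\RU(W)_L\ltimes\BH_L$, where on the target $\BH_L$ acts through $\bar\rho_{\psi_\RF}$, the group $\RU(W)_L$ acts through $\bar\omega_{\psi_\RF}$, and the twist by $\xi_\mu$ is applied only to the $\RU(W)_L$-factor, on which it equals $\xi_\mu'\circ\det$ by Proposition~\ref{gen-latt}. That $\phi$ lands in $S$ is immediate, since an element of the generalized lattice model is by definition a function $W\to S$. For the nonvanishing I would first observe that setting $x=0$ in the covariance relation~\eqref{rho}, and using $\langle 0,v\rangle=0$, yields $f(v)=\bar\rho_{\psi_\RF}(v).f(0)$ for all $v\in L$; conversely, for any $s\in S$ the prescription $v\mapsto\bar\rho_{\psi_\RF}(v).s$ on the coset $L$ (and $0$ elsewhere) defines an element $f_s\in\CS(L)$ with $\phi(f_s)=s$ — the one thing to check being that this is consistent with~\eqref{rho} restricted to $L$, which is exactly the statement that $\bar\rho_{\psi_\RF}$ is a representation of $\BH_L$. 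Hence $\phi$ is surjective, in particular nonzero.

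For the $\RU(W)_L$-equivariance I would invoke the second displayed identity in Proposition~\ref{gen-latt}, namely $\omega_{\psi_\RF,\mu}(k)=\xi_\mu'(\det k)\,M^L_k$ for $k\in\RU(W)_L$, together with~\eqref{ML}, which gives $(M^L_k.f)(x)=\bar\omega_{\psi_\RF}(k).f(k^{-1}x)$. Since $k$ is $\RE$-linear it fixes the origin, so
\[
\phi\big(\omega_{\psi_\RF,\mu}(k).f\big)=\xi_\mu'(\det k)\,\bar\omega_{\psi_\RF}(k).f(k^{-1}\cdot 0)=\xi_\mu'(\det k)\,\bar\omega_{\psi_\RF}(k).\phi(f),
\]
which is precisely the asserted equivariance.

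For the $\BH_L$-equivariance I would use the explicit formula for the action of $\BH_W$ on the generalized lattice model from~\cite{P1}: for $(w,t)\in\BH_W$ the operator $\rho_{\psi_\RF}(w,t)$ sends $f$ to $f(\,\cdot+w)$ multiplied by a function of $x$ of the form $\psi_\RF\!\big(t+\tfrac12\tr_{\RE/\RF}\langle x,w\rangle\big)$, which is $\psi_\RF(t)$ at $x=0$; thus $\phi(\rho_{\psi_\RF}(w,t).f)=\psi_\RF(t)\,f(w)$. When $(w,t)\in\BH_L$, so $w\in L$, the identity $f(w)=\bar\rho_{\psi_\RF}(w).f(0)$ extracted above from~\eqref{rho} gives $\phi(\rho_{\psi_\RF}(w,t).f)=\psi_\RF(t)\,\bar\rho_{\psi_\RF}(w).\phi(f)=\bar\rho_{\psi_\RF}(w,t).\phi(f)$. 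Combining the two equivariances shows $\phi\in\Hom_{\RU(W)_L\ltimes\BH_L}(\omega_{\psi_\RF,\mu},\bar\omega_{\psi_\RF}\otimes\xi_\mu)$ and $\phi\neq0$.

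I do not expect a substantive obstacle: granting Proposition~\ref{gen-latt}, each equivariance check is a one-line evaluation at the origin. The only point requiring care is the bookkeeping of normalizations — aligning the half-trace factor and the Heisenberg $2$-cocycle implicit in~\eqref{rho} with the precise formulas for the $\BH_W$-action and for $M^L_g$ taken from~\cite{P1, O}, so that the identities ``$(\rho_{\psi_\RF}(w,t).f)(0)=\psi_\RF(t)f(w)$'' and ``$(M^L_k.f)(0)=\bar\omega_{\psi_\RF}(k).f(0)$'' hold exactly, not merely up to a scalar.
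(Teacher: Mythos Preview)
Your proposal is correct and follows essentially the same approach as the paper: the paper's proof is a one-line instruction to verify the claim directly using Proposition~\ref{gen-latt}, the covariance relation~\eqref{rho}, and the formula~\eqref{ML} for $M^L_k$, and you have carried out precisely those verifications. Your argument for nonvanishing (constructing $f_s$ supported on $L$) and your remark about needing the explicit Heisenberg action from \cite{P1} are reasonable additions that the paper leaves implicit.
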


\begin{proof}
This can be verified directly using Proposition \ref{gen-latt}, \eqref{rho} and  the action  \eqref{ML} of $M^L_k$, $k\in\RU(W)_L$.
\end{proof}

Put $\RE_L :=\det(\RU(W)_L)$. Then we have that 
\[
\RE_L = \RE^1:=\Set{t\in \RE^\times | N_{\RE/\RF}(t)=1}
\]
if $\RE/\RF$ is unramified, and 
\[
\RE_L = \RE^+ : = \RE^1\cap (1+\Fp_\RE)
\]
is an index 2 subgroup of $\RE^1$ otherwise. The restriction $\xi_\mu'|_{\RE^+}$ is determined in \cite{O}, based on the calculations in \cite{P1}. In particular, $\xi_\mu' : \RE_L\to\BC^\times$ is determined if $\RE/\RF$ is ramified. 

Let us sketch the  description of $\xi_\mu'|_{\RE^+}$. By Hilbert's 90, there is a commutative diagram 
\[
\xymatrix{
(1+\Fp_\RE)/(1+\Fp_\RF) \ar@{^(->}[r] \ar[d]^\cong & \CO_\RE^\times/\CO_\RF^\times \ar@{^(->}[r] \ar[d]^\cong &  \RE^\times /\RF^\times \ar[d]^\cong \\
\RE^+ \ar@{^(->}[r] & \RE_L \ar@{^(->}[r] & \RE^1 
}
\]
where the vertical isomorphisms are induced by the map $k\mapsto k/\iota(k)$. 
Since $\mu$ is trivial on $N_{\RE/\RF}(\RE^\times)$,  $\mu|_{1+\Fp_\RE}$ factors through the quotient $(1+\Fp_\RE)/(1+\Fp_\RF)$ and can be regarded as a character of $\RE^+$. Denote this character by $\mu^+$.
By \cite[Proposition 3.1]{O},
\begin{equation}
\xi_\mu' |_{\RE^+} = \mu^+.
\end{equation}

\section{Minimal $K$-types of induced representations} \label{KTIR}

Let $V=\RE^{2m}$ be endowed with a skew-Hermitian form represented by $J_{2m}$. Denote by \[
\{e_1,\ldots, e_m, f_m,\ldots, f_1\}
\]
the standard basis 
of $V$ so that 
\[
\langle e_i, e_j\rangle = \langle f_i, f_j\rangle=0,\quad \langle e_i, f_j\rangle = \delta_{ij}.
\] 
As before we assume that $m \geq 2$, and put $n=\lfloor\frac{m+1}{2}\rfloor$. Let
\[
V= X+ W + X^\vee
\]
be a polarization, where $X= {\rm Span}_{\RE}\{e_1,\ldots, e_{l_0}\}$ and $X^\vee={\rm Span}_{\RE}\{f_1,\ldots, f_{l_0}\}$ are isotropic subspaces of dimension $l_0 = m-n$ and are dual to each other.  Set
\begin{align*}
& G=\RU(V) = \RU_{2m}(\RF), \\
& H=\RU(W) = \RU_{2n}(\RF), \\
& M=\GL(Y) = \GL_m(\RE),
\end{align*}
where $Y={\rm Span}_{\RE}\{e_1,\ldots, e_m\}$. 
Following \cite{BS}, we identify a point in the building of a classical group with a lattice function. 
Assume that $x\in \CB(M)\subset \CB(G)$ is the hyperspecial point which corresponds to the self-dual lattice 
\[
V_{x, 0} = \CO_\RE^{2m},
\]
so that
\[
V_{x, 0}\cap W = \CO_\RE^{2n} = L
\]
is the lattice of $W$ as in Section \ref{sec-MSWR}. Let $y\in \CB(H)$ be the  hyperspecial point corresponding to $L$. Then 
\[
W_{y, 0}=L,\quad H_y = \RU(W)_L, \quad H_y / H_{y, 0+} = {\sf H}^F = {\sf U}_{2n}(\Ff)  \  \textrm{or}  \ {\sf Sp}_{2n}(\Ff)
\]
as in \eqref{fH}. 
Recall that $P$ is the Siegel parabolic subgroup of $G$ with Levi subgroup $M$. 
We also have the quotient
\[
 {\sf G}^F = G_x/G_{x, 0+} = {\sf U}_{2m}(\Ff) \quad \textrm{or} \quad {\sf Sp}_{2m}(\Ff)
\]
according to whether $\RE/\RF$ is unramified or not, which has the Siegel parabolic subgroup ${\sf P}^F = P_x/ P_{x, 0+}$ and Levi subgroup 
\[
{\sf M}^F= M_x/ M_{x, 0+} = \GL_m(\Fe).
\]
Let $\tau = \textrm{c-}\Ind^M_{M_{[x]}}\bar\tau$ be an irreducible distinguished depth zero supercuspidal representation of $M$, where $\bar\tau = \bar\tau_{s, c_s}$ as given by Theorem \ref{thm-dist}. Then 
$\bar\tau|_{M_x}$ is inflated from an
 irreducible (conjugate) self-dual cuspidal representation 
 \[
 \bar\tau' :=(-1)^{m+1}R^{\sfM}_{\sfT, s}
 \]
 of ${\sf M}^F$. We have
 \[
 \Ind^{{\sf G}}_{{\sf P}} (\bar\tau') =  \pi_{\reg} \oplus \pi_{\rm ss}
 \]
 as in \eqref{Dec} and \eqref{Dec-SP}.  Recall the parabolic induction $\rho_{\tau, {\sfs}} = \Ind^G_P (\tau_{\sfs})$, ${\sfs}\in\BC$, given by \eqref{PI}, and the short exact sequence
 \eqref{SES} at ${\sfs}=1$. For convenience, write
 \[
 {\sfs}' = {\sfs}+m/2. 
 \]
 
By abuse of notation, in the rest of the paper we often identify a representation of a finite quotient with its inflation.  The main result of this section is the following.
 
 \begin{prop} \label{K-type}
 With above definitions, we have that
 \begin{enumerate}
 \item  $\rho_{\tau, {\sfs}}^{G_{x, 0+}} \cong \pi_\reg \oplus \pi_{\rm ss}$,  ${\sfs}\in\BC$,
 \item
$\rho_{\tau}^{G_{x,0+}} \cong \pi_\reg$, $\pi_\tau^{G_{x, 0+}} \cong \pi_{\rm ss}$.
 \end{enumerate}
 \end{prop}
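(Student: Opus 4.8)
The plan is to reduce both parts to a ``depth zero'' comparison at the hyperspecial vertex $x$, identifying the $G_{x,0+}$-fixed vectors in the degenerate principal series $\rho_{\tau,\sfs}=\Ind_P^G\tau_\sfs$ with a parabolically induced representation of the finite group ${\sf G}^F$. For part (1) I would first use the Iwasawa decomposition $G=PG_x$ (valid since $x$ is hyperspecial) to obtain $\rho_{\tau,\sfs}|_{G_x}\cong\Ind_{P_x}^{G_x}\bigl(\tau|_{M_x}\bigr)$, where $P_x=P\cap G_x=M_xN_x$ and $\tau|_{M_x}$ is made a $P_x$-module by inflation through $P_x\twoheadrightarrow M_x$. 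Since $|\det|_\RE$ and the modular character $\delta_P$ are trivial on $M_x=\GL_m(\CO_\RE)$, this restriction to $G_x$ does not involve $\sfs$ at all, which is exactly why the answer in (1) is independent of $\sfs$. I would then pass to $G_{x,0+}$-invariants: $G_{x,0+}$ is a normal pro-$p$ subgroup of $G_x$ with $P_x\cap G_{x,0+}=P_{x,0+}$ and $P_xG_{x,0+}/G_{x,0+}={\sf P}^F$, so a Mackey computation over the double cosets ${\sf P}^F\backslash{\sf G}^F=(P_xG_{x,0+})\backslash G_x$ --- in which normality of $G_{x,0+}$ collapses every twisted isotropy group to $P_{x,0+}$ --- yields $\bigl(\Ind_{P_x}^{G_x}\sigma\bigr)^{G_{x,0+}}\cong\Ind^{{\sf G}}_{{\sf P}}\bigl(\sigma^{P_{x,0+}}\bigr)$ for any smooth $P_x$-module $\sigma$, with ${\sf N}^F$ acting trivially on $\sigma^{P_{x,0+}}=\sigma^{M_{x,0+}}$.

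It then remains, for part (1), to show $\bigl(\tau|_{M_x}\bigr)^{M_{x,0+}}\cong\bar\tau'=(-1)^{m+1}R^{\sfM}_{\sfT,s}$ as an ${\sf M}^F$-module. Writing $\tau=\cInd_{M_{[x]}}^M\bar\tau$ and using Mackey/Frobenius reciprocity, $\Hom_{M_{x,0+}}(\mathbbm{1},\tau)=\bigoplus_{g\in M_{x,0+}\backslash M/M_{[x]}}\Hom_{M_{x,0+}\cap\,{}^{g}M_{[x]}}(\mathbbm{1},{}^{g}\bar\tau)$; the trivial double coset contributes $\bar\tau^{M_{x,0+}}=\bar\tau'$, and every other double coset contributes $0$ because $\bar\tau'$, being cuspidal, has no vector fixed by the unipotent radical of a proper parabolic of ${\sf M}^F$ --- this is the statement that depth zero supercuspidal data are types, which I would cite from the work of Moy--Prasad, Morris and Bushnell--Kutzko. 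Combining this with the previous paragraph and with \eqref{Dec} (resp.\ \eqref{Dec-SP}), which say $\Ind^{{\sf G}}_{{\sf P}}\bar\tau'=\pi_\reg\oplus\pi_{\rm ss}$, gives $\rho_{\tau,\sfs}^{G_{x,0+}}\cong\pi_\reg\oplus\pi_{\rm ss}$. I expect this identification of $\bigl(\tau|_{M_x}\bigr)^{M_{x,0+}}$ --- i.e.\ the book-keeping with the embedding of the building of $M$ into that of $G$ together with the cuspidality vanishing --- to be the main (if standard) technical point of part (1).

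For part (2) I would apply the $G_{x,0+}$-invariants functor, which is exact on smooth representations as $G_{x,0+}$ is pro-$p$ and we work over $\BC$, to the short exact sequence \eqref{SES}, getting a split short exact sequence $0\to\rho_\tau^{G_{x,0+}}\to\pi_\reg\oplus\pi_{\rm ss}\to\pi_\tau^{G_{x,0+}}\to 0$ of ${\sf G}^F$-modules; thus $\rho_\tau^{G_{x,0+}}$ and $\pi_\tau^{G_{x,0+}}$ are complementary submodules of the multiplicity-free module $\pi_\reg\oplus\pi_{\rm ss}$. To identify them I would argue by genericity. Choosing $\psi_\RF$ so that the attached Whittaker datum of $G$ has depth zero and reduces to a generic character $\bar\psi$ of ${\sf G}^F$, the $p$-adic Jacquet (Whittaker) integral on $\rho_{\tau,\sfs}$ restricts, on $G_{x,0+}$-fixed vectors, to the finite Jacquet integral on $\Ind^{{\sf G}}_{{\sf P}}\bar\tau'$ --- the integral over the opposite unipotent radical collapsing to a finite sum over $\overline{{\sf N}}^F$ --- and the latter is nonzero on $\pi_\reg$ and zero on $\pi_{\rm ss}$, since $\pi_\reg$ is the unique generic constituent (as recalled after \eqref{Dec}, \eqref{Dec-SP}). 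Since $\rho_\tau$ is $\psi_\RF$-generic while $\pi_\tau$ is non-generic, the (unique up to scalar) Whittaker functional of $\rho_{\tau,1}$ is nonzero on $\rho_\tau$ and vanishes on $\pi_\tau$; comparing with the finite side this forces $\pi_\reg\subseteq\rho_\tau^{G_{x,0+}}$ and $\pi_\reg\not\subseteq\pi_\tau^{G_{x,0+}}$. As $\pi_\tau^{G_{x,0+}}\neq 0$ --- its non-vanishing being inherited from $\rho_{\tau,1}$ through the standard intertwining operator realizing the Langlands quotient --- we conclude $\rho_\tau^{G_{x,0+}}\cong\pi_\reg$ and $\pi_\tau^{G_{x,0+}}\cong\pi_{\rm ss}$. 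The delicate step here is precisely this compatibility of the $p$-adic and finite Whittaker/Jacquet integrals on depth-zero vectors (together with the bookkeeping that tracks non-genericity), which one either proves directly from the Jacquet-integral formula or quotes from the literature on depth-zero generic representations.
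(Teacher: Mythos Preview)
Your approach to part (1) is correct and in fact more direct than the paper's. The paper introduces an auxiliary point $z\in\CB(M)$ with $G_{x,0+}\subset G_{z,0+}\subset G_z\subset G_x$ and $G_{z,0+}/G_{x,0+}\cong{\sf N}^F$, then uses \cite[Proposition~6.7]{MP} to identify $\rho_{\tau,\sfs}^{G_{z,0+}}\cong\CJ_N(\rho_{\tau,\sfs})^{M_{z,0+}}\cong\bar\tau'\oplus\bar\tau'$; this gives an upper bound on $\rho_{\tau,\sfs}^{G_{x,0+}}$ (any constituent has cuspidal support $\bar\tau'$), which is matched against the obvious lower bound $\Ind^{\sf G}_{\sf P}\bar\tau'\subset\rho_{\tau,\sfs}^{G_{x,0+}}$. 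Your Iwasawa/Mackey argument bypasses the sandwich and computes $\rho_{\tau,\sfs}^{G_{x,0+}}$ in one step, at the cost of invoking the type-theoretic fact $\tau^{M_{x,0+}}\cong\bar\tau'$ as a black box. Both routes are standard; yours is arguably cleaner.

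For part (2) the overall shape is the same as the paper's (Whittaker functionals distinguish $\pi_\reg$ from $\pi_{\rm ss}$), but there are two soft spots. First, your inference ``$\lambda|_{\rho_\tau}\neq 0$ and $\lambda|_{G_{x,0+}\text{-invariants}}$ is supported on $\pi_\reg$, therefore $\pi_\reg\subset\rho_\tau^{G_{x,0+}}$'' does not follow: nothing prevents $\lambda|_{\rho_\tau}$ from being nonzero only on vectors outside $\rho_\tau^{G_{x,0+}}$. The paper avoids this by working at $\sfs=-1$, where $\pi_\tau$ is a \emph{sub} of $\rho_{\tau,-1}$; then $\lambda$ vanishes on $\pi_\tau$ (non-genericity) but is nonzero on $\pi_\reg\subset\rho_{\tau,-1}^{G_{x,0+}}$, which directly gives $\pi_\reg\not\subset\pi_\tau^{G_{x,0+}}$. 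Second, your justification of $\pi_\tau^{G_{x,0+}}\neq 0$ via the intertwining operator is circular: $M(1)$ could a priori annihilate $\rho_{\tau,1}^{G_{x,0+}}$, which is exactly the statement $\rho_\tau^{G_{x,0+}}=\pi_\reg\oplus\pi_{\rm ss}$ you are trying to exclude. The paper handles this independently by computing $\CJ_N(\pi_\tau)^{M_{x,0+}}\cong\tau^{M_{x,0+}}\cong\bar\tau'$ and invoking \cite[Proposition~6.7]{MP} again, which forces $\pi_\tau^{G_{x,0+}}\neq 0$. With these two fixes your argument goes through.
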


\begin{proof} (1)
Let  $N$ be the unipotent radical of $P$, and let ${\sf N}^F=N_x/N_{x,0+}$.
Take $z\in \CB(M)$  such that the following hold:
\begin{itemize}
\item $G_{x,0+}\subset G_{z, 0+}\subset G_z\subset G_x$, 

\item $G_{z,0+}/G_{x,0+}\cong {\sf N}^F \subset {\sf G}^F$,

\item $G_z/G_{z,0+}\cong M_z/M_{z,0+}$.
\end{itemize}
Define the following representation of $M_{[x]}=ZM_x$,
\begin{equation} \label{taubar}
\bar{\tau}_{\sfs}:=\bar{\tau}\otimes |\det|_\RE^{{\sfs}-1/2},\quad {\sfs}\in\BC,
\end{equation}
so that
\[
\tau_{\sfs}=\textrm{c-Ind}^M_{M_{[x]}}\bar{\tau}_{\sfs}.
\]
Denote by $\CJ_N$ the Jacquet functor with respect to $N$. By \cite[Proposition 6.7]{MP}, 
\begin{equation} \label{z+}
\rho_{\tau, {\sfs}}^{G_{z, 0+}} \cong \CJ_N(\rho_{\tau, {\sfs}})^{M_{z, 0+}} \cong \bar\tau' \oplus \bar\tau'.
\end{equation}
On the other hand,
\begin{equation} \label{x+}
\begin{aligned}
\rho_{\tau, {\sfs}}^{G_{x, 0+}} & =\left( \textrm{c-}\Ind^G_{ZM_zN}(\bar\tau_{{\sfs}'})\right)^{G_{x, 0+}}\\
&\supset \left( \textrm{c-}\Ind^{G_x}_{ZM_zN}(\bar\tau_{{\sfs}'})\right)^{G_{x, 0+}} \\
& \cong \Ind^{\sf G}_{\sf P}(\bar\tau' )  \\
& = \pi_\reg\oplus \pi_{\rm ss}.
\end{aligned}
\end{equation}
It is clear that
\[
\pi_\reg^{{\sf N}^F} \cong \pi_{\rm ss}^{{\sf N}^F}\cong\bar\tau' 
\]
as ${\sf M}^F$-modules. By the uniqueness of minimal $K$-types, every ${\sf G}^F$-modules in $\rho_{\tau, {\sfs}}^{G_{x,0+}}$ has cuspidal support $\bar\tau'$.  Thus it follows from \eqref{z+} 
and \eqref{x+} that equality must hold in \eqref{x+}, that is, 
\[
\rho_{\tau, {\sfs}}^{G_{x, 0+}}\cong \pi_\reg \oplus \pi_{\rm ss}. 
\]
(2) We first note that
\[
\CJ_N(\rho_\tau) \cong \CJ_N(\pi_\tau) \cong \tau
\]
and therefore
\[
\CJ_N(\rho_\tau)^{M_{x, 0+}}\cong \CJ_N(\pi_\tau)^{M_{x, 0+}}\cong\bar\tau',
\]
It follows that exactly one of $\rho_\tau^{G_{x, 0+}}$ and $\pi_\tau^{G_{x, 0+}}$ is isomorphic to $\pi_\reg$, and the other is isomorphic to $\pi_{\rm ss}$.

Let $U$ be the unipotent radical of the standard Borel subgroup of $G$, and let ${\sf U}^F = U_x/U_{x, 0+}$. Let $\psi$ be a generic character of $U$ of depth zero.  By Frobenius reciprocity, we have
\begin{align*}
0 &\neq \Hom_G(\rho_{\tau,{\sfs}}, \Ind^G_U\psi) \\
& = \Hom_G(\textrm{c-}\Ind^G_{ZM_zN }\bar\tau_{{\sfs}'}, \Ind^G_U\psi) \\
& \supset \Hom_{M_zN}(\bar\tau_{{\sfs}'}, \Ind^{M_zN}_{M_zN\cap U}\psi) \\
& = \Hom_{M_z} (\bar\tau', \Ind^{M_z}_{M_z\cap U}\psi) \\ 
& \supset \Hom_{M_zN_zG_{z, 0+}}(\bar\tau', \Ind^{G_z}_{G_z\cap U}\psi) \\
& = \Hom_{{\sf U}^F}(\Ind^{\sf G}_{\sf P}(\bar\tau'), \psi|_{{\sf U}^F}).
\end{align*}
By the uniqueness of Whittaker models, all the above spaces are one-dimensional and the inclusions are in fact equalities. Recall that $\pi_\reg$ is generic and $\pi_{\rm ss}$ is not. It follows that 
a nonzero Whittaker functional of $\rho_{\tau, {\sfs}}$ is nonvanishing on $\pi_\reg \subset \rho_{\tau, {\sfs}}^{G_{x, 0+}}$. Following \cite{Ki},  there is an intertwining operator $\rho_{\tau, 1}\to \rho_{\tau, -1}$, with image isomorphic to $\pi_\tau$.  Hence $\pi_\reg\not\subset \pi_\tau^{G_{x, 0+}}$, which implies that
\[
\pi_\reg\cong \rho_\tau^{G_{x,0+}},\quad \pi_{\rm ss} \cong \pi_\tau^{G_{x, 0+}}.
\]
\end{proof}

\section{Explicit depth zero local descent} \label{sec-EDZLD}

We keep the notations from the previous sections. We first recap the irreducible supercuspidal representations $\sigma_{s, a}$ and $\sigma$ of $H= \RU(W) =  \RU_{2n}(\RF)$ given by \eqref{siga} and \eqref{sig} respectively.  Recall the representation $\tau_s$ of $\GL_m(\RE)$ given by Theorem \ref{thm-dist}, and the character $\xi_\mu$ of $H_y = \RU(W)_L$ defined in Section \ref{sec-MSWR}.

\begin{itemize}
\item $\RE/\RF$ is unramified, $m=2n-1$. In this case, as in the proof of Theorem \ref{U-des}, for each $a\in \Ff_2^1$ define
\[
\barsigma_{s,a} = (-1)^n R^{{\sf U}_{2n}}_{\sfS_0, (-s,a)},
\]
where $\sfS_0\in\CT({\sf U}_{2n})$, $\sfS_0^F\cong \Ff_{2n-2}^1\times \Ff_2^1$ is as in Theorem \ref{U-des} (2). Define
\[
\sigma_{s,a} = \textrm{c-}\Ind^H_{H_y}(\barsigma_{s,a}\otimes \xi_\mu^{-1}). 
\]
\item $\RE/\RF$ is ramified, $m=2n$. In this case, define
\[
\bar\sigma_s= (-1)^n R^{{\sf Sp}_{2n}}_{\sfS_0, -s},
\]
where $\sfS_0\in\CT({\sf Sp}_{2n})$, $\sfS_0^F\cong \Ff_{2n}^1$ is as in Theorem \ref{SP-des} (2). Define
\[
\sigma_s = \textrm{c-}\Ind^H_{H_y}(\bar\sigma_s\otimes \xi_\mu^{-1}). 
\]
Recall that in this case $\xi_\mu = \mu^+\circ \det$, where $\mu^+$ is the character of $\RE^+$ defined at the end of Section \ref{sec-MSWR}.
\end{itemize}
Now we state and prove Theorem \ref{thm:DD} as follows, which is the main result on the depth zero local descent for unitary groups. 

\begin{thm} \label{thm:LD} Let the notations be as above. In particular $\tau :=\tau_s$ is an irreducible distinguished depth zero  supercuspidal representations of $\GL_m(\RE)$ as in Theorem \ref{thm-dist}, with $m\geq 2$. Then the following hold.
\begin{enumerate}
\item If $\RE/\RF$ is unramified, $m=2n-1$, then  $\sigma_{s, a}$ is a multiplicity free direct summand of $\CD_{n-1,\psi_\RF,\mu}(\pi_\tau)$ for each $a\in \Ff_2^1$.
\item If $\RE/\RF$ is ramified, $m=2n$, then
\[
\CD_{n,\psi_\RF,\mu}(\pi_\tau) = \sigma_s. 
\]
\end{enumerate}
\end{thm}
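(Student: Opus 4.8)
The plan is to deduce Theorem \ref{thm:LD} from its finite-field analogue — Theorem \ref{U-des}(2) in the unramified case, Theorem \ref{SP-des}(2) in the ramified case — by a minimal $K$-type comparison, and then to pin down the descent using the Soudry--Tanay structure theorem \ref{ST-thm}. Throughout I keep the notation of Sections \ref{sec-MSWR}--\ref{KTIR}: $G=\RU(V)=\RU_{2m}(\RF)$, $H=\RU(W)=\RU_{2n}(\RF)$, the hyperspecial points $x\in\CB(M)\subset\CB(G)$ and $y\in\CB(H)$ with reductive quotients ${\sf G}^F$, ${\sf H}^F$ (the latter ${\sf U}_{2n}$ or ${\sf Sp}_{2n}$ over $\Ff$ as in \eqref{fH}), $l_0=m-n$, and the depth-zero supercuspidals $\sigma_{s,a}=\cInd^H_{H_y}(\barsigma_{s,a}\otimes\xi_\mu^{-1})$, $\sigma_s=\cInd^H_{H_y}(\barsigma_s\otimes\xi_\mu^{-1})$ of \eqref{siga}, \eqref{sig}, each being irreducible supercuspidal since $\barsigma_{s,a}$, $\barsigma_s$ are inflated irreducible cuspidal representations of ${\sf H}^F$.

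The key step is a compatibility between the $p$-adic and finite twisted Jacquet constructions: \emph{the representation $\CD_{l_0,\psi_\RF,\mu}(\pi_\tau)$ has depth zero, and its space of $H_{y,0+}$-invariants, as an ${\sf H}^F$-module, contains the finite Fourier--Jacobi descent $\CD^{\mathrm{fin}}_{l_0}(\pi_{\rm ss})$ of the ${\sf G}^F$-module $\pi_{\rm ss}$ of Proposition \ref{K-type}, twisted by $\xi_\mu^{-1}$} (and in fact equals it when $\CD_{l_0,\psi_\RF,\mu}(\pi_\tau)$ is irreducible). To prove this I would take $H_{y,0+}$-invariants in the twisted Jacquet module defining $\CD_{l_0,\psi_\RF,\mu}$, use the auxiliary point $z\in\CB(M)$ from the proof of Proposition \ref{K-type} together with Moy--Prasad theory (\cite{MP}, as in that proof) to identify them with the finite twisted Jacquet module along ${\sf N}_{l_0}^F$, starting from the identity $\pi_\tau^{G_{x,0+}}\cong\pi_{\rm ss}$ of Proposition \ref{K-type}. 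Two inputs control the twisting datum: the generic character $\psi_{l_0}$ on $(Z_{l_0})_x$ reduces to the corresponding finite generic character because $\psi_\RF$ has conductor $\Fp_\RF$; and — the crucial point — the restriction of $\omega_{2n,\psi_\RF,\mu}$ to $H_y=\RU(W)_L$ reduces, by Proposition \ref{gen-latt} and Corollary \ref{weil-type}, to the finite Weil representation $\bar\omega_{\psi_\Ff}$ of ${\sf H}^F$ twisted by $\xi_\mu'\circ\det$. Passing to contragredients in the definition of $\CD$ (see Remark \ref{ST-rmk}) replaces $\psi$ by $\psi^{-1}$ throughout and introduces the inverse twist.

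Granting the compatibility, I would finish as follows. By Theorem \ref{U-des}(2), resp. Theorem \ref{SP-des}(2) — available since $n\geq2$, and with $s$ regular so that $s\neq\pm1$ — one has $\CD^{\mathrm{fin}}_{l_0}(\pi_{\rm ss})\cong\bigoplus_{a\in\Ff_2^1}\barsigma_{s,a}$ in the unramified case and $\CD^{\mathrm{fin}}_{l_0}(\pi_{\rm ss})\cong\barsigma_s$ in the ramified case, where $\barsigma_{s,a}=(-1)^nR^{{\sf U}_{2n}}_{\sfS_0,(-s,a)}$ and $\barsigma_s=(-1)^nR^{{\sf Sp}_{2n}}_{\sfS_0,-s}$ are irreducible cuspidal ($\sfS_0$ anisotropic, the character regular). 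Combining with the compatibility statement and using that $\xi_\mu|_{H_y}$ factors as $\xi_\mu'\circ\det$, the inflated irreducible cuspidal $\barsigma_{s,a}\otimes\xi_\mu^{-1}$ (resp. $\barsigma_s\otimes\xi_\mu^{-1}$) occurs in $\CD_{l_0,\psi_\RF,\mu}(\pi_\tau)|_{H_y}$; Frobenius reciprocity then produces a nonzero, hence injective, $H$-map $\sigma_{s,a}\hookrightarrow\CD_{l_0,\psi_\RF,\mu}(\pi_\tau)$ (resp. $\sigma_s\hookrightarrow\CD_{l_0,\psi_\RF,\mu}(\pi_\tau)$). In the ramified case, Theorem \ref{ST-thm}(2) says $\CD_{n,\psi_\RF,\mu}(\pi_\tau)$ is irreducible, so this injection is an isomorphism, giving $\CD_{n,\psi_\RF,\mu}(\pi_\tau)=\sigma_s$. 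In the unramified case, Theorem \ref{ST-thm}(1) says $\CD_{n-1,\psi_\RF,\mu}(\pi_\tau)$ is a multiplicity-free direct sum of irreducible supercuspidals, so $\sigma_{s,a}$ is isomorphic to one of its summands and occurs in it with multiplicity one, for each $a\in\Ff_2^1$; this is the assertion of part (1).

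I expect the main obstacle to be the compatibility statement of the second paragraph: making the passage from the $p$-adic twisted Jacquet functor to the finite one rigorous requires controlling the Moy--Prasad filtrations of $N_{l_0}$ and $Z_{l_0}$ at $x$ and $z$ simultaneously, and deploying the Schr\"odinger-versus-lattice model comparison of Section \ref{sec-MSWR} inside a twisted Jacquet module rather than merely on the maximal compact subgroup. The remaining points — tracking the signs $(-1)^n$, the twist by $\xi_\mu$, and the $N_{\RE/\RF}$-twist ambiguity of the Whittaker datum noted in Remark \ref{ST-rmk} — should be routine bookkeeping.
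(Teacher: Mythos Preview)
Your overall architecture is right, and the endgame (Frobenius reciprocity plus Theorem~\ref{ST-thm}) matches the paper. The gap is exactly where you flag it: the ``compatibility statement'' that $\CD_{l_0,\psi_\RF,\mu}(\pi_\tau)^{H_{y,0+}}$ contains $\CD^{\rm fin}_{l_0}(\pi_{\rm ss})\otimes\xi_\mu^{-1}$ is not something Moy--Prasad theory gives you directly. The result \cite[Proposition~6.7]{MP} used in Proposition~\ref{K-type} concerns \emph{ordinary} Jacquet modules; here you are coinvariants against a non-compact unipotent group twisted by a generic character \emph{and} by a Weil representation, and there is no off-the-shelf statement saying that $H_{y,0+}$-invariants of this object compute the finite twisted Jacquet module of the minimal $K$-type. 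Trying to prove that directly is essentially the whole problem.

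The paper circumvents this by working not with $\pi_\tau$ but with the full induced module $\rho_{\tau,\sfs}$, and by importing from \cite{ST} the open $(P,R_{l_0})$-double coset $\CO=P\gamma R_{l_0}$: \cite[Proposition~2.1]{ST} gives $\CD_{l_0,\psi_\RF,\mu}(\rho_{\tau,\sfs})\cong\CD_{l_0,\psi_\RF,\mu}(\CS(\CO,\tau_\sfs))$, and the right-hand side is a compact induction $\textrm{c-}\Ind^{R_{l_0}}_{P^\gamma}\tau_{\sfs'}^\gamma$ from an explicit subgroup. One then drops further to the compact open $P^\gamma_{[x]}:=\gamma^{-1}P_{[x]}\gamma\cap H_yN_{l_0}$, whose reductive quotient is $({\sf P}^\gamma)^F$. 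A chain of Frobenius reciprocity steps, with Corollary~\ref{weil-type} supplying the map $\omega_{\psi_\RF,\mu}\to\bar\omega_{\psi_\RF}\otimes\xi_\mu$ on $H_y$, lands you in
\[
\Hom_{P^\gamma_{[x]}}\big(\bar\tau^\gamma_{\sfs'}\otimes\psi_{l_0}^{-1}\otimes\bar\omega_{\psi_\RF}^\vee,\ \bar\sigma\big),
\]
which is the finite-field Hom space (one-dimensional by Theorem~\ref{U-des}(1) resp.\ \ref{SP-des}(1)). This produces a concrete nonzero $\varphi\in\Hom_H(\CD_{l_0,\psi_\RF,\mu}(\rho_{\tau,\sfs}),\sigma)$.

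There is one more wrinkle you do not address: $\pi_\tau$ is a \emph{quotient} of $\rho_{\tau,1}$, so a map out of $\rho_{\tau,1}$ need not descend to $\pi_\tau$. The paper avoids this by taking $\sfs=-1$, where $\pi_\tau\hookrightarrow\rho_{\tau,-1}$ is a \emph{sub}; then one only needs that $\varphi$ does not vanish on the $\pi_{\rm ss}$-summand of $\rho_{\tau,-1}^{G_{x,0+}}\cong\pi_\reg\oplus\pi_{\rm ss}$ (Proposition~\ref{K-type}), and this is exactly what Theorem~\ref{U-des}(2) resp.\ \ref{SP-des}(2) supplies. Your argument, working with $\pi_\tau$ throughout, implicitly assumes this step away.
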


\begin{proof}
We  shall only give the proof of (1), and the proof of (2) is similar. By the uniqueness of Fourier-Jacobi models or the more specific Theorem \ref{ST-thm}, it suffices to show that 
\begin{equation} \label{main-id}
\Hom_H(\CD_{n-1,\psi_\RF,\mu}(\pi_\tau), \sigma_{s,a}) = \Hom_{R_{n-1}}(\pi_\tau, \sigma_{s,a}\otimes\nu_{n-1,\psi_\RF, \mu})\neq 0.
\end{equation} 
We refer to Section \ref{ssec-LDGGP}  for the definition of the subgroup $R_{n-1}=H\ltimes N_{n-1}$ of $G$ and its representation 
\[
\nu_{n-1, \psi_\RF, \mu} = \psi_{n-1}\otimes \omega_{2n, \psi_\RF, \mu}.
\]
In particular, $\omega_{\psi_\RF, \mu} = \omega_{2n, \psi_\RF, \mu}$ is the Weil representation of $H\ltimes \BH_W$, and the character $\psi_{n-1}$ of $Z_{n-1}\subset \GL_{n-1}(\RE)$ is of depth zero defined using $\psi_\RF$.

Following \cite{ST}, we have the  open  $(P, R_{n-1})$-double coset in $G$,
\[
\CO := P\gamma R_{n-1} = P \gamma H N_{n-1},
\]
where
\[
\gamma := \begin{pmatrix} 
0 & I_n & 0 &  0 \\
0 & 0 & 0 & -I_{n-1} \\
I_{n-1} & 0 & 0 & 0  \\
0 & 0 & I_n & 0
\end{pmatrix}.
\]
Define a subspace of $\rho_{\tau, \sfs}$,
\[
\CS(\CO, \tau_{\sfs}) := \Set{ f\in \rho_{\tau, \sfs} | {\rm supp}(f)\subset\CO}.
\]
By \cite[Proposition 2.1]{ST}, we have a natural isomorphism 
\begin{equation} \label{iso}
\CD_{n-1,\psi_\RF,\mu}(\rho_{\tau,\sfs}) \cong \CD_{n-1,\psi_\RF, \mu}(\CS(\CO, \tau_{\sfs}))
\end{equation} 
of $H$-modules. 

Put $P^\gamma := \gamma^{-1}P\gamma \cap R_{n-1}$, so that there is a natural bijection 
\[
P\backslash P\gamma R_{n-1} \to P^\gamma\backslash  R_{n-1},\quad P\gamma g \mapsto P^\gamma g,\quad g\in  R_{n-1}.
\]
It follows that
\begin{equation} \label{SO}
\CS(\CO,\tau_{\sfs}) \cong \textrm{c-}\Ind^{R_{n-1}}_{P^\gamma} \tau_{\sfs'}^\gamma,
\end{equation} 
where $\tau_{\sfs'}^\gamma(p) = \tau_{\sfs'}(\gamma p\gamma^{-1})$, $p\in P^\gamma$.  Put
$
P_{[x]}= M_{[x]}N,
$
and define the open subgroup of $P^\gamma$,
\[
P^\gamma_{[x]}:= \gamma^{-1} P_{[x]}\gamma \cap H_y N_{n-1}. 
\]
Direct calculation shows that $P^\gamma_{[x]}$ is compact, with finite quotient ${(\sf P}^\gamma)^F$, where
\[
{\sf P}^\gamma := \gamma^{-1}{\sf P}\gamma \cap {\sf HN}. 
\]
Recall that $\tau_{\sfs} = \textrm{c-Ind}^M_{M_{[x]}}\bar{\tau}_{\sfs}$, where $\bar\tau_{\sfs}$ is given by 
\eqref{taubar}. Applying \eqref{iso}, \eqref{SO} and Frobenius reciprocity,  
\begin{equation} \label{main-int}
\begin{aligned}
& \Hom_H ( \CD_{n-1, \psi_\RF, \mu}(\rho_{\tau, {\sfs}}), \sigma_{s,a}) \\
\supset \ & \Hom_{H_y}\left(\CJ_{N_{n-1}} \left( \textrm{c-}\Ind^{H_y N_{n-1}}_{ P_{[x]}^\gamma} (\bar\tau^\gamma_{{\sfs}'} )\otimes  \nu_{n-1, \psi_\RF, \mu}^\vee\right),  \bar\sigma_{s,a} \otimes \xi_\mu^{-1}\right) \\
 = \ & \Hom_{H_y N_{n-1}}\left(\textrm{c-}\Ind^{H_y N_{n-1}}_{ P_{[x]}^\gamma} (\bar\tau^\gamma_{{\sfs}'} )\otimes  \nu_{n-1, \psi_\RF, \mu}^\vee,  \bar\sigma_{s,a}\otimes \xi_\mu^{-1}\right)  \\
  = \ & \Hom_{H_y N_{n-1}} \left(\textrm{c-}\Ind^{H_y N_{n-1}}_{ P_{[x]}^\gamma} (\bar\tau^\gamma_{{\sfs}'} )\otimes  \psi_{n-1}^{-1}\otimes  \omega_{\psi_\RF, \mu}^\vee,  \bar\sigma_{s,a}\otimes \xi_\mu^{-1}\right) \\
  \supset \ & \Hom_{P_{[x]}^\gamma} (\bar\tau_{{\sfs}'}^\gamma \otimes \psi_{n-1}^{-1} \otimes \bar\omega_{\psi_\RF}^\vee \otimes \xi_\mu^{-1}, \bar\sigma_{s,a}\otimes \xi_\mu^{-1}) \\
  = \ &  \Hom_{P_{[x]}^\gamma} (\bar\tau_{{\sfs}'}^\gamma \otimes \psi_{n-1}^{-1} \otimes \bar\omega_{\psi_\RF}^\vee, \bar\sigma_{s,a}),
\end{aligned}
\end{equation}
where the last inclusion is induced by the homomorphism $\phi$ in Corollary \ref{weil-type}. 

The verbatim adaptation of the calculation in \cite{ST} gives the finite field analog of \eqref{main-int} that
\begin{equation}  \label{ST-fin}
\Hom_{{\sf H}^F}\left(\CD_{n-1, \psi_\Ff}(\Ind^{\sf G}_{\sf P}(\bar\tau')), \bar\sigma_{s,a}\right) = \Hom_{({\sf P}^\gamma)^F} \left((\bar\tau')^\gamma \otimes \bar\psi_{n-1}^{-1} \otimes  \omega_{\psi_\Ff}^\vee, \bar\sigma_{s,a}\right),
\end{equation} 
where $\bar\psi_{n-1}$ denotes the reduction of $\psi_{n-1}$ to ${\sf Z}_{n-1}^F$ (cf. Section \ref{ssec-UG}). 
By Theorem \ref{U-des} (1),
\[
\Hom_{{\sf H}^F}\left(\CD_{n-1, \psi_\Ff}(\Ind^{\sf G}_{\sf P}(\bar\tau')), \bar\sigma_{s,a}\right)  = \Hom_{{\sf R}_{n-1}^F}\left( \Ind^{\sf G}_{\sf P}(\bar\tau'),  \bar\sigma_{s,a}\otimes \nu_{n-1, \psi_\Ff}\right)
\]
is one-dimensional, and we take a generator $\bar\varphi$.  Then $\bar\varphi$ through \eqref{ST-fin} inflates to a nonzero map in the last space in \eqref{main-int}, which thereby induces a nonzero map
\[
\varphi \in \Hom_H ( \CD_{n-1, \psi_\RF, \mu}(\rho_{\tau, {\sfs}}), \sigma_{s,a}) = \Hom_{R_{n-1}} (\rho_{\tau, {\sfs}},\sigma_{s,a}\otimes \nu_{n-1,\psi_\RF, \mu}).
\]
Recall from Proposition \ref{K-type} that 
\[
\rho_{\tau, {\sfs}}^{G_{x, 0+}}\cong  \Ind^{\sf G}_{\sf P}(\bar\tau') \cong \pi_\reg \oplus \pi_{\rm ss},\quad \pi_\tau^{G_{x,0+}}\cong \pi_{\rm ss}.
\]  
By unfolding the Frobenius reciprocity, we see that 
$
\varphi\vert_{\rho_{\tau, {\sfs}}^{G_{x, 0+}}}
$
is the inflation of $\bar\varphi$. Theorem \ref{U-des} (2) implies that $\bar\varphi\vert_{\pi_{\rm ss}}\neq 0$, and therefore 
\[
\varphi|_{\pi_{\rm ss}}\neq 0.
\]
Taking ${\sfs}=-1$, it follows that the composition 
\[
\pi_\tau \hookrightarrow  \rho_{\tau, -1} \xrightarrow{\varphi}  \sigma_{s,a}\otimes \nu_{n-1,\psi_\RF, \mu}
\] 
gives a nonzero map in $\Hom_{R_{n-1}}(\pi_\tau, \sigma_{s,a}\otimes \nu_{n-1,\psi_\RF, \mu})$, hence \eqref{main-id} holds.
\end{proof}

\end{document}